\newtheorem{thm}{Theorem}[section]
\newtheorem{cor}[thm]{Corollary}
\newtheorem{lemma}[thm]{Lemma}
\newtheorem{prop}[thm]{Proposition}
\theoremstyle{definition}
\newtheorem{defn}[thm]{Definition}
\theoremstyle{remark}
\newtheorem{ex}[thm]{Example}
\newcommand\jh[1]{{#1}}
\def\M{{\mathcal M}_{N,K}}
\def\Rx{R^{\alpha}_{x}}
\def\fam_Uabg{\Omega}
\def\Uch{W\Phi_{ch}^{\a,\b}}
\def\Uch{\Phi_{ch}^{\a,\b}}
\def\Usum{\Phi_{sum}^\eta}
\def\wUsum{\widehat{\Phi}_{sum}^\eta}
\def\Udiag{\Phi_{diag}^\delta}
\def\wUdiag{{\widehat\Phi}_{diag}^\delta}
\def\wUch{\widehat{\Phi}_{ch}}
\def\Umain{\Phi^{\alpha, \beta, \delta, \eta}}
\newcommand\Up[1]{\Phi_{#1}}
\def\Lx{L^{\alpha, \beta}_{x}}
\def\ddt{\frac{d}{dt}}
\def\Snm1{\mathbb S^{n-1}}
\def\R{\mathbb R}
\def\C{\mathbb C}
\def\Zn{\mathbb Z_N}
\def\a{\alpha}
\def\b{\beta}
\def\gam{\gamma}
\def\grad{\nabla}
\newcommand\textem[1]{{\em #1}}
\begin{document}

\title{Frame potentials and the geometry of frames}
\thanks{This work was partially supported by NSF DMS-1109545
and by the Alexander von Humboldt 
Foundation.
}

\date{} 
\author{Bernhard G. Bodmann}
\address{651 Philip G. Hoffman Hall, Department of Mathematics, University of Houston, Houston, TX 77204-3008}

\author{John Haas}
\address{651 Philip G. Hoffman Hall, Department of Mathematics, University of Houston, Houston, TX 77204-3008}

\begin{abstract}
\jh{
This paper concerns the geometric structure of 
optimizers for frame potentials. We consider
finite, real or complex frames and rotation or unitarily invariant potentials,
and mostly specialize to Parseval frames,
meaning the frame potential to be optimized is a function
on the manifold of Gram matrices belonging
to finite Parseval frames.
Next to the known classes of equal-norm and equiangular Parseval frames, 
we introduce equidistributed Parseval frames, which are more general than the equiangular type but have more structure than equal-norm ones. We also provide examples where
this class coincides with that of Grassmannian frames,
the minimizers for the maximal magnitude among inner products between
frame vectors.}
These different types of frames are characterized in relation to the optimization
of frame potentials.
Based on results by {\L}ojasiewicz,
we show that the gradient descent for a real analytic frame potential
on the manifold of Gram matrices belonging to Parseval frames
always converges to a critical point. We then
derive geometric structures associated with the critical points of different choices of
frame potentials. The optimal frames for families of such potentials 
are thus shown to be equal-norm, or additionally equipartitioned, or even equidistributed.

\end{abstract}

\maketitle

\section{Introduction}

The frame-based expansion of vectors in Hilbert spaces has
become an increasingly popular tool in many areas of mathematics \cite{FINITEFRAMESBOOK},
science and engineering \cite{KOVACEVICCHEBIRA}. Frames have many properties
comparable to orthonormal bases, but are not required to form linearly independent sets, and therefore offer more flexibility to accommodate specific design requirements. 
Since the early days of frame theory \cite{DuffinSchaffer}, structured frames 
have been given special consideration. The structure can be of an algebraic
nature, for example when the frame is constructed with the help of 
group representations \cite{DuffinSchaffer,HANLARSON,Chr03,Heil11}, or it can be present in the form of geometric conditions on the frame vectors. 
Such geometric conditions often result from frame design problems, for example
when
frames are used as analog codes for erasures \cite{CasKov03,HP04,BodPau05,Kal06}, 
or for beam forming in electrical engineering \cite{Heath04,HEATH},
or for quantum state tomography \cite{Zau, RBKSC04}. Equiangular tight frames are optimal for many of these applications. Such frames 
are most similar in character to orthonormal bases in that they
provide simple expansions for vectors, the norms of all the frame vectors are identical and the 
inner products between any two frame vectors have the same magnitude. 
The optimality can be expressed conveniently in terms of a so-called frame potential.

\jh{It is appealing that optimization principles have such a simple geometric
consequence. The characterization of equiangular tight frames was the motivation
to implement numerical searches for equiangular Parseval frames via the
minimization of frame potentials. However, it is known that
equiangular tight frames do not exist for all numbers of frame vectors and dimensions of the Hilbert space \cite{STDH07, welch:bound}.  } Thus, one is left with unanswered questions: Is there a more general structure for Parseval frames that includes equiangular ones as a special case and characterizes
optimizers for certain  frame potentials?
Moreover, is there a program for the optimization of these frame potentials
which is guaranteed to converge?
The application-oriented optimality principles lead from the special case of 
equiangular tight frames to the class
of Grassmannian frames. These minimize the maximum inner product 
among frame vectors subject to certain constraints \cite{HS03}. 
However, there was no clear indication whether these frames
exhibit special structures that might help with their design \cite{HP04}.

The present work was driven by the question whether there is a larger class
of structured frames that includes the equiangular tight case, 
which provides more examples
of optimal frames. In order to study a large class of optimization problems, we have
considered many types of frame potentials. Frame design by optimization of such
potentials is often done by a gradient descent procedure, so the classification
of critical points for the frame potentials is fundamental for the understanding of optimizers.

The results in this paper concern the relationship between the choice of frame potentials, real analytic functions  on the manifold 
of Gram matrices belonging to Parseval frames, and the geometric character of optimizers. 
The structure of this paper is as follows:
We first review and slightly extend results that characterize
the structure of frames which meet certain bounds for frame potentials.

Apart from well-studied types of frames, such as equal-norm Parseval frames and
equiangular Parseval frames, we introduce a new variant, which we call equidistributed Parseval frames.
We demonstrate that this type exhibits many examples for any finite number of frame vectors and the Hilbert spaces
they span. We also provide examples where this type of structure coincides with that of Grassmannian frames, the
minimizers for the maximal magnitude appearing among all inner products between pairs of frame vectors.

We then show that a result of Lojasciewicz implies that, for
any real analytic frame potential, the gradient descent converges 
to a critical point. In addition, the
sublevel sets accessible to the gradient descent with a sufficiently small
initial potential rules out the convergence of the gradient descent to 
the orthodecomposable case.

What remains is to classify critical points. 
Central to our main result is the development of a four parameter family of analytic frame potentials, $\{ \Umain = \Usum + \Udiag + \Uch \}_{\alpha, \beta, \delta, \eta \in (0,\infty)}$, each member of which is a sum of the nonnegative potentials $\Usum$, $\Udiag$, and $\Uch$ (see Definitions \ref{defUsum}, \ref{defUdiag}, \ref{defUch} and \ref{defUmain}).  Furthermore, each $\Umain$ is at most quadratic in the elementary one parameter potential function, $E_{x,y}^{\eta} $ (see Definition~\ref{defExy}).
The parameters $\a, \b,$ and $\delta$ induce weights that determine the proportionality of how the diagonal versus the off-diagonal entries of $G$ contribute to the potential value.  

Some results based on manifolds of equal-norm frames identify undesirable
critical points for frame potentials, so-called orthodecomposable frames \cite{CFM12,Strawn2012}, see also \cite{Strawn:mfdstruc}.
In Section~\ref{sectionusum}, we show that whenever the value of $\Usum(G)$ is sufficiently low, then $G$ cannot contain zero entries, thereby ruling out the orthodecomposable case.  
In Section~\ref{sectionudiag}, we see that whenever $G$ contains no zero entries and $\nabla \Udiag(G) = 0$ for all $\delta$ in a positive open interval, then $G$ is equal norm.  In Section~\ref{sectionuch}, we show that whenever $G$ contains no zero entries, it is equal-norm, and $\nabla \Uch(G) = 0$ for all $\a, \b$ in positive open intervals, then $G$ must be what we call equidistributed.  

Combining these results in Section~\ref{sectionumain} leads to 
a theorem which states that whenever the value $\Umain(G)$ is sufficiently low and  $\nabla \Umain(G) = 0$ for all $\a, \b, \delta$ in positive open intervals, then $G$ is equidistributed.  This is followed by Theorem~\ref{thmcharnozeros}, where we provide a characterization of equidistributed frames which do not exhibit orthogonality between any of the frame vectors.  

\jh{Finally, in Section~\ref{sectiongrass}, we show how a limiting procedure can give rise to Grassmannian frames with desirable structures.}

\section{Preliminaries}   

\subsection{Elementary properties of frames}

\begin{defn} \label{defframe}
A family of vectors $\mathcal{F}=\{ f_j \}_{j \in J}$ is a \textem{frame} for a real or complex Hilbert space $\mathcal{H}$ if there are constants $0 < A \leq B < \infty$ such that for all $x \in \mathcal{H}$,
$$
A \| x \|^2 \leq \sum\limits_{j \in J} | \langle x, f_j \rangle |^2 \leq B \| x \|^2 \, .
$$
We refer to the largest such $A$ and the smallest such $B$ as the \textem{lower} and \textem{upper frame bounds}, respectively.  In the case that $A=B$, we call $\mathcal{F}$ a \textem{tight frame}, and whenever $A=B=1$, then $\mathcal{F}$ is a \textem{Parseval frame}.  If $\|f_j\| = \| f_l \|$ for all $j,l \in J$, then $\mathcal{F}$ is an \textem{equal norm frame}.  If $\mathcal{F}$ is a an equal-norm frame and there exists a $C \ge 0$ such that  $| \langle f_j, f_l \rangle | = C$ for all $j, l \in J$ with $j \neq l$, then we say $\mathcal{F}$ is \textem{equiangular}.  The \textem{analysis operator} of the frame is the map $V: \mathcal{H} \rightarrow l^2 (J)$ given by $(Vx)_j = \langle x, f_j \rangle$.  Its adjoint, $V^*$, is the \textem{synthesis operator}, which maps $a \in l^2 (J)$ to $V^* (a) = \sum\limits_{j \in J} a_j f_j.$  The \textem{frame operator} is the positive, self-adjoint invertible operator $S = V^* V$ on $\mathcal{H}$ and the \textem{Gramian} is the operator $G=V V^*$ on $\ell^2(J)$. 
\end{defn}

In this paper, we focus on the case that $\mathcal{H} = \mathbb F^K$,
where $\mathbb F= \mathbb C$ or $\mathbb R$, $K$ is a positive integer, and always choose the canonical sesquilinear
inner product.   Thus, $K$ always denotes the dimension of $\mathcal{H}$ over the field $\mathbb F$.  Furthermore, we restrict ourselves to finite frames indexed by $J = \Zn$, where $N\ge K$, and reserve the letter $N$ to refer to the number of frame vectors in the frame(s) under consideration.  When the group structure of ${\mathbb Z}_N$ is not important, we also number the frame vectors with $\{1,2,\dots, N\}$, with the tacit understanding that $N \equiv 0 \,(\mbox{mod}\, N)$. 
Since this paper is mostly concerned with finite Parseval frames, we
call a Parseval frame for $\mathbb F^{K}$ consisting of $N$ vectors an 
\textem{$(N,K)$-frame}. 

If $\mathcal{F}$ is an $(N,K)$-frame, then $V$ is an isometry, since
$
\| Vx \|_2^2
=
\sum_{j =1}^N | \langle x, f_j \rangle|^2
= \| x \|^2
$
holds for all $x \in {\mathbb F}^{K}$. Hence, 
we obtain the reconstruction identity
$x=\sum_{j=1}^N \langle x, f_j \rangle f_j$, or in terms of the analysis and synthesis operators,
$x=V^*Vx$.
In this case, we also have that the the Gramian $G = VV^*$ is a rank-$K$ orthogonal projection,
because $G^* G=V V^* V V^*= V V^* = G$ and the rank of $G$ equals the trace, $tr(G) = K$. 

Many geometric
properties of frames discussed in this paper only depend on the inner products between frame vectors
and on their norms, which are collected in the Gramian. This means, most of the results presented hereafter
refer to equivalence classes of frames.

\begin{defn} Two frames $\mathcal F = \{f_j \}_{j \in J}$ and
$\mathcal F' = \{f'_j\}_{j \in J}$ for a real or complex Hilbert space $\mathcal H$
are called \emph{unitarily equivalent} if there exists an orthogonal or unitary operator $U$ on $\mathcal H$, respectively,
such that $f_j = U f'_j$ for all $j \in J$.
\end{defn}
 
Each equivalence class of frames is characterized 
by the corresponding Gram matrix.

\begin{prop}
The Gramians of two frames $\mathcal F = \{f_j \}_{j \in J}$ and
$\mathcal F' = \{f'_j\}_{j \in J}$ for a finite dimensional real or complex Hilbert space $\mathcal H$
are identical if and only if the frames are unitarily equivalent. 
\end{prop}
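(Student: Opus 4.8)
The plan is to treat the two implications separately, with essentially all of the content residing in the reverse direction. For the forward implication, suppose $\mathcal F$ and $\mathcal F'$ are unitarily equivalent, say $f_j = U f'_j$ for an orthogonal or unitary operator $U$ on $\mathcal H$. Since such a $U$ preserves the inner product, I would simply compute $\langle f_j, f_l \rangle = \langle U f'_j, U f'_l \rangle = \langle f'_j, f'_l \rangle$ for all $j,l \in J$, which says precisely that the two Gram matrices coincide.

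For the reverse implication, assume the Gramians agree, i.e.\ $\langle f_j, f_l \rangle = \langle f'_j, f'_l \rangle$ for all $j,l \in J$. The first thing I would record is that a frame must span $\mathcal H$: if some nonzero $x$ were orthogonal to every $f_j$, then $\sum_j |\langle x, f_j\rangle|^2 = 0$ would violate the lower frame bound, so $\{f_j\}$ and likewise $\{f'_j\}$ are spanning. This ensures that an operator sending $f'_j$ to $f_j$, should it be consistent, is automatically defined on all of $\mathcal H$ and is surjective. I would then attempt to define $U$ by setting $U\big(\sum_j a_j f'_j\big) = \sum_j a_j f_j$ on finite linear combinations and extending by linearity; in operator language, this is the assertion that there is a well-defined $U$ with $V^* = U (V')^*$, where $V^*$ and $(V')^*$ are the synthesis operators.

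The crux is well-definedness, which is exactly where equality of the Gramians enters. For two coefficient vectors representing the same element, writing $c_j$ for their difference, the identity $\|\sum_j c_j f_j\|^2 = \sum_{j,l} c_j \overline{c_l} \langle f_j, f_l \rangle = \sum_{j,l} c_j \overline{c_l} \langle f'_j, f'_l \rangle = \|\sum_j c_j f'_j\|^2$ shows that $\sum_j c_j f'_j = 0$ forces $\sum_j c_j f_j = 0$. Equivalently, the null spaces of $V^*$ and $(V')^*$ coincide, both being the null space of the common Gramian $G = V V^* = V'(V')^*$, since $\|V^* a\|^2 = \langle a, G a\rangle$. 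The very same computation applied to arbitrary coefficients yields $\langle U x, U y \rangle = \langle x, y\rangle$, so $U$ is an isometry; being a surjective isometry on the finite dimensional space $\mathcal H$, it is orthogonal (respectively unitary). Since $U f'_j = f_j$ by construction, the frames are unitarily equivalent. The only genuine obstacle is this well-definedness step; the spanning property and the passage from isometry to unitarity are routine, and the forward direction is immediate.
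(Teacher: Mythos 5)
Your proof is correct, but it takes a different route from the paper's. The paper argues at the level of operators: writing $G = VV^* = V'(V')^*$, it invokes the polar decomposition to factor both analysis operators as $V = G^{1/2}U$ and $V' = G^{1/2}U'$ with isometries $U, U'$, observes that the frame property forces the ranges of $U$, $U'$, and $G$ to coincide so that $Q = U^*U'$ is unitary, and then reads off $f_j = Qf'_j$ from $V^* = Q(V')^*$. You instead construct the intertwining operator directly on the frame vectors, $U\bigl(\sum_j a_j f'_j\bigr) = \sum_j a_j f_j$, and verify well-definedness and the isometry property by expanding norms bilinearly and substituting one Gramian for the other; surjectivity comes from the spanning property, and finite dimensionality upgrades the surjective isometry to a unitary. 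Your approach is more elementary and self-contained (no polar decomposition needed) and it isolates exactly where the hypothesis enters, namely in the well-definedness step, which plays the role of the paper's coincidence-of-ranges argument. What the paper's operator-theoretic route buys is compactness and robustness: the polar decomposition packages boundedness and the kernel/range bookkeeping automatically, so the same argument transfers with no change to infinite-dimensional $\mathcal H$, whereas your construction via finite linear combinations leans on the finite dimensionality of $\mathcal H$ to avoid any discussion of limits and continuity. Within the stated hypotheses (finite-dimensional $\mathcal H$), both proofs are complete.
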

\begin{proof}
Assuming $G$ is the Gramian for the frame $\mathcal F$ as well 
as for the frame $\mathcal F'$, then $G=VV^*=V'(V')^*$, where
$V$ and $V'$ are the analysis operators belonging to $\mathcal F$
and $\mathcal F'$, respectively. By the polar decomposition,
$V=(V V^*)^{1/2} U = G^{1/2} U$ and $V'=(V' (V')^*)^{1/2} U'=G^{1/2} U'$
with isometries $U$ and $U'$ from $\mathcal H$ to $\ell^2(J)$, thus 
$V^*= U^* U'(V')^*$. By the frame property, the range of $U$ is identical to that of $U'$
and that of $G$, so $Q=U^* U'$ is unitary, which shows that $V^* e_j= Q (V')^* e_j$
for each canonical basis vector $e_j$ in $\ell^2(J)$, or equivalently, $f_j = Q f'_j $
for all $j \in J$.
Conversely, if $\mathcal F$ and $\mathcal F'$ are unitarily equivalent,
then it follows directly that the Gramians of both frames are identical. 
\end{proof}

 Special
emphasis is given to the Gram matrices of Parseval frames.
By the spectral theorem and the condition
$G^2=G$ this set is precisely the set of
rank-$K$ orthogonal projections, the
Gram matrices 
of Parseval frames for $\mathbb F^K$.

\begin{defn}
We define for $\mathbb F = \mathbb R$ or $\mathbb C$
$$\M = \{ G \in \mathbb F^{N \times N}: 
G = G^2= G^*, tr(G)=K \} \, .$$
\end{defn}

This subset of the $N \times N$ Hermitians is, in fact, a real analytic submanifold (see Appendix~\ref{app:Mrealanalytic} for a proof of this statement).

\subsection{Equidistributed frames}

A type of frame that emerged in our study of frame potentials is what we call equidistributed.
These frames include many structured frames that have already appeared in the literature: equiangular Parseval frames,
mutually unbiased bases, and group frames.

\begin{defn}\label{defequidist}
Let $\mathcal{F} = \{ f_j \}_{j=1}^N$ be an $(N, K)$-frame and let $G$ be its Gramian. 
The frame $\mathcal{F}$ is called \emph{equidistributed} if for each pair
$p, q \in {\mathbb Z}_N$, there exists a permutation $\pi$ on ${\mathbb Z}_N$
such that $|G_{j,p} | = | G_{\pi(j),q} |$ for all $j \in {\mathbb Z}_N$.
In this case, we also say that $G$ is equidistributed.
\end{defn}

In other words, $\mathcal{F}$ is equidistributed  if and only if 
the magnitudes in any column of the Gram matrix repeat in any other column,
up to a permutation of their position.
For Parseval frames, equidistribution implies that all frame vectors have the same norm.

\begin{prop}
If $\mathcal{F}$ is an equidistributed $(N,K)$-frame, then
$\|f_j\|^2 = K/N$ for each $j \in {\mathbb Z}_N$.
\end{prop}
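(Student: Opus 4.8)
The plan is to use the one structural property we have not yet exploited, namely that the Gramian $G$ of a Parseval frame is a rank-$K$ orthogonal projection, so that $G = G^2 = G^*$. This identity is exactly what converts the combinatorial hypothesis of equidistribution into a statement about the diagonal entries $G_{p,p} = \norm{f_p}^2$, which are precisely the quantities we wish to show are constant.

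First I would record that, since $G$ is Hermitian, $G_{p,j} = \overline{G_{j,p}}$, so that the $(p,p)$ entry of $G^2$ is $\sum_{j \in \Zn} G_{p,j} G_{j,p} = \sum_{j \in \Zn} \abs{G_{j,p}}^2$. Combining this with the idempotency $G = G^2$ produces the key formula $\norm{f_p}^2 = G_{p,p} = \sum_{j \in \Zn} \abs{G_{j,p}}^2$; that is, the squared norm of $f_p$ equals the sum of the squared magnitudes appearing in the $p$-th column of $G$.

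Now the hypothesis does the work. For any pair $p, q \in \Zn$, equidistribution supplies a permutation $\pi$ with $\abs{G_{j,p}} = \abs{G_{\pi(j),q}}$ for all $j$, so the two columns carry the same multiset of magnitudes. Since reindexing by $\pi$ leaves a finite sum unchanged, $\sum_{j} \abs{G_{j,p}}^2 = \sum_{j} \abs{G_{j,q}}^2$, and hence $\norm{f_p}^2 = \norm{f_q}^2$ for every pair $p,q$. Thus all squared norms share a common value $c$, and summing over the diagonal while using $\mathrm{tr}(G) = K$ gives $Nc = \sum_{p} G_{p,p} = K$, whence $c = K/N$, as claimed.

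There is no serious obstacle here; the only point requiring care is the first step, where the projection identity $G = G^2$ is essential. Equidistribution by itself constrains only the column magnitudes, and without relating those magnitudes back to the diagonal through $G = G^2$ there would be no mechanism to pin down the individual values $G_{p,p}$.
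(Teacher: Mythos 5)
Your proof is correct and follows essentially the same route as the paper's: both rest on the identity $\|f_p\|^2 = \sum_{j}|G_{j,p}|^2$, then use equidistribution to equate column sums and the trace condition $\mathrm{tr}(G)=K$ to fix the common value at $K/N$. The only cosmetic difference is that you extract the key identity from the diagonal of $G=G^2=G^*$, whereas the paper invokes the Parseval identity $\|f_p\|^2=\sum_j|\langle f_p,f_j\rangle|^2$ directly---these are the same fact in two guises, since the Parseval property is exactly what makes $G$ an orthogonal projection.
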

\begin{proof}
By assumption, for each $p \in {\mathbb Z}_N$ there exists $\pi$
such that $|G_{j,p}| = |G_{\pi(j),1}|$ holds for the entries of the 
associated Gram matrix $G$
for all $j \in {\mathbb Z}_N$ and thus by the Parseval identity
$$
   \|f_p\|^2 = 
      \sum_{j=1}^N |\langle f_p, f_j \rangle|^2 = 
   \sum_{j=1}^N |G_{j,p} |^2
   = \sum_{j=1}^N |G_{\pi(j),1}|^2 = \|f_1\|^2 \, . 
$$
The trace condition $\sum_{j=1}^N G_{j,j} = \sum_{j=1}^N \|f_j\|^2 = K$
for the Gram matrices of Parseval frames
then implies that each vector has the claimed norm.   
\end{proof}

Below are a few examples to illustrate our definition.
To begin with, any equiangular Parseval frame is equidistributed.

\begin{ex}\label{exequiangular}
 Equiangular Parseval frames.
Let $G$ be the Gram matrix of an equiangular $(N,K)$-frame.  Since the magnitudes of the entries of any column of $G$ consist of $N-1$ instances of $C_{N,K}$ and one instance of $\frac{K}{N}$, $G$ is equidistributed.  \end{ex}

A class of frames with close similarities to equiangular Parseval frames is called Mutually Unbiased Bases \cite{MUBs-Schwing, MUBs-Ivan}. We show that
a slightly more
general class, \textem{Mutually Unbiased Basic Sequences}, is equidistributed. In this case, the frame vectors
are a collection of orthonormal sequences that are mutually unbiased. To include Parseval frames, we allow for an overall
rescaling of the norms.

\begin{ex}  Mutually Unbiased Basic Sequences. \label{MUBS}\label{ex46hankel} 
Let $N=ML$ and $G$ be such that the matrix $Q$ whose entries are $Q_{j,l} = |G_{j,l}|$
is the sum of Kronecker products of the form $Q = b I_M \otimes I_L + c (J_M-I_M)\otimes J_L$,
where $b>0$, $c \ge 0$, and the matrices $I_M$ and $I_L$ are the $M\times M$ and $L \times L$ identity matrices, and
$J_M$ and $J_L$ are the matrices of corresponding size whose entries are all $1$.  Each row of $G$ 
has one entry of magnitude $b$, $L-1$ vanishing entries and $(M-1)L$ entries of magnitude $c$, so $G$ 
is equidistributed. We also provide a concrete nontrivial example of such a $(6,4)$-frame
with $M=3$ and $L=2$.

Let $\omega = e^{2 \pi i/8}$, a primitive $8$-th root of unity, 
$\lambda=\sqrt{\frac{1}{18}}$ and let 
$$
G
=
\left(
\begin{array}{cccccc}
\frac{2}{3} & 0 & \lambda & i \lambda & \lambda & \lambda \\
 0               & \frac{2}{3} & i \lambda & \lambda & -\lambda & \lambda \\
 \lambda & -i \lambda & \frac{2}{3} & 0 & \lambda \omega^5 & \lambda \omega^3 \\
 -i \lambda & \lambda & 0 & \frac{2}{3}  & \lambda \omega & \lambda \omega^3 \\
 \lambda & -\lambda & \lambda \omega^3 & \lambda \omega^7 & \frac{2}{3} & 0 \\
 \lambda & \lambda & \lambda \omega^5 & \lambda \omega^5 & 0 & \frac{2}{3}
\end{array}
\right).
$$
One can verify that $G=G^*=G^2$ and clearly $tr(G)=4$.  Thus, $G \in {\mathcal M}_{6,4}$.  Since the magnitudes of the entries of every column consist of one instance of $0$, one instance of $\frac{2}{3}$, and four instances of $\lambda$, it follows that $G$ is equidistributed.  \end{ex}

\begin{ex}\label{exharmonic} Group frames.
Let $\Gamma$ be a finite group of size $N=|\Gamma|$ and $\pi: \Gamma \to B(\mathcal H)$ be an orthogonal or unitary representation of $\Gamma$ 
on the real or complex  
$K$-dimensional Hilbert space $\mathcal H$, respectively. Consider the orbit $\mathcal F = \{f_g = \pi(g) f_e\}_{g \in \Gamma}$
generated by a vector $f_e$ of norm $\|f_e\| = \sqrt{K/N}$, indexed by the unit $e$ of the group. If
$\mathcal F $ is a Parseval frame $\mathcal F = \{f_g\}_{g \in \Gamma}$, then
$\mathcal F$ is equidistributed, because $\langle f_g , f_h\rangle = \langle \pi(h^{-1}g) f_e, f_e\rangle$ and
left multiplication by $h^{-1}$ acts as a permutation on the group elements. 

To have the Parseval property, it is sufficient if the representation is irreducible, but there are also examples where this is not the case,
such as the harmonic frames \cite{GVT98} which are obtained with the 
representation of the abelian group $({\mathbb Z}_N,+)$ 
on $\mathcal H$.


%

\end{ex}

\begin{ex} Tensor Products of Equidistributed Frames.  
Let $1 \leq K_1 < N_1$ and $1 \leq K_2 < N_2$ be integers, let $G_1 \in {\mathcal M}_{N_1, K_1}$ and $G_2 \in {\mathcal M}_{N_2, K_2}$ be equidistributed, and consider the Kronecker product $G = G_1 \otimes G_2$.  Then $G$ is an $N_1 N_2 \times N_1 N_2$ Hermitian matrix such that $G^2 = (G_1 \otimes G_2)^2 = G_1^2 \otimes G_2^2 = G_1 \otimes G_2 =G$, so it is an orthogonal projection.  Furthermore, $G_{j,j} = \frac{K_1 K_2}{N_1 N_2}$ for all $j \in {\mathbb Z}_{N_1 N_2}$, so $tr(G) = K_1 K_2$.  Therefore, $G \in {\mathcal M}_{N_1 N_2, K_1 K_2}$.  Now let $p, q \in {\mathbb Z}_{N_1 N_2}$ 
with $p=p_1 N_1 + p_2$ and $q=q_1 N_1 + q_2$, and
let $Q, Q_1,$ and $Q_2$ denote the matrices whose entries are the absolute values of the entries of $G, G_1,$ and $G_2$, respectively.  Since $G_1$ and $G_2$ are equidistributed, row $p$ of $Q$ is of the form
$$\rho_p = ( \begin{array}{cccc} (Q_1)_{p_1, 1} X   &  (Q_1)_{p_1, 2} X & \cdots & (Q_1)_{p_1, N_1}  X \end{array})$$
where $X$ is row $p_2$ of $Q_2$ and row $q$ of $Q$ is of the form 
$$\rho_q = ( \begin{array}{cccc} (Q_1)_{q_1, 1} Y  &  (Q_1)_{q_1, 2} Y  & \cdots & (Q_1)_{q_1, N_1} Y \end{array}),$$
where $Y$ is row $q_2$ of $Q_2$. Since $G_1$ and $G_2$ are equidistributed, there is $\pi_1$ such that
$|(Q_1)_{q_1,j}| = |(Q_1)_{q_2, \pi_1(j)}|$ for each $j \in {\mathbb Z}_{N_1}$ and similarly, the magnitudes of the entries in $Y$ are obtained from
those  in $X$ by applying a permutation $\pi_2$ to the indices. Thus, the 
magnitudes of the entries of $\rho_q$ are a permutation of those of $\rho_p$, so $G$ is equidisributed.
\end{ex}

\subsection{Grassmannian \jh{Parseval} frames and equidistribution}

It is known that equiangular Parseval frames do not exist for all choices of $K$ and $N \ge K$.
In the absence of such frames, perhaps the best alternative is known as Grassmannian frames \cite{HS03}.
These minimize the maximal magnitude of the inner products between any two frame vectors, subject to certain constraints, for example
among equal-norm frames. Here, we consider such minimizers among the family of Parseval frames. As before, we express this
property of frames in terms of the corresponding Gram matrices.

\begin{defn}
Let $G$ be the Gram matrix for any frame consisting of $N$ vectors over $\mathbb F^K$ and 
let $\mu(G) = \max_{j \ne l} |G_{j,l} |$. 
A frame $\mathcal{F}$ is called a \textem{Grassmannian Parseval frame} if 
it is an $(N,K)$-frame and if its Gram matrix $G$ satisfies
$$
   \mu (G) = \min_{G' \in \M} \mu(G') \, .
$$
\end{defn}

Since the space of rank-$K$ orthogonal projections in ${\mathbb F}^{N\times N}$ is compact, the minimum on the right hand side
exists by the continuity of $\mu$, and thus Grassmannian Parseval frames exist for any $N$ and $K$.

In the usual topology of $\mathbb F^{N \times N}$, the Gram matrices belonging to equal-norm frames whose vectors have norm $\sqrt{K/N}$ 
form a paracompact set.
Moreover, the subset of Gram matrices belonging to equal-norm $(N,K)$-frames is compact and non-empty
for each $K$ and $N \ge K$.
By the continuity of $\mu$ and the compactness, 
minimizers for $\mu$ always exist over this restricted space.  We call such minimizers Grassmannian equal-norm Parseval frames.

\begin{defn}
Let $\Omega_{N,K}$ denote the set of Gram matrices corresponding to equal-norm frames $\mathcal F = \{f_j \}_{j=1}^N$ for $\mathbb F^K$
with $\|f_j \|^2 = K/N$ for all $j \in \Zn$.  
A frame $\mathcal{F}$ is called a \textem{Grassmannian equal-norm frame} 
for $\mathbb F^K$ if its Gram matrix $G$ is in  $\Omega_{N,K}$ and 
$$
   \mu (G) = \min_{G' \in \Omega_{N,K}} \mu(G') \, .
$$

A frame $\mathcal{F}$ is called a \textem{Grassmannian equal-norm Parseval frame} if 
it is an equal-norm $(N,K)$-frame and if its Gram matrix $G$ satisfies
$$
   \mu (G) = \min_{G' \in \M\cap\Omega_{N,K}} \mu(G') \, .
$$
\end{defn}
By the set inclusion $\M\cap\Omega_{N,K}\subset \M$, a Grassmannian Parseval frame which is equal norm is a Grassmannian equal-norm Parseval frame.  Similarly,  by $\M\cap\Omega_{N,K}\subset \Omega_{N,K}$, a Grassmannian equal-norm frame which is Parseval is also a Grassmannian equal-norm Parseval frame.

\jh{In \cite{HP04}, Grassmannian equal-norm Parseval frames are shown to
be the optimal frames when frames are used as analog codes and up to two frame coefficients are erased in the course of a transmission. Based on the numerical construction of optimal frames for $\mathbb R^3$, they did not seem to have
a simple geometric structure, apart from the case of equiangular Parseval frames.
Nevertheless, it is intriguing that there are other dimensions 
for which we can find equal-norm Parseval frames that are not equiangular, but equidistributed.}
We provide examples for the case where $\mathbb F = \mathbb R$. 

\begin{ex}\label{Ex:GrassmannianSemicirc}
Let $K=2$ and $N>3$.  For $j \in \Zn$, let 
$$f_j = \sqrt{\frac{2}{N}}
 \left( 
 \begin{array}{cc}
 \cos (\pi j/N) \\
 \sin(\pi j/N)
 \end{array}
 \right) ,
 $$ 
 then $\mathcal{F}=\{ f_j : j \in \Zn \}$ is easily verified to be a Parseval and equidistributed frame, but it is not equiangular.  Furthermore, as shown in \cite{BK06}, this frame is a minimizer of $\mu$ over the space of equal norm frames, so it must also be a minimizer of $\mu$ over the intersection of the equal-norm and Parseval frames.   Therefore, $\mathcal{F}$ is a Grassmannian equal-norm Parseval frame which is equidistributed.

\end{ex}

\begin{ex}\label{Ex:GrassmannianMUB}
Let $K=4$ and $N=12$.  Consider the $(12, 4)$-frame $\mathcal F$ with analysis operator $V$ whose vectors are given by the columns of the following synthesis matrix,
$$
V^*
=
\left(
\begin{array}{cccccccccccc}
\sqrt{\frac{1}{3}} & 0 & 0 & 0     & a & a & a & -a        & a & a & a & -a \\
0 &\sqrt{\frac{1}{3}}  & 0 & 0     & a & a & -a & a        & a & -a & a & a \\
0& 0 & \sqrt{\frac{1}{3}} & 0     & a & -a & -a & -a     & a & -a & -a & -a \\
0 & 0 & 0 & \sqrt{\frac{1}{3}}     & a & -a & a & a        & -a & -a & a & -a \\
\end{array}
\right),
$$
where $a=\sqrt{1/12}$.  This is an equal-norm sequence of vectors which can be grouped into 3 sets of 4 orthogonal vectors, thus it is straightforward to verify that this is a Parseval frame for $\mathbb R^4$. In addition, inspecting inner products between the vectors shows that they form, up to an overall scaling of the norms, mutually unbiased bases. 
Thus $\mathcal F$ is equidistributed (and equal-norm).  To see that this is a Grassmannian equal-norm Parseval frame, we note that showing that a frame to be Grassmannian equal-norm is equivalent to showing that it corresponds to an optimal sphere packing; that is, we desire the absolute value of the smallest angle to be as large as possible.  With this in mind, we compute that the absolute values of the sines of all possible angles between frame vectors belong to the set $\{1, \sqrt{3}/2 \}$. The orthoplex bound \jh{(see \cite{ORTHOPLEX} for details)} shows us that $\sqrt{3}/2$ is indeed the largest possible value that the sine of the smallest angle in such a frame can take, thereby verifying that this is a Grassmannian equal-norm Parseval frame. 
\end{ex}

\section{Bounds for frame potentials and structured frames}

Special classes of frames are characterized with the help of inequalities
for frame potentials, which relate to Frobenius norms.
This is the case for equal-norm Parseval frames and
for equiangular Parseval frames.

\begin{defn}
The $p$-th frame potential of a frame $\mathcal F = \{f_j\}_{j=1}^N$
for a real or complex Hilbert space $\mathcal H$ is given by
$$
  \Up{p}(\mathcal F) = \sum_{j,l=1}^N |\langle f_j , f_l \rangle |^{2p}
$$ 
\end{defn}

Benedetto and Fickus showed that among frames $\{f_j\}_{j=1}^N$ 
whose vectors all have unit norm, 
the tight frames are minimizers for $\Up{1}$ \cite{BF03}.
We adjust the norms to obtain a characterization of
equal-norm Parseval frames.

\begin{thm}[Benedetto and Fickus \cite{BF03}]
Let $\mathcal F =\{f_j\}_{j=1}^N$ be a frame for $\mathbb F^K$, with $\mathbb F = \mathbb R$
or $\mathbb C$, let $\|f_j\|^2= K/N$ for each $j \in {\mathbb Z}_N$,
then
$$
   \Up{1}(\mathcal F) = \sum_{j,l=1}^N |\langle f_j, f_l \rangle |^2 \ge K
$$
   and equality holds if and only if $\mathcal F$ is Parseval.
\end{thm}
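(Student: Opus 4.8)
The plan is to recognize the frame potential as the squared Frobenius norm of the Gramian and then reduce the inequality to a statement about the eigenvalues of the frame operator. First I would observe that
$$
\Phi_1(\mathcal F) = \sum_{j,l=1}^N |\langle f_j, f_l\rangle|^2 = \sum_{j,l=1}^N |G_{j,l}|^2 = \operatorname{tr}(G^2),
$$
using that $G = G^*$, so that $\|G\|_F^2 = \operatorname{tr}(G^* G) = \operatorname{tr}(G^2)$. The key structural fact to invoke is that the Gramian $G = VV^*$ and the frame operator $S = V^*V$ have identical nonzero eigenvalues, namely the common nonzero squared singular values of the analysis operator $V$. Since $S$ is a positive, invertible operator on the $K$-dimensional space $\mathbb F^K$, it has exactly $K$ positive eigenvalues $\lambda_1, \dots, \lambda_K$, and these account for all the nonzero eigenvalues of $G$. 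Consequently $\operatorname{tr}(G^2) = \sum_{i=1}^K \lambda_i^2$.

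Next I would use the norm hypothesis to pin down the trace. Since $\operatorname{tr}(S) = \operatorname{tr}(V^*V) = \operatorname{tr}(VV^*) = \operatorname{tr}(G) = \sum_{j=1}^N \|f_j\|^2 = N \cdot \tfrac{K}{N} = K$, the eigenvalues satisfy $\sum_{i=1}^K \lambda_i = K$. Applying the Cauchy--Schwarz inequality (equivalently, the convexity of $t \mapsto t^2$) to the vector $(\lambda_1, \dots, \lambda_K)$ paired with the all-ones vector then yields
$$
\Phi_1(\mathcal F) = \sum_{i=1}^K \lambda_i^2 \ge \frac{1}{K}\Bigl(\sum_{i=1}^K \lambda_i\Bigr)^2 = \frac{K^2}{K} = K,
$$
which is the asserted bound.

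For the equality case I would argue that Cauchy--Schwarz is tight precisely when all the $\lambda_i$ coincide; combined with $\sum_i \lambda_i = K$ this forces $\lambda_i = 1$ for every $i$, so $S = I_K$. By the spectral theorem, $S = I_K$ is equivalent to $\sum_{j=1}^N \langle x, f_j\rangle f_j = x$ for all $x \in \mathbb F^K$, which is exactly the Parseval condition $A = B = 1$; conversely, if $\mathcal F$ is Parseval then $S = I_K$, every $\lambda_i = 1$, and the potential equals $K$. The main obstacle --- really the only point requiring care --- is justifying the spectral identification of $G$ and $S$ and confirming that $S$ has exactly $K$ (all positive) eigenvalues with no spurious zeros. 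This follows from the definition of a frame: the lower bound $A > 0$ guarantees that $\mathcal F$ spans $\mathbb F^K$, so $S$ is positive definite and invertible, and hence the eigenvalue count and the common nonzero spectrum of $G$ and $S$ are exactly as claimed.
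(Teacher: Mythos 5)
Your proof is correct, and it reaches the paper's inequality by a genuinely different (though closely related) route. The paper stays entirely on the $N$-dimensional side: it applies the Cauchy--Schwarz inequality in the Hilbert--Schmidt inner product, comparing $G$ against the orthogonal projection $P$ onto the range of $G$, so that $\operatorname{tr}(G^2)\operatorname{tr}(P^2) \ge (\operatorname{tr}(GP))^2$, and then uses $\operatorname{tr}(GP)=\operatorname{tr}(G)=K=\operatorname{tr}(P^2)$; equality forces $G$ and $P$ to be collinear, hence $G=P$, which is exactly the Parseval condition. You instead pass to the $K$-dimensional side via the frame operator: you invoke the spectral identification of the nonzero eigenvalues of $G=VV^*$ with those of $S=V^*V$, use the frame lower bound $A>0$ to guarantee that $S$ is positive definite (so there are exactly $K$ nonzero eigenvalues, no spurious zeros), and then apply scalar Cauchy--Schwarz to the eigenvalue vector $(\lambda_1,\dots,\lambda_K)$ against the all-ones vector. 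After diagonalizing, the two inequalities are the same one in disguise, and in both arguments the norm hypothesis $\|f_j\|^2=K/N$ enters only through $\operatorname{tr}(G)=K$. What each buys: the paper's version is shorter and needs no spectral theory at all, only trace identities and operator Cauchy--Schwarz; your version requires the extra lemma on the common nonzero spectrum of $VV^*$ and $V^*V$, but in exchange it makes the extremal structure completely explicit ($\lambda_i\equiv 1$, i.e.\ $S=I_K$), so the equality case and its converse come essentially for free from the reconstruction identity $\sum_j \langle x,f_j\rangle f_j = Sx$.
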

\begin{proof}
The assumption on the norms is equivalent to the condition 
on the diagonal entries,
$G_{j,j}=K/N$,
of the Gram matrix $G=V V^*$ of the frame $\mathcal F$.
By the Cauchy-Schwarz inequality with respect to the Hilbert-Schmidt inner product, $\Up{1}({\mathcal F}) = tr(G^2) \ge (tr(G P))^2/tr(P^2)$, where $P$ is the orthogonal
projection onto the range of $G$ in $\ell^2({\mathbb Z}_N)$.
However $tr(G P) = tr(G) = K = tr(P)=tr(P^2)$, thus the claimed
lower bound follows. The case of equality holds if and only if $G$ and
$P$ are collinear, which means whenever $\mathcal F$ is Parseval. 
\end{proof}

In analogy with the characterization of tight frames, if equiangular tight frames exist among unit-norm frames, 
then they are minimizers  for $\Up{p}$ if $p>1$ \cite{RBKSC04,Okt07}, see also \cite{welch:bound}. Again, we present this result with
rescaled norms to replace tight frames by Parseval frames.

\begin{thm}[]
Let $\mathcal F = \{f_j\}_{j=1}^N$ be a frame for $\mathbb F^K$, with $\mathbb F = \mathbb R$
or $\mathbb C$, and $\|f_j\|^2= K/N$ for all $j \in {\mathbb Z}_N$, and let $p>1$,
then
$$
  \Up{p}(\mathcal F) = \sum_{j,l=1}^N |\langle f_j, f_l \rangle |^{2p}
  \ge \frac{K^{2p}(N-1)^{p-1} + K^p  (N-K)^p}{(N-1)^{p-1} N^{2p-1}}
$$
and equality holds if and only if $\mathcal F$ is an equiangular Parseval frame.
\end{thm}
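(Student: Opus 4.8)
The plan is to split the potential into its diagonal and off-diagonal contributions and then bound the off-diagonal part by combining convexity with the previous theorem of Benedetto and Fickus.

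First I would pass to the Gram matrix $G=VV^*$, whose diagonal entries are $G_{j,j}=\|f_j\|^2=K/N$ by hypothesis and whose off-diagonal entries satisfy $|G_{j,l}|=|\langle f_j,f_l\rangle|$. Separating the two types of entries,
$$
\Up{p}(\mathcal F)=\sum_{j,l=1}^N |G_{j,l}|^{2p}
= N\Bigl(\frac{K}{N}\Bigr)^{2p}+\sum_{j\neq l}|G_{j,l}|^{2p}
=\frac{K^{2p}}{N^{2p-1}}+\sum_{j\neq l}|G_{j,l}|^{2p},
$$
so the diagonal already produces the first term of the asserted bound, and everything reduces to a lower estimate for the off-diagonal sum.

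Next I would apply Jensen's inequality to the strictly convex map $t\mapsto t^p$ (using $p>1$) over the $N(N-1)$ nonnegative values $|G_{j,l}|^2$, obtaining
$$
\sum_{j\neq l}|G_{j,l}|^{2p}\ \ge\ \bigl(N(N-1)\bigr)^{1-p}\Bigl(\sum_{j\neq l}|G_{j,l}|^2\Bigr)^{p}.
$$
To control the inner sum I would invoke the theorem of Benedetto and Fickus in the Frobenius-norm form $\sum_{j,l}|G_{j,l}|^2=tr(G^2)\ge K$, whence $\sum_{j\neq l}|G_{j,l}|^2=tr(G^2)-K^2/N\ge K(N-K)/N$. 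Since $x\mapsto x^p$ is increasing, substituting this lower bound and collecting the two contributions over the common denominator $N^{2p-1}(N-1)^{p-1}$ produces exactly the claimed right-hand side.

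The step I expect to require the most care is the equality analysis, since the two inequalities are chained through the monotone map $x\mapsto x^p$ and I must check that their equality cases decouple cleanly. Equality in Jensen forces all off-diagonal magnitudes $|G_{j,l}|$ with $j\neq l$ to coincide, which together with the standing equal-norm assumption is precisely the statement that $\mathcal F$ is equiangular; equality in the Benedetto--Fickus bound forces $tr(G^2)=K$, i.e.\ that $\mathcal F$ is Parseval. Conversely, an equiangular Parseval frame turns both steps into equalities and hence attains the bound. The delicate point is therefore to confirm that equality throughout the chain is equivalent to equality in \emph{both} separate inequalities, so that the characterization splits into ``equiangular'' (from strict convexity) and ``Parseval'' (from the Frobenius-norm bound), with the equal-norm hypothesis being exactly what upgrades constant off-diagonal magnitude to equiangularity.
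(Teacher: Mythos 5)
Your proposal is correct and follows essentially the same route as the paper: split the potential into diagonal and off-diagonal parts, apply Jensen's inequality with the convex map $t\mapsto t^p$ to the $N(N-1)$ off-diagonal terms, and bound $\sum_{j\neq l}|G_{j,l}|^2$ from below via the Benedetto--Fickus theorem, with the equality case decoupling into ``equiangular'' (Jensen) and ``Parseval'' (Benedetto--Fickus) exactly as you describe. The only cosmetic difference is that the paper phrases the second step in terms of $\Up{1}(\mathcal F)-K^2/N$ rather than $tr(G^2)-K^2/N$, which is the same quantity.
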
 
\begin{proof}
With the elementary properties
of equal-norm frames and Jensen's inequality, we obtain the bound
$$
  \Up{p}(\mathcal F) = \sum_{j=1}^N \|f_j\|^{4p} + \sum_{j \ne l} |\langle f_j, f_l \rangle|^{2p}
  \ge \frac{K^{2p}}{N^{2p-1}}
  + \frac{1}{N^{p-1}(N-1)^{p-1}} (\sum_{j\ne l} |\langle f_j, f_l \rangle|^2)^p \, .
  $$
  Expressing this in terms  of $\Up{1}$ and using the preceding theorem then gives 
\begin{align*}
 \Up{p}(\mathcal F)  
  & \ge \frac{K^{2p}}{N^{2p-1}} + \frac{1}{N^{p-1}(N-1)^{p-1}} (\Up{1}(\mathcal F) - \frac{K^2}{N})^{p}\\
   & \ge \frac{K^{2p}}{N^{2p-1}} + \frac{1}{N^{p-1}(N-1)^{p-1}} \frac{ K^p(N-K)^p}{N^p}\\
   & = \frac{K^{2p}(N-1)^{p-1} + K^p  (N-K)^p}{(N-1)^{p-1} N^{2p-1}} \, .
\end{align*}
  Moreover, equality holds in the Cauchy-Schwarz and Jensen inequalities
  if and only if $\mathcal F$ is Parseval and if there is $C\ge 0$
  such that $|\langle f_j , f_l \rangle | = C$ for all $j, l \in {\mathbb Z}_N$
  with $j \ne l$.
\end{proof}

If equality holds, then inspecting the proof shows that 
the magnitude of the 
off-diagonal entries of the Gram  matrix 
is a constant, see also \cite{GR09}, \cite{HP04}, and \cite{HS03}, which we 
record for further use,
$$ 
   C_{N,K} = \sqrt{\frac{1}{N(N-1)} \Bigl(K - \frac{K^2}{N}\Bigr)} = \sqrt{\frac{K(N-K)}{N^2(N-1)}} \, .
$$

\begin{cor} Let $p>1$.
If $\mathcal F = \{f_j\}_{j=1}^N$ is a frame for $\mathbb F^K$
with
$ \|f_j\|^2 = K/N$ for each $j \in {\mathbb Z}_N$
and
$\Up{p}(\mathcal F)$ achieves the lower bound in the preceding theorem
 then 
 $\mathcal F$  is Parseval and
$|\langle f_j , f_l \rangle | = C_{N,K}$ for all $j \ne l$.
\end{cor}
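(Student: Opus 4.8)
The plan is to read this corollary off directly from the equality case of the preceding theorem, supplying only the short computation that pins down the common off-diagonal magnitude as $C_{N,K}$. The preceding theorem asserts that $\Up{p}(\mathcal F)$ attains its lower bound if and only if $\mathcal F$ is an equiangular Parseval frame; in particular, attaining the bound already forces $\mathcal F$ to be Parseval and to satisfy $|\langle f_j, f_l\rangle| = C$ for some constant $C \ge 0$ and all $j \ne l$. So the first and only structural step is to trace which of the two inequalities in that proof yields which conclusion: equality in the Cauchy--Schwarz step (the step that produces the $\Up{1}\ge K$ bound) is the collinearity condition forcing $G$ to be a projection, i.e.\ $\mathcal F$ Parseval, while equality in Jensen's inequality forces all off-diagonal magnitudes $|\langle f_j, f_l\rangle|$ to coincide. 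Since the hypothesis is that $\Up{p}(\mathcal F)$ achieves the bound, both inequalities must be saturated simultaneously.

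To identify the value of the constant, I would use the Parseval property together with the equal-norm hypothesis. Since $\mathcal F$ is Parseval, $G$ is an orthogonal projection, so $\Up{1}(\mathcal F) = tr(G^2) = tr(G) = K$. Separating the diagonal from the off-diagonal contributions to $\Up{1}$ then gives
\[
\sum_{j \ne l} |\langle f_j, f_l\rangle|^2 = \Up{1}(\mathcal F) - \sum_{j=1}^N \|f_j\|^4 = K - N\cdot\frac{K^2}{N^2} = K - \frac{K^2}{N} \, .
\]
There are exactly $N(N-1)$ off-diagonal pairs, each contributing $C^2$, so $N(N-1)\,C^2 = K - K^2/N$, whence
\[
C = \sqrt{\frac{1}{N(N-1)}\Bigl(K - \frac{K^2}{N}\Bigr)} = C_{N,K} \, ,
\]
matching the recorded formula for $C_{N,K}$.

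Since the statement is essentially a repackaging of the equality characterization in the theorem, I do not expect a genuine obstacle. The only point requiring care is the bookkeeping: confirming that ``$\Up{p}(\mathcal F)$ achieves the lower bound'' is precisely the equality condition invoked in the theorem's proof (both the Cauchy--Schwarz and Jensen inequalities saturated at once), and then carrying out the one trace identity above cleanly so that the resulting constant agrees with the displayed expression for $C_{N,K}$. No compactness, limiting argument, or manifold structure is needed here---everything follows from the preceding theorem and the identity $tr(G^2)=tr(G)=K$ for Parseval frames.
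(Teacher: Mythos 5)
Your proposal is correct and matches the paper's own treatment: the paper proves this corollary implicitly, via the remark preceding it that ``inspecting the proof'' of the theorem shows the equality case forces Parsevality (through the $\Up{1}\ge K$ bound) and a constant off-diagonal magnitude (through Jensen), with the constant then pinned down by exactly the trace computation you give, $N(N-1)C^2 = K - K^2/N$, yielding $C = C_{N,K}$. Your bookkeeping of which saturated inequality yields which structural conclusion also agrees with the paper's argument.
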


By definition the value of $\Up{p}(\mathcal F)$
only depends on the entries of the corresponding Gram matrix.
Thus, the characterizations of equal-norm $(N,K)$-frames and of
equiangular $(N,K)$-frames are implicitly statements about equivalence
classes of frames.

\begin{cor}
If two frames $\mathcal F = \{f_j \}_{j =1}^N$ and
$\mathcal F' = \{f'_j\}_{j =1}^N$ are unitarily equivalent, then
$\Up{p}({\mathcal F}) = \Up{p}({\mathcal F}')$.
\end{cor}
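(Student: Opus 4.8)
The plan is to reduce the claim to the observation, already noted just above, that $\Up{p}(\mathcal F)$ is a function of the Gram matrix alone, and then to invoke the earlier Proposition identifying the Gramians of unitarily equivalent frames. In other words, the entire content of the corollary is that passing to a unitarily equivalent frame leaves the Gramian—and hence any quantity built only from its entries—unchanged.

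First I would recall that unitary equivalence furnishes an orthogonal or unitary operator $U$ on $\mathcal H$ with $f_j = U f'_j$ for every $j \in {\mathbb Z}_N$. Since such a $U$ preserves the inner product, one has $\langle f_j, f_l \rangle = \langle U f'_j, U f'_l \rangle = \langle f'_j, f'_l \rangle$ for all $j,l \in {\mathbb Z}_N$, so the two Gram matrices coincide entry by entry, $G = G'$. Equivalently, this is precisely the content of the Proposition characterizing unitary equivalence via equality of Gramians, which may simply be quoted.

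Then I would substitute into the definition: since
$$
  \Up{p}(\mathcal F) = \sum_{j,l=1}^N |\langle f_j, f_l \rangle|^{2p} = \sum_{j,l=1}^N |G_{j,l}|^{2p}
$$
depends only on the magnitudes of the Gram matrix entries, the identity $G = G'$ forces $\Up{p}(\mathcal F) = \Up{p}(\mathcal F')$ at once. There is no genuine obstacle here: the statement follows directly from the inner-product-preserving property of unitaries, together with the already-established correspondence between unitary equivalence classes of frames and their Gram matrices. The only point requiring a moment's care is the convention for the Gramian—whether $G_{j,l} = \langle f_j, f_l \rangle$ or its complex conjugate—but since $\Up{p}$ involves only the moduli $|G_{j,l}|$, the choice is immaterial.
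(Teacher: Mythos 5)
Your proof is correct and follows exactly the route the paper intends: the paper states this corollary without a separate proof, treating it as an immediate consequence of the preceding remark that $\Up{p}$ depends only on the Gram matrix entries together with the earlier Proposition that unitarily equivalent frames have identical Gramians, which is precisely your argument.
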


For this reason, we consider instead of $\Up{p}$ the corresponding function
of Gram matrices.

Moreover, it is for our purposes advantageous to consider the compact manifold
$\M$ consisting of Parseval frames instead of the open manifold of equal-norm frames. In this setting, we have analogous theorems which characterize 
the equal-norm case and the equiangular case.

\begin{thm}
Let $G \in \M$, then
$$
  \sum_{j=1}^N |G_{j,j} |^2 \ge \frac{K^2}{N}
$$
and equality holds if and only if $G_{j,j}=\frac{K}{N}$ for each $j \in \mathbb Z_N$.
\end{thm}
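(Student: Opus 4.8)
The plan is to reduce the statement to a single application of the Cauchy-Schwarz inequality, using only that $G$ is Hermitian with trace $K$; the full projection property $G=G^2$ plays no role in this particular bound. First I would observe that because $G=G^*$, every diagonal entry $G_{j,j}$ is real, and the defining trace condition gives $\sum_{j=1}^N G_{j,j} = tr(G) = K$. Thus $(G_{1,1}, \dots, G_{N,N})$ is a real vector whose coordinates sum to $K$, and the goal is simply to bound the sum of its squares from below.

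Applying Cauchy-Schwarz to this vector and the all-ones vector $(1, \dots, 1) \in \mathbb{R}^N$ yields
$$K^2 = \Bigl(\sum_{j=1}^N G_{j,j}\Bigr)^2 \le N \sum_{j=1}^N |G_{j,j}|^2,$$
which is exactly the claimed inequality after dividing by $N$. Equivalently, one may invoke the convexity of $t \mapsto t^2$ together with Jensen's inequality applied to the uniform average of the diagonal entries; both routes produce the same constant $K^2/N$.

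The equality analysis is then immediate from the equality case of Cauchy-Schwarz: equality forces the diagonal vector to be parallel to the all-ones vector, so every $G_{j,j}$ equals a common value $c$, and the trace condition $Nc = K$ forces $c = K/N$. Conversely, if $G_{j,j} = K/N$ for all $j$, the sum of squares equals $N (K/N)^2 = K^2/N$, so the bound is attained. I do not expect a genuine obstacle here: the entire content of the theorem is carried by the trace normalization $tr(G)=K$ together with the reality of the diagonal, and the projection structure defining $\M$ is not otherwise needed.
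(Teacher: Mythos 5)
Your proof is correct and is essentially identical to the paper's: both apply the Cauchy--Schwarz inequality to the diagonal entries together with the trace condition $tr(G)=K$, and both derive the equality case by forcing all diagonal entries to be equal, hence equal to $K/N$. Your added observations (reality of the diagonal, the Jensen alternative, and that $G=G^2$ is not needed) are accurate but do not change the argument.
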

\begin{proof}
We know that $\sum_{j=1}^N G_{j,j} = K$, so the Cauchy-Schwarz inequality
gives
$$
   \sum_{j=1}^N |G_{j,j} |^2 \ge \frac{1}{N}(\sum_{j=1}^N G_{j,j})^2 = K^2/N
$$
and equality is achieved if and only if $G_{j,j}=G_{l,l}$ for all 
$j,l \in \mathbb Z_N$. By summing the diagonal entries of $G$, we then obtain
$N G_{j,j} = K$ for each $j \in {\mathbb Z}_N$.
\end{proof}

In terms of $(N,K)$-frames $\{f_j\}_{j=1}^N$, 
the function estimated here is $\sum_{j=1}^N \|f_j\|^4$.
Bodmann and Casazza called this a frame energy. They showed that 
if a Parseval frame has
a sufficiently small energy, then under certain additional conditions
an equal-norm  Parseval frame can be found in its vicinity \cite{BodmannCasazza:2010}.

Next, we state the characterization of equiangular Parseval frames.
Since this was only published in a thesis, we are grateful for the opportunity to 
present the proof here.

\begin{thm}\label{thmelwood}[\jh{Elwood \cite{Elw11}}]
Let $G \in \M$, then
$$
  \sum_{j,l=1}^N |G_{j,l} |^4 \ge \frac{K^2(K^2-2K+N)}{N^2(N-1)}
$$
and equality holds if and only if $G_{j,j} = K/N$ and $|G_{j,l}|=C_{N,K}$ for each $j\ne l$.
\end{thm}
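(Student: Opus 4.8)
The plan is to reduce everything to the two scalar identities that hold for any $G \in \M$: the trace condition $\sum_{j=1}^N G_{j,j} = K$, and, since $G$ is an orthogonal projection, $\sum_{j,l=1}^N |G_{j,l}|^2 = \mathrm{tr}(G^*G) = \mathrm{tr}(G^2) = \mathrm{tr}(G) = K$. Writing $d_j = G_{j,j} = \|f_j\|^2 \ge 0$ and $D_2 = \sum_{j=1}^N d_j^2$, the second identity splits as $\sum_{j \ne l} |G_{j,l}|^2 = K - D_2$. I would then split the target potential into its diagonal and off-diagonal contributions, $\sum_{j,l} |G_{j,l}|^4 = \sum_{j=1}^N d_j^4 + \sum_{j\ne l} |G_{j,l}|^4$, and bound each piece separately.

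First I would apply the Cauchy--Schwarz (power-mean) inequality to each piece using the constraints above. For the $N(N-1)$ off-diagonal terms this gives $\sum_{j\ne l}|G_{j,l}|^4 \ge \frac{1}{N(N-1)}\bigl(\sum_{j\ne l}|G_{j,l}|^2\bigr)^2 = \frac{(K-D_2)^2}{N(N-1)}$, with equality iff all off-diagonal magnitudes are equal. For the $N$ diagonal terms it gives $\sum_{j=1}^N d_j^4 \ge \frac{1}{N}\bigl(\sum_{j=1}^N d_j^2\bigr)^2 = \frac{D_2^2}{N}$, with equality iff all $d_j$ are equal. The crucial point is that both bounds are expressed through the single scalar $D_2$, so the problem collapses to minimizing the one-variable function $g(x) = \frac{x^2}{N} + \frac{(K-x)^2}{N(N-1)}$ over the feasible range of $x = D_2$. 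Since $\sum_j d_j = K$, Cauchy--Schwarz forces $D_2 \ge K^2/N$. A direct computation shows $g$ is a convex parabola whose vertex sits at $x = K/N \le K^2/N$, so $g$ is increasing on $[K^2/N,\infty)$ and is therefore minimized at $x = K^2/N$. Evaluating $g(K^2/N)$ and simplifying yields exactly $\frac{K^2(K^2-2K+N)}{N^2(N-1)}$.

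The step I expect to carry the real content is the second Cauchy--Schwarz, applied to the diagonal. The naive approach --- minimize the diagonal sum $\sum_j d_j^4$ and the off-diagonal bound $(K-D_2)^2/(N(N-1))$ jointly over the $d_j$ --- is genuinely delicate, because the combined objective is not Schur-convex (a Lagrange analysis allows the $d_j$ to take up to three distinct values), so one cannot simply argue that equal diagonal entries are optimal. Bounding $\sum_j d_j^4$ below by $D_2^2/N$ is what linearizes the coupling and converts the obstruction into an elementary monotonicity statement about $g$. Finally, for equality: equality in $g(D_2) \ge g(K^2/N)$ forces $D_2 = K^2/N$, which together with $\sum_j d_j = K$ gives $d_j = K/N$ for all $j$ (equal norm, and consistent with equality in the diagonal Cauchy--Schwarz), while equality in the off-diagonal Cauchy--Schwarz forces $|G_{j,l}|$ to be constant for $j \ne l$; counting the $N(N-1)$ off-diagonal terms against $\sum_{j\ne l}|G_{j,l}|^2 = K - K^2/N$ pins this constant to $C_{N,K}$, giving precisely the equiangular Parseval condition.
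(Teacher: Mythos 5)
Your proof is correct, and it takes a genuinely different route from the paper. The paper proves the bound by exhibiting an explicit sum-of-squares identity: the deficit $\sum_{j,l}|G_{j,l}|^4 - \frac{K^2(K^2-2K+N)}{N^2(N-1)}$ is rewritten, using $\sum_{j,l}|G_{j,l}|^2 = \mathrm{tr}(G) = K$, as
$\sum_{j\ne l}\bigl(|G_{j,l}|^2 - C_{N,K}^2\bigr)^2 + \sum_{j}\bigl(G_{j,j}^2 - \tfrac{K^2}{N^2}\bigr)^2 + \tfrac{2K(K-1)}{N(N-1)}\sum_j \bigl(G_{j,j}-\tfrac{K}{N}\bigr)^2$,
from which nonnegativity and the equality case are immediate. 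You instead split the potential into diagonal and off-diagonal parts, apply Cauchy--Schwarz to each, and collapse the problem to minimizing the scalar quadratic $g(x) = \tfrac{x^2}{N} + \tfrac{(K-x)^2}{N(N-1)}$ over $x = D_2 \ge K^2/N$, using that the vertex of $g$ sits at $K/N \le K^2/N$. Both arguments rest on the same two linear constraints (trace and Hilbert--Schmidt norm), and your equality analysis correctly recovers $G_{j,j}=K/N$ and $|G_{j,l}| = C_{N,K}$ from the equality cases of the three inequalities in your chain. The trade-off: the paper's identity makes everything manifest in one line but has to be guessed (or reverse-engineered from the answer) and verified by expansion, whereas your argument is systematic and self-explanatory --- it is essentially the same Jensen-type strategy the paper uses for its earlier theorem on $\Phi_p$ with fixed equal norms, adapted to the Parseval manifold where the diagonal is free, with the scalar $D_2$ absorbing the coupling between the diagonal and off-diagonal contributions. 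Your observation that the coupled minimization over the $d_j$ is not simply settled by symmetry, and that the bound $\sum_j d_j^4 \ge D_2^2/N$ is what linearizes this obstruction, is a genuine point that neither appears in nor is needed by the paper's proof, but it explains why your reduction works.
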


\begin{proof}
We recall that by the fact that $G$ is an orthogonal rank-$K$ projection, one has that 
$\sum\limits_{j,l = 1}^N |G_{j,l}|^2 = \sum\limits_{j=1}^N G_{j,j} = K.$  With the help of these identities, we express the difference between the two  sides
of the inequality as a sum of quadratic expressions,
\begin{align*}
&\sum\limits_{j,l=1}^N  |G_{j,l}|^{4} -  \frac{K^2(K^2 + N -2K)}{N^2(N-1)}\\
&=
\tiny
{
\sum\limits_{
\begin{array}{cc}
j, l =1 \\
j  \neq l
\end{array}
}^N
}
\left(
|G_{j,l}|^2 - C_{N,K}^2
\right)^2
+
\sum\limits_{j=1}^N
\left(
G_{j,j}^2 - \frac{K^2}{N^2}
\right)^2
+
\frac{2K(K-1)}{N(N-1)}
\sum\limits_{j=1}^N
\left(
G_{j,j} - \frac{K}{N}
\right)^2 \, .
\end{align*}
In this form it is manifest that this quantity is non-negative and
that it vanishes if and only if $G$ is a rank-$K$ orthogonal projection
with $G_{j,j}=K/N$ for all $j$ and with $|G_{j,l}| = C_{N,K}$
for all $j \ne l$.
\end{proof}

It is natural to ask whether a characterization of Grassmannian Parseval frames in terms of frame potentials exists. 
In order to formulate this in a convenient manner, we first introduce another type of frame potential.

\begin{defn}\label{defExy} 
Let $G = (G_{a,b})_{a,b=1}^N  \in \M$.  Given $x,y  \in \Zn$ and $\eta >0$, we define  the \textem{exponential potential} $E_{x,y}^{\eta}: \M \rightarrow \R$ by
\begin{align*}
E_{x,y}^{\eta} (G)  
&= e^{\eta |G_{x,y}|^2} \, .
\end{align*}
Moreover, we define the \textem{off-diagonal sum potential} of $G$ as
$$
   \Phi_{\mathrm{od}}^\eta (G) = \sum_{j,l=1}^N (1-\delta_{j,l}) 
   E_{j,l}^{\eta}(G)  \, ,
$$
where the Kronecker symbol $\delta_{j,l}$ vanishes if $j \ne l$ and contributes
$\delta_{j,j}=1$ otherwise.
\end{defn}

Although for a fixed value of $\eta$ a Grassmannian Parseval frame may fail to be a minimizer for $\Phi_{\mathrm{od}}^\eta$,
the family of frame potentials $\{\Phi_{\mathrm{od}}^\eta\}_{\eta>0}$ characterizes them.

\begin{prop}
Let $G \in \M$,
then 
$$
  \mu(G) = \lim_{\eta \to \infty}\frac{1}{\eta} \ln(\Phi^\eta_{\mathrm{od}}(G)) \, .
$$
Moreover, if $G'$ belongs to
 a Grassmannian Parseval frame and $G'' \in \M$ does not, then there exists an $\eta>0$
such that $ \Phi_{\mathrm{od}}^{\eta'} (G') < \Phi_{\mathrm{od}}^{\eta'} (G'')$
for each $\eta'>\eta$.
\end{prop}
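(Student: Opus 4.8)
The plan is to establish the limit identity first, via a log-sum-exp (``tropical'') estimate, and then to bootstrap the separation statement directly from the quantitative two-sided bounds that this estimate produces; no limit of the normalized potentials is actually needed for the second part.

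For the limit, I would write $m = \max_{j \ne l}|G_{j,l}|^2$ for the largest squared off-diagonal magnitude. Since there are exactly $N^2 - N$ off-diagonal index pairs, each summand $e^{\eta |G_{j,l}|^2}$ is at most $e^{\eta m}$ while at least one equals $e^{\eta m}$, so
\[
e^{\eta m} \le \Phi_{\mathrm{od}}^\eta(G) \le (N^2 - N)\, e^{\eta m} .
\]
Taking logarithms and dividing by $\eta > 0$ gives
\[
m \le \tfrac{1}{\eta}\ln\!\big(\Phi_{\mathrm{od}}^\eta(G)\big) \le m + \tfrac{\ln(N^2-N)}{\eta} ,
\]
and letting $\eta \to \infty$ squeezes the middle quantity to $m$. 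Because the exponent carries the square $|G_{j,l}|^2$, the limit is $\mu(G)^2$; this is precisely the quantity governing the ordering in the second part.

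For the separation statement I would reuse the displayed bounds rather than the limit. Since $G'$ is a Grassmannian Parseval frame, $\mu(G') = \min_{G \in \M}\mu(G)$, and since $G''$ fails to attain this minimum we have the \emph{strict} inequality $\mu(G'') > \mu(G')$; squaring (monotone on $[0,\infty)$) yields $m'' := \mu(G'')^2 > \mu(G')^2 =: m'$. Applying the upper bound to $G'$ and the lower bound to $G''$,
\[
\ln\!\big(\Phi_{\mathrm{od}}^\eta(G')\big) \le \ln(N^2-N) + \eta m', \qquad \ln\!\big(\Phi_{\mathrm{od}}^\eta(G'')\big) \ge \eta m'' ,
\]
so $\Phi_{\mathrm{od}}^\eta(G') < \Phi_{\mathrm{od}}^\eta(G'')$ as soon as $\ln(N^2-N) + \eta m' < \eta m''$, i.e. whenever $\eta > \eta_0 := \ln(N^2 - N)/(m'' - m')$. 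As $N \ge 2$ forces $N^2 - N \ge 2$, this threshold is finite and positive, and the strict inequality persists for every $\eta' > \eta_0$, which is exactly the claim.

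There is no deep obstacle here; the one point requiring care is that pointwise convergence of the normalized logarithms to distinct limits does not by itself order the unnormalized potentials $\Phi_{\mathrm{od}}^{\eta'}$ for all large $\eta'$. This is exactly why I would retain the explicit constant $\ln(N^2-N)$ from the sandwich bound instead of passing to the limit: it converts the strict gap $m'' - m' > 0$ into the uniform threshold $\eta_0$, and it also cleanly absorbs the degenerate case $m' = 0$ (where $\Phi_{\mathrm{od}}^\eta(G')$ is constant in $\eta$ while $\Phi_{\mathrm{od}}^\eta(G'') \to \infty$).
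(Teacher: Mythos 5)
Your proof is correct and follows essentially the same route as the paper's: the two-sided sandwich bound, logarithms to extract the limit, and an explicit threshold converting the gap in $\mu$ into an ordering of the unnormalized potentials. The one substantive difference is that you track the exponent correctly: since $E^\eta_{j,l}(G) = e^{\eta|G_{j,l}|^2}$ while $\mu(G) = \max_{j\ne l}|G_{j,l}|$, the dominant term is $e^{\eta\mu(G)^2}$, so the limit of $\tfrac{1}{\eta}\ln\Phi^\eta_{\mathrm{od}}(G)$ is $\mu(G)^2$ rather than $\mu(G)$; the paper's statement and its proof (which writes $e^{\eta\mu(G)} \le \Phi_{\mathrm{od}}^\eta(G) \le N(N-1)e^{\eta\mu(G)}$) overlook this square. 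Since $t \mapsto t^2$ is strictly increasing on $[0,\infty)$, the second, substantive claim is unaffected, and your threshold $\eta_0 = \ln(N^2-N)/\bigl(\mu(G'')^2 - \mu(G')^2\bigr)$ is the correct analogue of the paper's $\ln(N(N-1))/\epsilon$. One quibble with your closing remark: convergence of the normalized logarithms to distinct limits \emph{does} by itself order the unnormalized potentials for all sufficiently large $\eta'$ (eventually $\tfrac{1}{\eta}\ln\Phi_{\mathrm{od}}^\eta(G') < \tfrac{1}{\eta}\ln\Phi_{\mathrm{od}}^\eta(G'')$, and multiplying by $\eta > 0$ and exponentiating preserves the strict inequality); what the limit argument fails to provide is only the explicit, computable threshold, which is the genuine advantage of retaining the constant $\ln(N^2-N)$.
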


\begin{proof}
We have for any $G \in \M$
$$e^{\eta \mu(G)} \le \Phi_{\mathrm{od}}^\eta (G) \le N(N-1) e^{\eta \mu(G)}$$
thus
$\lim_{\eta \to \infty}\frac{1}{\eta} \ln(\Phi^\eta_{\mathrm{od}}(G)) = \mu(G)$.
Moreover, if $G'$ is the Gram matrix of a Grassmannian Parseval frame and $G''$ is not,
then $\mu(G'') = \mu(G')+\epsilon$ for some $\epsilon>0$ and if $\eta >\ln (N(N-1))/\epsilon$, then
$
  \eta \mu(G') + \ln N(N-1) < \eta \mu(G') + \eta \epsilon = \eta \mu(G'') \,  
$
and consequently $ \Phi_{\mathrm{od}}^\eta (G') \le N(N-1) e^{\eta \mu(G')} < e^{\eta \mu(G'')} \le \Phi_{\mathrm{od}}^\eta (G'')$.
\end{proof}

Although $\mu$ is continuous on $\M$, it is not globally differentiable. Thus,
locating even local minima is difficult. Fortunately, we can reduce
the minimization problem for $\mu$ to finding minimizers for
a sequence of frame potentials.

\begin{prop}\label{propodgivesgrass}
Let $\{\eta_m\}_{m=1}^\infty$ be a positive, increasing sequence such that $\lim\limits_{m\rightarrow \infty} \eta_m= + \infty$ and suppose $\left\{G(m)=\left(G(m)_{a,b} \right)_{a,b=1}^N \right\}_{m=1}^\infty \subseteq \M$ is a sequence such that $\Phi_{\mathrm{od}}^{\eta_m}$ achieves its absolute minimum at $G(m)$ for every $m \in \{1,2,3,...\}$, then there exists a subsequence $\left\{G(m_s)  \right\}_{s=1}^\infty$ and $G \in \M$ such that  $\lim\limits_{s\rightarrow \infty} G(m_s) = G$.  Furthermore, $G$ is the Gramian of a Grassmannian Parseval  frame.\end{prop}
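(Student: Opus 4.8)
The plan is to split the argument into two independent parts: first extracting a convergent subsequence, and then identifying its limit as a Grassmannian Parseval frame. For the first part, I would invoke the compactness of $\M$. Since $\M$ is precisely the set of rank-$K$ orthogonal projections in $\mathbb F^{N\times N}$, it is a closed and bounded subset of a finite-dimensional space, hence sequentially compact. Therefore the sequence $\{G(m)\}_{m=1}^\infty$ admits a subsequence $\{G(m_s)\}_{s=1}^\infty$ converging to some $G \in \M$, which settles the existence claim.

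For the second part, let $\mu^* = \min_{G' \in \M} \mu(G')$, which exists by the continuity of $\mu$ and the compactness of $\M$, and fix once and for all a Gram matrix $G^*$ of a Grassmannian Parseval frame, so that $\mu(G^*) = \mu^*$. The key tool is the two-sided estimate $e^{\eta \mu(H)} \le \Phi_{\mathrm{od}}^\eta(H) \le N(N-1)e^{\eta \mu(H)}$ established in the proof of the preceding proposition, valid for every $H \in \M$ and $\eta > 0$. Because $G(m)$ minimizes $\Phi_{\mathrm{od}}^{\eta_m}$ over $\M$, comparing its potential against that of $G^*$ yields
$$e^{\eta_m \mu(G(m))} \le \Phi_{\mathrm{od}}^{\eta_m}(G(m)) \le \Phi_{\mathrm{od}}^{\eta_m}(G^*) \le N(N-1)e^{\eta_m \mu^*} \, .$$
Taking logarithms and dividing by $\eta_m$ gives $\mu(G(m)) \le \mu^* + \frac{\ln(N(N-1))}{\eta_m}$. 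Since $\mu^*$ is the minimum of $\mu$ over $\M$ and each $G(m) \in \M$, we also have $\mu(G(m)) \ge \mu^*$. As $\eta_m \to \infty$, these two bounds force $\mu(G(m)) \to \mu^*$.

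To finish, I would pass to the convergent subsequence and use the continuity of $\mu$ on $\M$: since $G(m_s) \to G$, we obtain $\mu(G) = \lim_{s \to \infty} \mu(G(m_s)) = \mu^*$. Thus $\mu(G) = \min_{G' \in \M} \mu(G')$, which is exactly the defining property of a Grassmannian Parseval frame.

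I do not anticipate a serious obstacle: the argument is essentially a sandwiching estimate combined with compactness. The only point requiring a little care is to select the comparison matrix $G^*$ independently of $m$, so that the right-hand side of the displayed chain grows at the controlled rate $e^{\eta_m \mu^*}$, and to note that the combinatorial factor $N(N-1)$ contributes only the term $\frac{\ln(N(N-1))}{\eta_m}$, which vanishes after division by $\eta_m$.
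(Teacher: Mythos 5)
Your proof is correct and follows essentially the same route as the paper's: compactness of $\M$ to extract the convergent subsequence, comparison of the minimizer $G(m)$ against a fixed Grassmannian Parseval frame, the two-sided bound $e^{\eta\mu(H)} \le \Phi_{\mathrm{od}}^{\eta}(H) \le N(N-1)e^{\eta\mu(H)}$, and continuity of $\mu$ to identify the limit. Your explicit sandwich $\mu^* \le \mu(G(m)) \le \mu^* + \ln(N(N-1))/\eta_m$ is just a slightly more quantitative phrasing of the paper's limit argument.
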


\begin{proof}
By the compactness of $\M$,  there exists a subsequence $\left\{G(m_s)  \right\}_{s=1}^\infty$ and $G \in \M$ such that  $\lim\limits_{s\rightarrow \infty} G(m_s) = G$.  Now let $\widehat G \in\M$ correspond to any Grassmannian Parseval  frame.  By hypothesis, we have 
$$\Phi_{\mathrm{od}}^{\eta_{m_s}} \left( G(m_s) \right)
\leq
\Phi_{\mathrm{od}}^{\eta_{m_s}} \left( \widehat G \right)$$
for every $s \in \{1,2,3,...\}$.  Furthermore, for every $s \in \{1,2,3,...\}$, we have 
$$e^{\eta_{m_s} \mu(G(m_s))} \le \Phi_{\mathrm{od}}^{\eta_{m_s}} (G(m_s)) \le N(N-1) e^{\eta_{m_s} \mu(G(m_s))}.$$
Thus
$$\mu(G) = \lim\limits_{s\rightarrow \infty} \mu(G(m_s)) = \lim\limits_{s\rightarrow \infty} \frac{1}{\eta_{m_s}} \log\left(\Phi_{\mathrm{od}}^{\eta_{m_s}}(G(m_s))\right)
\leq
\lim\limits_{s\rightarrow \infty} \frac{1}{\eta_{m_s}} \log\left(\Phi_{\mathrm{od}}^{\eta_{m_s}}(\widehat G)\right) = \mu (\widehat G),
$$
where the first equality follows from continuity of the $\max$ function.  This shows that $G$ belongs to a Grassmannian Parseval frame.
\end{proof}

If the off-diagonal sum potential is properly complemented by terms for the diagonal entries of $G$, then
a simple characterization of equiangular Parseval frames can be derived.

\begin{defn}\label{defUsum} 
Let $G = (G_{a,b})_{a,b=1}^N  \in \M$.  Given  $\eta >0$, we define  the \textem{sum potential} of $G$ as
$$
   \Phi_{\mathrm{sum}}^\eta (G) = \Phi_{\mathrm{od}}^\eta (G) + 
   \sum_{j=1}^N e^{- \eta (K^2/N^2 - C_{N,K}^2)} 
   E_{j,j}^{\eta}(G)  \, .
$$
\end{defn}

\begin{prop}
Let $G \in \M$, then
$$
    \Phi_{\mathrm{sum}}^\eta (G) \ge N^2 e^{\eta(K/N^2 - K^2/N^3 + K(N-K))/N^3(N-1)} \, 
$$
and equality holds if and only if $G$ belongs to an equiangular Parseval frame.
\end{prop}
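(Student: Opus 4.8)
The plan is to recognize $\Phi_{\mathrm{sum}}^\eta(G)$ as a sum of exactly $N^2$ strictly positive terms and to apply the arithmetic--geometric mean inequality. Indeed, $\Phi_{\mathrm{od}}^\eta$ contributes the $N(N-1)$ off-diagonal terms $e^{\eta|G_{j,l}|^2}$ with $j\ne l$, while the diagonal sum contributes the $N$ terms $e^{-\eta(K^2/N^2 - C_{N,K}^2)}e^{\eta G_{j,j}^2}$, where I have used that the diagonal entries of a projection are real and nonnegative, so that $E_{j,j}^\eta(G) = e^{\eta G_{j,j}^2}$. By the AM--GM inequality applied to all $N^2$ terms at once,
$$
\Phi_{\mathrm{sum}}^\eta(G) \ge N^2 \Bigl(\prod_{j\ne l} e^{\eta|G_{j,l}|^2} \cdot \prod_{j=1}^N e^{-\eta(K^2/N^2 - C_{N,K}^2)} e^{\eta G_{j,j}^2}\Bigr)^{1/N^2} .
$$

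The crux is to show that this product is \emph{independent of $G$}. First I would pass to logarithms, so that the exponent of the product becomes $\eta\sum_{j\ne l}|G_{j,l}|^2 - \eta N(K^2/N^2 - C_{N,K}^2) + \eta\sum_{j=1}^N G_{j,j}^2$. Since $G$ is a rank-$K$ orthogonal projection, $\sum_{j,l}|G_{j,l}|^2 = tr(G^2) = tr(G) = K$, whence $\sum_{j\ne l}|G_{j,l}|^2 = K - \sum_{j} G_{j,j}^2$. Substituting this identity, the two occurrences of $\sum_j G_{j,j}^2$ cancel, leaving the $G$-independent exponent $\eta K - \eta N(K^2/N^2 - C_{N,K}^2)$. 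Dividing by $N^2$ and using $C_{N,K}^2 = K(N-K)/(N^2(N-1))$, so that $N C_{N,K}^2 = K(N-K)/(N(N-1))$, rearranges the exponent into $\eta(K/N^2 - K^2/N^3 + K(N-K)/(N^3(N-1)))$, which is exactly the exponent in the claimed bound.

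For the equality statement I would invoke the equality condition for AM--GM: equality holds precisely when all $N^2$ terms coincide. This forces $|G_{j,l}|$ to take a common value $c$ for all $j\ne l$ and $G_{j,j}$ to take a common value $d$ for all $j$; the trace condition $\sum_j G_{j,j}=K$ then gives $d = K/N$, while equating an off-diagonal term with a diagonal term, $e^{\eta c^2} = e^{-\eta(K^2/N^2 - C_{N,K}^2)}e^{\eta (K/N)^2}$, gives $c = C_{N,K}$. Thus $G$ has constant diagonal $K/N$ and constant off-diagonal modulus $C_{N,K}$, which is exactly the Gram matrix of an equiangular Parseval frame. Conversely, substituting $G_{j,j}=K/N$ and $|G_{j,l}|=C_{N,K}$ shows every one of the $N^2$ terms equals $e^{\eta C_{N,K}^2}$, so the bound is attained.

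The computation is elementary, and the only point worth flagging is the cancellation of $\sum_j G_{j,j}^2$ in the product: this is engineered jointly by the projection identity $tr(G^2)=tr(G)=K$ and by the calibrating weight $e^{-\eta(K^2/N^2 - C_{N,K}^2)}$ attached to the diagonal terms, which is also what aligns the diagonal with the off-diagonal terms precisely at the equiangular Parseval frame. I expect no genuine obstacle beyond bookkeeping; one should only verify the arithmetic simplifying the final exponent and recall that the diagonal entries are real and nonnegative so that $|G_{j,j}|^2 = G_{j,j}^2$.
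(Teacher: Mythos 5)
Your proof is correct and is essentially the paper's own argument: applying AM--GM to the $N^2$ positive terms is exactly Jensen's inequality for the exponential function with uniform weights (the paper's tool), and your use of the projection identity $tr(G^2)=tr(G)=K$ to make the exponent $G$-independent, together with the all-terms-equal equality analysis yielding $G_{j,j}=K/N$ and $|G_{j,l}|=C_{N,K}$, matches the paper step for step. One cosmetic note: the exponent as displayed in the statement has misplaced parentheses, and your form $\eta\bigl(K/N^2 - K^2/N^3 + K(N-K)/(N^3(N-1))\bigr)$ is the intended one.
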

\begin{proof}
We use Jensen's inequality to obtain
$$
  \Phi_{\mathrm{sum}}^\eta (G) =   \sum_{j,l=1}^N e^{\eta |G_{j,l}|^2 - \delta _{j,l} (K^2/N^2 - C_{N,K}^2)} 
     \ge N^2 e^{(\eta/N^2) \sum_{j,l=1}^N (|G_{j,l}|^2 - \eta \delta _{j,l} (K^2/N^2 - C_{N,K}^2))} \, .
$$
Now using the Parseval property gives  $\sum_{j,l=1}^N |G_{j,l}|^2 = K $ and thus
$$
    \Phi_{\mathrm{sum}}^\eta (G) \ge N^2 e^{\eta(K/N^2 - K^2/N^3 + K(N-K))/N^3(N-1)} \, .
$$
Equality holds in Jensen's equality if and only if the average is over a constant. This implies that
the diagonal entries equal $G_{j,j}=K/N$ and the magnitude of the off-diagonal entries equals $C_{N,K}$.
\end{proof}

\jh{To conclude this section, we show that equidistributed frames can be characterized in terms of families of frame potentials based on exponential ones. To prepare this, we
introduce the notion of 
a frame being $\a$-equipartitioned.}

\begin{defn}\label{defequipart}
Let $\mathcal{F} = \{ f_j \}_{j=1}^N$ be an $(N, K)$-frame and fix $\a \in (0, \infty)$.  For any $x \in \Zn$, define $A_x^\a : = \sum\limits_{j \in \Zn} e^{\a |\langle f_j, f_x \rangle|^2}$.  If  $A_x^\a = A_y^\a$ for all $x, y \in \Zn$, then we say that $\mathcal{F}$ is \textem{$\a$-equipartitioned}. \end{defn}

\begin{prop}\label{propequipartfonopen}
Let $G= (G_{j,l})_{j,l=1}^N$ be the Gramian of an $(N,K)$-frame $\mathcal{F}$, and let $I \subseteq (0, \infty)$ be any open interval, then  $G$ is equidistributed if and only $G$ is $\alpha$-equipartitioned for all $\a \in I$.
\end{prop}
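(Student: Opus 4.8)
The plan is to reduce the statement to an equality of multisets of entry magnitudes, column by column, and to read the equipartition hypothesis as an identity between finite sums of exponentials that can be resolved by analytic continuation. Throughout I write $a_j = |G_{j,x}|^2$ and $b_j = |G_{j,y}|^2$ for a fixed pair $x,y \in \Zn$, noting that $A_x^\a = \sum_{j\in\Zn} e^{\a a_j}$ and $A_y^\a = \sum_{j\in\Zn} e^{\a b_j}$.

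The forward direction I expect to be routine. If $G$ is equidistributed, then for any $x,y$ there is a permutation $\pi$ of $\Zn$ with $|G_{j,x}| = |G_{\pi(j),y}|$ for all $j$, so the multisets $\{a_j\}_j$ and $\{b_j\}_j$ coincide; reindexing the sum defining $A_x^\a$ by $\pi$ gives $A_x^\a = A_y^\a$ for \emph{every} $\a>0$, in particular for all $\a \in I$, and $G$ is $\a$-equipartitioned on $I$.

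For the reverse direction I would fix $x,y$ and consider $F(\a) := \sum_{j} e^{\a a_j} - \sum_{j} e^{\a b_j}$, which by hypothesis vanishes on the open interval $I$. The first key step is that $F$, being a finite linear combination of exponentials, extends to an entire function of $\a$; hence by the identity theorem its vanishing on $I$ forces $F \equiv 0$ on all of $\R$. This is the step I view as the main obstacle, because it is what lets an assumption on an \emph{arbitrarily small} interval $I$ propagate to the full range of parameters — so emphasizing the real-analyticity (indeed entireness) of $\a \mapsto A_x^\a$ is essential.

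The remaining step is to recover the multiset from the exponential identity. I would collect the distinct values among $\{a_j\}\cup\{b_j\}$ as $c_1 > c_2 > \cdots > c_m$, let $p_k$ and $q_k$ be the multiplicities of $c_k$ among the $a_j$ and $b_j$ respectively, and rewrite $F \equiv 0$ as $\sum_{k=1}^m (p_k - q_k)\, e^{\a c_k} = 0$ for all $\a$. Dividing by $e^{\a c_1}$ and letting $\a \to +\infty$ forces $p_1 = q_1$; subtracting that term and iterating gives $p_k = q_k$ for every $k$ (equivalently, one invokes the linear independence of the distinct exponentials $\a\mapsto e^{\a c_k}$). Thus $\{a_j\}$ and $\{b_j\}$ agree as multisets, and since $t \mapsto t^2$ is injective on $[0,\infty)$ the magnitude multisets $\{|G_{j,x}|\}_j$ and $\{|G_{j,y}|\}_j$ coincide. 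Any permutation $\pi$ realizing this equality satisfies $|G_{j,x}| = |G_{\pi(j),y}|$ for all $j$; as $x,y$ were arbitrary, $G$ is equidistributed, completing the equivalence.
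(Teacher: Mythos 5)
Your proof is correct and follows essentially the same route as the paper's: both directions hinge on the real-analyticity of $\a \mapsto \sum_j e^{\a |G_{j,x}|^2}$ to propagate the identity from the open interval $I$ to all $\a$, followed by an $\a \to \infty$ argument that iteratively isolates the dominant exponential to match magnitudes together with their multiplicities. Your elimination step (dividing by $e^{\a c_1}$ and letting $\a \to \infty$ to get $p_1 = q_1$, then iterating) is a slightly tidier formalization of the paper's ``remove the maximal-growth terms and repeat'' argument, but it is the same idea.
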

\begin{proof}
If $G$ is equidistributed, the magnitudes of every column are the same as those of any other column, up to permutation.  Thus, by definition of $\alpha$-equipartitioning, it is trivial to verify that then $G$ is $\alpha$-equipartitioned for all $\a \in I$.

Conversely, consider for each $x \in \Zn$ the function $f_x: (0, \infty) \rightarrow \R : \alpha \mapsto \sum\limits_{j \in \Zn} e^{\a | G_{x,j} |^2}$.  Let $x,y \in \Zn$ be arbitrary.  If $f_x(\a) = f_y(\a)$ for all $\a \in \R$ then  since $f_x$ and $f_y$ are both analytic functions which agree on an open interval, it follows by the principle of analytic continuation that they must agree on all of $(0, \infty)$.  In particular, this means that $\sum\limits_{j \in \Zn} e^{\a | G_{x,j} |^2} = \sum\limits_{j \in \Zn} e^{\a | G_{y,j} |^2}$ for all $\a \in (0, \infty)$.  Thus, 
$$
\lim\limits_{\a \rightarrow \infty} \frac{1}{\alpha} \log
\left(\sum\limits_{j \in \Zn} e^{\a | G_{x,j} |^2}\right)
= \lim\limits_{\a \rightarrow \infty}
\frac{1}{\alpha} \log \left(
 \sum\limits_{j \in \Zn} e^{\a | G_{y,j} |^2} \right) \, .
$$
  If the maximum magnitude in row $x$ is not equal to the maximum magnitude of row $y$, then this equation cannot hold.  Similarly, if these maximal magnitudes did not occur with the same multiplicity in each column, then again the equation would not be possible.  Thus, we can  remove the 
  index sets $M_x$ and $M_y$ corresponding to the maximal magnitudes in rows $x$ and $y$ 
from the  sum in the definition of $f_x$ and $f_y$   
  to obtain the new 
  identity
$$
   \sum\limits_{j \in \Zn\setminus M_x} e^{\a | G_{x,j} |^2} = \sum\limits_{j \in \Zn\setminus M_y} e^{\a | G_{y,j} |^2}
  $$ 
   for all $\alpha \in (0,\infty)$. Repeating the procedure of isolating the strongest growth rate
shows that every possible magnitude that appears in row $x$ must agree in multiplicity with every possible magnitude that appears in row $y$.  In other words, the magnitudes in row $x$ are just a permutation of those in row $y$.  Since $x$ and $y$ were arbitrary, we conclude that $G$ is equidistributed.
\end{proof}

\section{The Gradient Descent on $\M$}
In this section, we first show that following a gradient descent 
associated with a real analytic frame potential
always converges to a critical point.  This depends heavily on
 results by {\L}ojasiewicz.  To apply these results, we use that when $\mathbb F= \mathbb C$ (respectively $\mathbb F= \mathbb R$), the manifold $\M$ is embedded 
in the (linear) manifold of Hermitian (respectively symmetric) $N\times N$ matrices equipped with the Hilbert-Schmidt norm,
which induces a topology on $\M$ generated by the open balls $B(X,\sigma)=\{Y \in \M: \|Y-X\|<\sigma\}$
of radius $\sigma>0$ centered at each $X \in \M$. 
Moreover, the Hilbert-Schmidt norm induces a Riemannian structure
on the tangent space $T\M$.  Via the embedding, the tangent space 
$T_{G_0}\M$ at $G_0 \in \M$ is identified with a subspace of the Hermitian (respectively symmetric) matrices,
and the Riemannian metric is the real inner product $(X,Y) \mapsto X \cdot Y \equiv tr(XY)=tr(XY^*)$
restricted to the tangent space. 

We also recall that the gradient  of a differentiable function $F$ on $\M$ is the vector field $\nabla F$ 
which satisfies
the identity 
$$
    \nabla F(G_0) \cdot  X = \frac{d}{dt} \vert_{t=0} F(\gamma (t))
$$
for each $G_0 \in \M$ and each curve $\gamma \in C^1(\mathbb R, \M)$ with
$\gamma(0)=G_0$ and $\frac{d\gamma(0)}{dt} = X$. 

The frame potentials we
have defined on $\M$ are all given in terms of real analytic functions of matrix entries. 

\begin{thm}
(\cite{kurd:loj}, \cite{loj:ensanal}; see also \cite{merl:gradglows})
Let $\Omega$ be an open subset of $\mathbb R^d$ and
$F: \Omega \rightarrow \R$ real analytic. For any $x \in \Omega$
there exist $C,\sigma>0$ and $\theta \in (0,1/2]$ such that
for all $y \in B(x,\sigma)\cap \Omega$,
$$
   | F(y) - F(x)|^{1-\theta} \le C \| \nabla F(y) \| \, .
$$
\end{thm}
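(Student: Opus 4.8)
The plan is to reduce the statement to the critical case and then reduce the analytic function to a monomial normal form. First I would translate so that $x=0$ and, replacing $F$ by $F-F(x)$, assume $F(0)=0$; neither the gradient nor the difference $|F(y)-F(x)|$ is affected. If $\nabla F(0)\neq 0$, then by continuity $\|\nabla F(y)\|$ is bounded below by a positive constant on a small ball $B(0,\sigma)$, while $|F(y)|^{1-\theta}$ is bounded above there, so the inequality holds for any $\theta\in(0,1/2]$ once $C$ is large. Thus the entire difficulty is concentrated at a critical point, where $F(0)=0$ and $\nabla F(0)=0$.

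For the critical case the key tool I would invoke is Hironaka's resolution of singularities in its local real-analytic form (as developed by Bierstone and Milman): there is a proper, surjective real analytic map $\pi\colon \widetilde U \to U$, a finite composition of blow-ups along smooth centers, such that near each point of the compact fiber $\pi^{-1}(0)$ one can choose coordinates $u=(u_1,\dots,u_d)$ in which the pullback has the normal-crossings form
$$
   F\circ \pi(u) = u_1^{a_1}\cdots u_d^{a_d}\, W(u),
$$
with multi-index $(a_1,\dots,a_d)$ and $W$ a nonvanishing analytic unit. The virtue of this normal form is that the gradient inequality becomes an elementary monomial estimate: since $\partial_{u_i}(F\circ\pi)=\bigl(a_i/u_i+(\partial_{u_i}W)/W\bigr)\,(F\circ\pi)$, one computes $\|\nabla(F\circ\pi)(u)\|\ge c\,|F\circ\pi(u)|^{1-\theta}$ on a neighborhood of each fiber point, where $\theta=1/(a_1+\cdots+a_d)$ emerges from bounding the dominant term $a_m/|u_m|$ against the product $\prod_j|u_j|^{\theta a_j}$ at the smallest coordinate. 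A finite subcover of $\pi^{-1}(0)$ together with a partition of unity then upgrades this to $\|\nabla(F\circ\pi)\|\ge c\,|F\circ\pi|^{1-\theta}$ on all of $\pi^{-1}(B(0,\sigma))$ for some $\sigma>0$.

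It remains to push the inequality back down through $\pi$. By the chain rule $\nabla(F\circ\pi)(u)=(D\pi(u))^{\!\top}(\nabla F)(\pi(u))$, so $\|\nabla(F\circ\pi)(u)\|\le\|D\pi(u)\|\,\|\nabla F(\pi(u))\|$, while $|F\circ\pi(u)|=|F(\pi(u))|$ by definition. Combining these with the resolved inequality gives $\|\nabla F(\pi(u))\|\ge (c/\|D\pi(u)\|)\,|F(\pi(u))|^{1-\theta}$; bounding $\|D\pi\|$ above on the compact set $\pi^{-1}(\overline{B(0,\sigma)})$ and using surjectivity of $\pi$ to reach every $y\in B(0,\sigma)$ yields the claimed inequality for $F$. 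Finally, since $|F(y)|<1$ near $0$, decreasing $\theta$ only decreases the left-hand side $|F|^{1-\theta}$, so the inequality persists under such a decrease and I can arrange $\theta\in(0,1/2]$.

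I expect the main obstacle to be precisely this descent step: the differential $D\pi$ degenerates along the exceptional divisor, so the naive bound $\|\nabla F\|\ge(c/\|D\pi\|)|F|^{1-\theta}$ risks being vacuous exactly where $D\pi$ vanishes. Controlling it requires that the order of vanishing of the Jacobian of $\pi$ is dominated by the monomial exponents $a_i$ produced by the resolution, a compatibility that the resolution algorithm guarantees but that must be tracked through the successive blow-ups. An alternative route that avoids resolution is to apply {\L}ojasiewicz's comparison inequality, namely that $\{f=0\}\subseteq\{g=0\}$ for analytic $f,g$ on a compact set forces $|g|^{M}\le C|f|$, to $f=\|\nabla F\|^2$ together with the curve selection lemma; there the analogous obstacle is controlling the behaviour of $F$ along the selected analytic arcs and near the critical value.
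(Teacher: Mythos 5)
The paper itself contains no proof of this theorem: it is the classical {\L}ojasiewicz gradient inequality, imported by citation (to {\L}ojasiewicz and to Kurdyka--Parusi\'nski), and the only thing the paper proves is its transfer to the manifold setting in Corollary~\ref{thmLojsat}. So your proposal has to be judged against the literature rather than against an internal argument, and on that score it is essentially correct: the route you describe --- reduce to a critical point, monomialize $F$ by a proper surjective real-analytic resolution $\pi$ so that locally $F\circ\pi = u^{a}W(u)$ with $W$ a unit, prove the elementary estimate $\|\nabla(F\circ\pi)(u)\| \ge c\,|F\circ\pi(u)|^{1-\theta}$ with $\theta = 1/(a_1+\cdots+a_d)$, use compactness of the fiber $\pi^{-1}(0)$ to get uniform constants, and descend through $\pi$ by the chain rule --- is precisely the known resolution-of-singularities proof (in the style of Bierstone--Milman, written out geometrically by Feehan). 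The final adjustment of $\theta$ into $(0,1/2]$ via $|F|<1$ near the base point is also handled correctly.

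Two corrections, both in your favor. First, the ``main obstacle'' you flag at the end is not an obstacle at all: the chain rule gives $\nabla(F\circ\pi)(u) = (D\pi(u))^{\top}\nabla F(\pi(u))$, hence $\|\nabla(F\circ\pi)(u)\| \le \|D\pi(u)\|\,\|\nabla F(\pi(u))\|$, and combining this with the upstairs estimate requires only an \emph{upper} bound on $\|D\pi\|$ over the compact set $\pi^{-1}\bigl(\overline{B(0,\sigma)}\bigr)$, which properness and continuity provide for free. Where $D\pi$ degenerates the derived lower bound on $\|\nabla F\|$ only gets stronger, and at a point where $D\pi(u)=0$ exactly, the upstairs inequality forces $F(\pi(u))=0$, so the downstairs inequality is trivially true there. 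No tracking of the order of vanishing of the Jacobian against the exponents $a_i$ is needed; that kind of bookkeeping matters only for sharper statements (e.g.\ optimal exponents or inequalities in the reverse direction), not for the one claimed. Second, the partition of unity is superfluous: a finite subcover of $\pi^{-1}(0)$, a $\sigma$ small enough that $\pi^{-1}(B(0,\sigma))$ lies in the union of those finitely many charts (properness again, via a subsequence argument), and the worst constant and smallest exponent over the charts already give the uniform upstairs inequality, with the same $|F\circ\pi|<1$ observation justifying the passage to the smallest exponent.
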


\begin{cor}\label{thmLojsat}
Let $\mathcal M$ be a $d$-dimensional real analytic manifold with a Riemannian structure.
Let $G_0 \in \Omega \subset M$ and let $W: \Omega \to \mathbb R$
be real analytic, then there exist
an open neighborhood $\mathcal U$ of $G_0$ in $\Omega$
 and constants $C>0$ and $\theta \in (0,1/2]$
 such that for all $G \in \mathcal U$,
 $$
   | W(G)-W(G_0) |^{1-\theta} \le C \|\nabla W(G) \| \, .
 $$
   \end{cor}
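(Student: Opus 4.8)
The plan is to reduce the statement to the preceding theorem by localizing through an analytic coordinate chart. Since $\mathcal M$ is a $d$-dimensional real analytic manifold, I would choose a real analytic chart $\phi : \mathcal V \to \Omega' \subseteq \mathbb R^d$ with $G_0 \in \mathcal V \subseteq \Omega$, both $\mathcal V$ and $\Omega'$ open, and $\phi(G_0) = x_0$. Because $W$ is real analytic on $\Omega$ and $\phi^{-1}$ is real analytic, the composition $F := W \circ \phi^{-1} : \Omega' \to \R$ is again real analytic, so the theorem applies to $F$ at $x_0$ and supplies constants $C_0, \sigma_0 > 0$ and $\theta \in (0, 1/2]$ with $|F(x) - F(x_0)|^{1-\theta} \le C_0 \| \nabla_{\mathrm{e}} F(x) \|$ for all $x \in B(x_0, \sigma_0) \cap \Omega'$, where $\nabla_{\mathrm{e}}$ denotes the ordinary Euclidean gradient on $\mathbb R^d$. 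Writing $G = \phi^{-1}(x)$, the left-hand side is exactly $|W(G) - W(G_0)|^{1-\theta}$, so the only remaining task is to bound $\|\nabla_{\mathrm{e}} F(x)\|$ by a constant multiple of the intrinsic Riemannian gradient norm $\|\nabla W(G)\|$.

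For the gradient comparison I would express everything in coordinates. Let $g_{ij}(x)$ be the components of the Riemannian metric pulled back via $\phi^{-1}$ and $g^{ij}(x)$ the entries of the inverse matrix. The Riemannian gradient has components $(\nabla W)^i = g^{ij} \partial_j F$, and raising and lowering indices gives $\|\nabla W(G)\|^2 = g^{ij}(x)\, \partial_i F(x)\, \partial_j F(x)$, whereas $\|\nabla_{\mathrm{e}} F(x)\|^2 = \sum_i (\partial_i F(x))^2$. Next I would shrink the chart, fixing $\sigma \in (0, \sigma_0]$ small enough that the closed ball $\overline{B(x_0, \sigma)}$ lies in $\Omega'$. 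Since the metric is continuous and positive definite, the symmetric matrices $(g^{ij}(x))$ have eigenvalues bounded below by some $\lambda > 0$ uniformly over the compact set $\overline{B(x_0, \sigma)}$, so $\|\nabla W(G)\|^2 \ge \lambda \|\nabla_{\mathrm{e}} F(x)\|^2$, that is, $\|\nabla_{\mathrm{e}} F(x)\| \le \lambda^{-1/2} \|\nabla W(G)\|$ for every $x \in B(x_0, \sigma)$.

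Combining the two estimates yields $|W(G) - W(G_0)|^{1-\theta} \le C_0 \lambda^{-1/2} \|\nabla W(G)\|$ for all $G = \phi^{-1}(x)$ with $x \in B(x_0,\sigma)$; setting $C := C_0 \lambda^{-1/2}$ and $\mathcal U := \phi^{-1}(B(x_0, \sigma))$, which is an open neighborhood of $G_0$ since $\phi$ is a homeomorphism, establishes the claim with the same exponent $\theta$. The step needing the most care is the gradient comparison: I must confirm both that $F$ inherits real analyticity through the analytic chart, so that the theorem genuinely applies, and that the metric tensor, being continuous and positive definite, admits uniform spectral bounds on a compact coordinate neighborhood. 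This spectral bound is precisely what converts the Euclidean {\L}ojasiewicz inequality in coordinates into the intrinsic Riemannian one. I would note that real analyticity of the metric is not required here, as continuity already suffices for the eigenvalue bound, and that in the application to $\M$ the metric is in any case the real analytic restriction of the Hilbert--Schmidt inner product.
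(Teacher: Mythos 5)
Your proof is correct and follows essentially the same route as the paper's: localize through a real analytic chart, apply the Euclidean {\L}ojasiewicz inequality to $F = W \circ \phi^{-1}$, and then convert the Euclidean gradient bound into the intrinsic Riemannian one by comparing the two gradient norms on a coordinate neighborhood. If anything, your version of the comparison step --- a uniform spectral lower bound for $(g^{ij})$ on a compact closed ball --- is more carefully justified than the paper's, which appeals to continuity of the metric together with paracompactness of $B(x,\sigma)\cap U$ where compactness is what is actually needed.
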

   \begin{proof}
Since the manifold is real analytic, after choosing a chart 
$\Gamma: \mathcal M \to \mathbb R^d$, there exists a
neighborhood $U$ of $x=\Gamma(G_0)$
in $\mathbb R^d$ such that 
$
  F= W \circ \Gamma^{-1}
$
  is a real analytic function on $U$.
  Thus, the {\L}ojasiewicz inequality gives
  a bound for the values of $F$
  in terms of the Euclidean gradient $\nabla F$
  in a set $B(x,\sigma) \cap U$.  
  However, $\Gamma$ is a diffeomorphism,
  thus by the continuity of the matrix-valued function obtained
  from applying the Riemannian metric to pairs 
  of the coordinate vector fields $\{\frac{\partial}{\partial x_j}\}_{j=1}^d$
  and by the fact that $B(x,\sigma)\cap U$ is paracompact in ${\mathbb R}^d$, 
  there exists $C'>0$ such that
  $\|\nabla F(\Gamma(G)\| \le C' \|\nabla W(G)\|$ if $\Gamma(G) \in B(x,\sigma)\cap U$.
  The combination of the {\L}ojasiewicz inequality in local coordinates
  with this norm inequality gives the claimed bound, valid in the neighborhood
  ${\mathcal U} = \Gamma^{-1}(B(x,\sigma)\cap U)$ of $G_0$.
\end{proof}

\subsection{Convergence of the gradient descent}

It is well known that the {\L}ojasiewicz inequality can be used to prove convergence of gradient flows induced by analytic cost functions on $\mathbb R^d$.  Here we provide a proof of convergence in our setting adapted from \cite{merl:gradglows}.

\begin{prop}\label{propconvergence}
Suppose that $W: \M \rightarrow \R$ is real analytic and let $\gamma$ be a global solution of the descent system $\dot{\gamma} = -\grad W(\gamma)$.  Then there is an element $G_0 \in \M$ such that $\gamma(t) \rightarrow G_0$ as $t \rightarrow\infty$ and $\grad W(G_0)=0$. 
\end{prop}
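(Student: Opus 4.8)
The plan is to show that the gradient trajectory has finite arc length, from which convergence is immediate, and then to identify the limit as a critical point. First I would record that $W$ is non-increasing along the flow: along $\gamma$ we have $\frac{d}{dt} W(\gamma(t)) = \grad W(\gamma(t)) \cdot \dot{\gamma}(t) = -\norm{\grad W(\gamma(t))}^2 \le 0$. Since $\M$ is compact, $W$ is bounded below on it, so $W(\gamma(t))$ decreases monotonically to a limit $W_\infty$ as $t \to \infty$. Compactness also supplies an $\omega$-limit point: there is a sequence $t_n \to \infty$ and a point $G_0 \in \M$ with $\gamma(t_n) \to G_0$, and by continuity $W(G_0) = W_\infty$. (If $W(\gamma(t_\ast)) = W_\infty$ at a finite time, then $W$ is constant thereafter, forcing $\grad W(\gamma(t)) = 0$ and a stationary trajectory, so the conclusion is trivial; otherwise $W(\gamma(t)) > W_\infty$ for all $t$.)

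The heart of the argument is Corollary~\ref{thmLojsat}, applied at $G_0$: there are a neighborhood $\mathcal{U}$ of $G_0$, a constant $C > 0$, and $\theta \in (0, 1/2]$ with $\abs{W(G) - W_\infty}^{1-\theta} \le C \norm{\grad W(G)}$ for all $G \in \mathcal{U}$. I would then study the auxiliary function $\phi(t) = (W(\gamma(t)) - W_\infty)^{\theta}$, which is nonnegative since $W(\gamma(t)) \ge W_\infty$. A direct computation gives $-\dot{\phi}(t) = \theta (W(\gamma(t)) - W_\infty)^{\theta-1} \norm{\grad W(\gamma(t))}^2$, and substituting the {\L}ojasiewicz bound in the form $(W(\gamma(t)) - W_\infty)^{\theta - 1} \ge \frac{1}{C \norm{\grad W(\gamma(t))}}$ yields $-\dot{\phi}(t) \ge \frac{\theta}{C} \norm{\grad W(\gamma(t))} = \frac{\theta}{C} \norm{\dot{\gamma}(t)}$, valid whenever $\gamma(t) \in \mathcal{U}$. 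Integrating this differential inequality bounds the length $\int \norm{\dot{\gamma}(t)} \, dt$ by $\frac{C}{\theta}$ times a difference of values of $\phi$, which is finite.

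The main obstacle, and really the only delicate point, is justifying that the trajectory never escapes the neighborhood $\mathcal{U}$ on which the inequality holds, since the length estimate is only available there. I would resolve this by a standard bootstrap argument. Fixing a ball $B(G_0, \rho) \subseteq \mathcal{U}$, I would use the subsequential convergence $\gamma(t_n) \to G_0$ together with $\phi(t) \to 0$ to choose a time $t_0$ with $\gamma(t_0) \in B(G_0, \rho/2)$ and $\frac{C}{\theta} \phi(t_0) < \rho/2$. Letting $T$ be the supremum of times up to which $\gamma$ remains in $B(G_0, \rho)$, the length estimate on $[t_0, T)$ forces $\norm{\gamma(t) - \gamma(t_0)} \le \frac{C}{\theta} \phi(t_0) < \rho/2$, hence $\norm{\gamma(t) - G_0} < \rho$ throughout; since this keeps $\gamma$ strictly inside the ball, continuity rules out $T$ being finite, so $T = \infty$ and the trajectory stays in $B(G_0, \rho)$ for all $t \ge t_0$.

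Finally I would conclude. The global length bound $\int_{t_0}^{\infty} \norm{\dot{\gamma}(t)} \, dt < \infty$ shows that $\gamma(t)$ is Cauchy as $t \to \infty$, hence convergent; because a subsequence tends to $G_0$, the full limit equals $G_0$. For criticality, convergence of $\gamma$ gives $\norm{\dot{\gamma}(t)} = \norm{\grad W(\gamma(t))} \to \norm{\grad W(G_0)}$ by continuity of $\grad W$, while the finite length forces $\liminf_{t \to \infty} \norm{\dot{\gamma}(t)} = 0$; together these give $\grad W(G_0) = 0$, as claimed.
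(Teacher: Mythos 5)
Your proposal is correct and follows essentially the same route as the paper: energy monotonicity, a subsequential limit from compactness of $\M$, the {\L}ojasiewicz inequality from Corollary~\ref{thmLojsat}, an exit-time bootstrap to keep the trajectory in the good neighborhood, and a finite-length estimate giving convergence. Your function $\phi(t) = (W(\gamma(t)) - W_\infty)^{\theta}$ is exactly the paper's integral $\int_0^{W(\gamma(t))} C s^{\theta-1}\, ds$ in closed form (up to the constant $C/\theta$ and the normalization $W_\infty = 0$), and your only addition is making explicit the final step that finite length plus continuity of $\grad W$ forces $\grad W(G_0) = 0$, which the paper leaves implicit.
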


\begin{proof}
First, we observe that $W(\gam(t))$ is a nonincreasing function, since
\begin{align*}
\ddt W( \gam (t) ) &=  \grad W( \gam(t)) \cdot \dot{\gam}(t), \\
&=   - \grad W( \gam(t)) \cdot \grad W( \gam(t)) \\
&= - \| \grad W(\gam (t) )\|^2 \\
&\leq 0.
\end{align*}

Furthermore, since $\M$ is compact, there must some point $G_0 \in \M$ along with an increasing sequence $t_n$ in $\mathbb R$, $t_n \to \infty$, which satisfies that $\gam( t_n) \rightarrow G_0$.   Thus, the continuity of $W$ together with the fact that $t \mapsto W(\gamma(t))$ is nonincreasing
implies that $\lim\limits_{t \rightarrow \infty} W(\gam(t)) = W(G_0)$.

 Since adding a constant to our energy function will not alter the gradient flow, let us assume without loss of generality that $W(G_0) = 0$ and $W(\gam(t)) \geq 0$ for all $t \geq 0$.  

If $W(\gam(t)) = 0$ for some $t_0 \geq 0$, then it follows that $W(\gam(t)) = 0$ for all $t \geq t_0$.  In particular, since $\| \grad W (\gam (t)) \|^2 = - \ddt W( \gam (t)) =0$, we have $ \dot{\gam} (t) = \grad W (\gam(t)) =0$ for all $t \geq 0$.  In this case, the proof is complete.

        Henceforth, we will consider the case where $W(\gam (t)) > 0$ for all $t \geq 0$.  Due to Corollary~\ref{thmLojsat}, we know that since $W$ is real analytic in some neighborhood of $G_0$, it follows that there exist $C, \sigma > 0$ and $\theta \in (0, 1/2]$ such that
$$|W(\gam(t)) - W(G_0)|^{1 - \theta} = |W(\gam(t)) |^{1 - \theta}   \leq C \| \grad W (\gam (t)) \|$$
for all $t \geq 0$ where $\gam(t) \in B(G_0; \sigma) \cap \M$. Let $\epsilon \in (0, \sigma)$.  Then there exists a sufficiently large $t_0 \in \R_+$ that yields 
$$ \int\limits_{0}^{W(\gam(t_0))} \frac{C}{ s ^{1 - \theta}} ds  + \| \gam(t_0) - G_0 \| < \epsilon.$$
Setting $t_1 = \inf\{ t \geq t_0 : \| \gamma(t) - G_0 \| \geq \epsilon \},$ we note that the {\L}ojasiewicz inequality is satisfied for $t \in [t_0, t_1)$, which gives us
\begin{equation*}\label{eqLojas}- \ddt  \int\limits_{0}^{W(\gam(t))} \frac{C}{ s ^{1 - \theta}} ds 
= C \frac{- \ddt W(\gam(t)) }{|W(\gam(t))|^{1 - \theta}} 
= C \frac{\| \grad W(\gam(t))\|^2 }{|W(\gam(t))|^{1 - \theta}} \geq \\ \| \grad W(\gam(t)) \| = \| \dot{\gam} (t) \| . 
\end{equation*}
Since this inequality holds for any $t \in [t_0, t_1)$, it follows 
by integrating both sides
that for any $t \in [t_0, t_1]$ we have
\begin{align*}
\| \gam (t) - G_0 \| &\leq \| \gam (t) - \gam(t_0) \|  + \| \gam (t_0) - G_0 \| \\
&\leq \int\limits_{t_0}^{t_1} \| \dot{\gam} (s) \| ds + \| \gam (t_0) - G_0 \| \\
&\leq C \int\limits_{t_0}^{t_1} \frac{\| \grad W(\gam(s))\|^2 }{|W(\gam(s))|^{1 - \theta}} ds  + \| \gam (t_0) - G_0 \| \\
&=  - C\int\limits_{W(\gam(t_0))}^{W(\gam(t_1))} \frac{ dv}{v^{1 - \theta}}   + \| \gam (t_0) - G_0 \| \\
&\leq C \int\limits_{0}^{W(\gam(t_0))} \frac{ dv}{v^{1 - \theta}}   + \| \gam (t_0) - G_0 \| \\
&< \epsilon \, .
\end{align*}
This shows that $t_1 = + \infty$, so that 
\begin{align*}  \int\limits_{0}^{\infty} \| \dot{\gam}(t) \| dt   &\leq  C \int\limits_{0}^{\infty} \frac{\| \grad W(\gam(t))\|^2 }{|W(\gam(t))|^{1 - \theta}} dt  
= C \int\limits^{W(\gam(0))}_{0} \frac{dv}{v^{1- \theta}}  
< \infty \, .\end{align*}
Thus, we see that $\| \dot{\gam}(t) \| \in L^1 (\R_+)$, and conclude that $\gam(t) \rightarrow G_0$ as $t \rightarrow \infty$.
\end{proof}

\subsection{Characterization of fixed points for the gradient flow}

We recall that when $\mathbb F = \mathbb C$ (respectively $\mathbb F = \mathbb R$),  the embedding of $\M$ into the real vector space of Hermitian (respectively symmetric) $N\times N$ 
matrices induces a similar embedding of the tangent space to $\M$ at $G_0$,
$$
 T_{G_0}\M = \{ \dot\gamma(0): \gamma \in C^1(\mathbb R, \M), \gamma(0)=G_0 \}
 \subset {\mathbb F}^{N\times N}
$$
where $\dot\gamma$ is the (matrix-valued) derivative of $\gamma$. 
We use this embedding to compute gradients and characterize where the gradient vanishes.

\begin{lemma}\label{lemdimTG0} 
Let $G_0 \in \M$, then the real linear map
\begin{align*}
   P_{G_0}: {\mathbb F}^{N\times N} &\to {\mathbb F}^{N\times N}\\
              X &\mapsto (I-G_0)X G_0 +G_0 X^*(I-G_0)
\end{align*}
is the orthogonal projection onto $T_{G_0}\M$.
\end{lemma}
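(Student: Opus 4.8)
The plan is to establish the three characterizing properties of an orthogonal projection onto $T_{G_0}\M$: that $P_{G_0}$ is idempotent, that it is self-adjoint for the real Hilbert-Schmidt inner product, and that its range is exactly $T_{G_0}\M$. All of the algebra will rest on the three relations $G_0^2 = G_0$, $G_0^* = G_0$, and $(I - G_0)G_0 = G_0(I-G_0) = 0$, which I will invoke repeatedly.

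First I would pin down the tangent space concretely. Differentiating the defining relations of $\M$ along a curve $\gamma$ with $\gamma(0) = G_0$ and $\dot\gamma(0) = X$ gives $X = X^*$ from $G^* = G$ and $XG_0 + G_0 X = X$ from $G^2 = G$; the trace condition $\mathrm{tr}(X) = 0$ then follows automatically. Sandwiching $XG_0 + G_0 X = X$ between $G_0$ and $I - G_0$ shows this relation is equivalent to $G_0 X G_0 = 0$ and $(I-G_0)X(I-G_0) = 0$, so, relative to the orthogonal splitting $\mathbb F^N = \mathrm{ran}(G_0) \oplus \ker(G_0)$, the tangent vectors are exactly the Hermitian (symmetric) matrices whose two diagonal blocks vanish. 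This gives $T_{G_0}\M \subseteq S := \{X = X^* : G_0 X G_0 = (I-G_0)X(I-G_0) = 0\}$; for the reverse inclusion I would exhibit, for each $X \in S$, the conjugation curve $\gamma(t) = e^{tA}G_0 e^{-tA}$ with the anti-Hermitian matrix $A = (I-G_0)XG_0 - G_0 X(I-G_0)$, which stays in $\M$ and satisfies $\dot\gamma(0) = [A, G_0] = X$. Hence $T_{G_0}\M = S$.

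With the tangent space identified, the remaining steps are bookkeeping. A short computation shows $P_{G_0}(X)^* = P_{G_0}(X)$, so $P_{G_0}$ maps into the Hermitian matrices and is a well-defined operator on the ambient real inner product space. Expanding $P_{G_0}(P_{G_0}(X))$ and discarding every cross term (each carries a factor $(I-G_0)G_0 = 0$) collapses the expression back to $P_{G_0}(X)$, giving idempotence. Checking $G_0 P_{G_0}(X) G_0 = 0$ and $(I-G_0)P_{G_0}(X)(I-G_0) = 0$ places the range inside $S = T_{G_0}\M$, while for $Y \in T_{G_0}\M$ the block relations give $Y = (I-G_0)YG_0 + G_0 Y(I-G_0) = P_{G_0}(Y)$, so $P_{G_0}$ fixes $T_{G_0}\M$ pointwise and the range is exactly $T_{G_0}\M$. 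Finally, for Hermitian $X, Y$ the inner product is $\mathrm{tr}(XY)$, and cycling the trace in $\mathrm{tr}(P_{G_0}(X)Y)$ moves the factors $G_0$ and $I-G_0$ off the $X$-slot and onto the $Y$-slot, yielding $\mathrm{tr}(X\, P_{G_0}(Y))$; this is self-adjointness, and together with idempotence and the range computation it identifies $P_{G_0}$ as the orthogonal projection onto $T_{G_0}\M$.

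The main obstacle is the reverse inclusion in the tangent-space step: showing that every Hermitian matrix with vanishing diagonal blocks really is tangent to $\M$ (equivalently, that $P_{G_0}$ surjects onto $T_{G_0}\M$ rather than landing in a proper subspace). The conjugation-curve construction handles this cleanly, and as a cross-check one can match $\dim S$ — namely $2K(N-K)$ over $\mathbb C$ and $K(N-K)$ over $\mathbb R$ — against the dimension of $\M$ established in the appendix. Everything else follows mechanically from $G_0^2 = G_0$ and $(I-G_0)G_0 = 0$.
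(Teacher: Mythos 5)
Your proof is correct, and at the one nontrivial step it takes a genuinely different route from the paper's. Both arguments coincide on the routine parts: differentiating the defining relations of $\M$ along curves shows each tangent vector $X$ is Hermitian with $G_0XG_0=(I-G_0)X(I-G_0)=0$, and idempotence, the range identification, and self-adjointness of $P_{G_0}$ on the Hermitian matrices follow from $G_0(I-G_0)=0$ and cyclicity of the trace. Where you differ is in proving the reverse inclusion, i.e.\ that every Hermitian $X$ with vanishing diagonal blocks is a tangent vector. The paper does this by dimension counting: conjugating by a unitary that diagonalizes $G_0$, it computes the real dimension of the range of $P_{G_0}$ to be $2K(N-K)$ over $\mathbb C$ (resp.\ $K(N-K)$ over $\mathbb R$) and matches it against $\dim \M$ from Appendix~\ref{app:Mrealanalytic}. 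You instead realize each such $X$ as an actual curve velocity via $\gamma(t)=e^{tA}G_0e^{-tA}$ with $A=(I-G_0)XG_0-G_0X(I-G_0)$, which is anti-Hermitian (anti-symmetric in the real case), so that $\gamma$ stays in $\M$ and $\dot\gamma(0)=AG_0-G_0A=X$; this computation checks out. Your route buys self-containedness --- no appeal to the appendix's chart construction or to the value of $\dim\M$ --- and it yields the slightly stronger conclusion that the block-off-diagonal Hermitian matrices are all honest curve velocities, not merely a space whose dimension matches that of the span of such velocities; the paper's route buys brevity, dispensing with any curve construction at the cost of leaning on the appendix. One further point in your favor: you check self-adjointness only for Hermitian $X,Y$, which is exactly right, since the formula for $P_{G_0}$ is \emph{not} self-adjoint on all of ${\mathbb F}^{N\times N}$ with the real Hilbert--Schmidt inner product (already for $N=2$, $K=1$, with $G_0$ the projection onto the first coordinate and $X$, $Y$ the two off-diagonal matrix units, one finds $P_{G_0}(X)=0$ while $X$ is not orthogonal to $P_{G_0}(Y)\neq 0$); the lemma is thus properly read inside the real vector space of Hermitian (symmetric) matrices, as in the embedding described before its statement, and your write-up makes that restriction explicit.
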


\begin{proof}
As a first step, we observe that because $P_{G_0}$ is idempotent, 
its  range 
is  the real vector space
$$
   \mathcal V_{G_0} = \{ X \in {\mathbb F}^{N \times N}: X=(I-G_0)X G_0 +G_0 X^*(I-G_0)\} \, .
$$   
We show that this vector space contains each tangent vector at $G_0$.
Let $\gamma: (a,b) \rightarrow \M$ be a smooth curve such that $0 \in (a,b)$ and $\gamma(0) = G_0$.  Since $\gamma(t)$ is an orthogonal projection for all $t \in (a,b)$, one has that $\gamma(t)^*=\gamma(t)$ and $\gamma(t) = \gamma(t )^2=\gamma(t)^3$ 
for all $t \in (a,b)$.  Therefore, differentiating 
$\gamma(t)^2 - \gamma(t)^3=0$ yields
$
   \gamma(t) \dot\gamma(t) \gamma(t) = 0 \, .
$
If $X = \dot\gamma(0)$, then at $t=0$ this gives
$$
  G_0 X G_0 = 0 \, .
$$   
Similarly, if $\iota(t)=I$, then the equations for the complementary projection, $\iota(t)-\gamma(t)=(\iota(t)-\gamma(t))^2=(\iota(t)-\gamma(t))^3$ result in the
identity
$$
  (I-G_0) X  (I-G_0) =0 \,
$$
for $X=\dot\gamma(0)$.
This, together with $\dot\gamma(0)^* = \dot\gamma(0)$ shows that each tangent vector is in ${\mathcal V}_{G_0}$.

Moreover, from Appendix~\ref{thmMisanalytic}, we know the dimension of $\M$ is $2K(N-K)$ when $\mathbb F= \C$ and $K(N-K)$ when $\mathbb F= \R$.  If $U$ is a unitary (respectively orthogonal) matrix whose columns are eigenvectors of $G_0$,
the first $K$ columns corresponding to eigenvalue one, then if $X=X^*$ and $X=(I-G_0)XG_0+G_0X(I-G_0)$, we know
$$
   X = U \left(\begin{array}{cc} 0 & Y\\ Y^* & 0 \end{array}\right) U^*
$$ 
with some $Y \in \mathbb F^{K \times N-K}$, so the real dimension of the space 
${\mathcal V}_{G_0}$ is $2K(N-K)$ when $\mathbb F= \C$ and $K(N-K)$ when $\mathbb F= \R$.
This is precisely the dimension of the real manifold $\M$, thus the vector space is the span of all the tangent vectors.

Finally, we note that the map $P_{G_0}$
is idempotent and self-adjoint with
respect to the (real) Hilbert-Schmidt inner product. Thus, it is an orthogonal projection onto its range, the tangent space of $\M$ at $G_0$.
\end{proof}

Since $P_{G_0}$ is the orthogonal projection onto $T_{G_0}\M$, it can be used to
construct Parseval frames for $T_{G_0} \M$ from suitable orthonormal sequences.
We first discuss the complex case and then the real case.
In the following, $\Delta_{a,b}$ with $a,b \in {\mathbb Z}_N$
denotes the matrix unit
whose only non-vanishing entry is a $1$ in the $a$th row and the $b$th column.

\begin{thm} \label{thm:ProjSandT}
Suppose $\mathbb F = \mathbb C$ and let $\{S_{a,a}: a \in {\mathbb Z}_N\} \cup \{S_{a,b}, T_{a,b}: a,b \in {\mathbb Z}_N, a > b\}$ be the orthonormal basis for the real vector space 
of the anti-Hermitian $N \times N$ matrices given by $S_{a,a}=i\Delta_{a,a}$, $S_{a,b}=i(\Delta_{a,b}+\Delta_{b,a})/\sqrt 2$  
and $T_{a,b} = (\Delta_{a,b}-\Delta_{b,a})/\sqrt 2$ for $a > b$,
then   $P_{G_0}(S_{a,b})=S_{a,b} G_0 - G_0 S_{a,b}$
and $P_{G_0}(T_{a,b})=T_{a,b} G_0 - G_0 T_{a,b}$ provides
a Parseval frame $\{ P_{G_0}(S_{a,b}), P_{G_0}(T_{a,b})\}_{a,b=1}^N$
for the tangent space $T_{G_0}\M$.
\end{thm}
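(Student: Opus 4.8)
The plan is to recognize the asserted family as the image of an orthonormal basis under a single linear map, and then invoke the standard co-isometry criterion for Parseval frames: if $\{e_k\}$ is an orthonormal basis of a real inner product space and $R$ is linear, then the synthesis operator of $\{R(e_k)\}$ factors as $R\circ\Lambda$ through the unitary coordinate isomorphism $\Lambda$ attached to $\{e_k\}$, so the frame operator equals $RR^*$, and $\{R(e_k)\}$ is a Parseval frame for its span precisely when $RR^*$ acts as the identity there. I will take $R$ to be the restriction of $P_{G_0}$ to the real space $\mathcal A$ of anti-Hermitian matrices, endowed with the real Hilbert--Schmidt inner product $\langle X,Y\rangle=\operatorname{Re}\operatorname{tr}(XY^*)$ (which restricts to the Riemannian metric $\operatorname{tr}(XY)$ on Hermitian matrices), under which $\{S_{a,a}\}\cup\{S_{a,b},T_{a,b}:a>b\}$ is orthonormal. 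The entire argument then reduces to (i) identifying $R$ explicitly, (ii) checking its image lies in $T_{G_0}\M$, and (iii) verifying $RR^*=\mathrm{id}$ on $T_{G_0}\M$.

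First I would simplify $R$. For anti-Hermitian $X$ one has $X^*=-X$, so the defining formula of $P_{G_0}$ collapses, using $G_0^2=G_0$, to
\[
  P_{G_0}(X)=(I-G_0)XG_0-G_0X(I-G_0)=XG_0-G_0X,
\]
which is exactly the commutator expression stated in the theorem. A direct check confirms $R(X)=XG_0-G_0X\in T_{G_0}\M$: it is Hermitian since $G_0$ is Hermitian and $X$ anti-Hermitian, and $G_0\,R(X)\,G_0=0$ and $(I-G_0)R(X)(I-G_0)=0$ follow from $G_0^2=G_0$; these are precisely the two block conditions that cut out the tangent space in the proof of Lemma~\ref{lemdimTG0}.

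Next I would compute the adjoint $R^*\colon T_{G_0}\M\to\mathcal A$. For Hermitian $Y\in T_{G_0}\M$ and $X\in\mathcal A$, using that $R(X)$ is Hermitian together with the cyclicity of the trace,
\[
  \langle R(X),Y\rangle=\operatorname{Re}\operatorname{tr}\big((XG_0-G_0X)Y\big)=\operatorname{Re}\operatorname{tr}\big(X(G_0Y-YG_0)\big),
\]
which identifies $R^*(Y)=YG_0-G_0Y$ (an anti-Hermitian matrix, as required). Hence
\[
  RR^*(Y)=(YG_0-G_0Y)G_0-G_0(YG_0-G_0Y)=YG_0+G_0Y-2G_0YG_0.
\]
For $Y\in T_{G_0}\M$ the block relations give $G_0YG_0=0$, while expanding $(I-G_0)Y(I-G_0)=0$ yields $Y=G_0Y+YG_0$; therefore $RR^*(Y)=Y$. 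Since every $R(S_{a,b})$ and $R(T_{a,b})$ lies in $T_{G_0}\M$ and $RR^*$ is the identity on $T_{G_0}\M$, the family is a Parseval frame; moreover $RR^*|_{T_{G_0}\M}=\mathrm{id}$ forces $T_{G_0}\M\subseteq\operatorname{range}(R)$, so the frame spans the whole tangent space rather than a proper subspace.

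The main obstacle is conceptual bookkeeping rather than computational depth: one must resist applying the clean ``orthogonal projection of an orthonormal basis is a Parseval frame'' statement verbatim, because $P_{G_0}$ is the \emph{orthogonal} projection onto $T_{G_0}\M$ only on the Hermitian matrices, whereas here it is fed anti-Hermitian inputs and is not self-adjoint on the full matrix space. The honest content is the co-isometry identity $RR^*=I_{T_{G_0}\M}$, whose verification rests entirely on the two tangent-space block relations together with $G_0^2=G_0$. A minor point to address is the indexing: since $S_{a,b}=S_{b,a}$, $T_{a,b}=-T_{b,a}$ and $T_{a,a}=0$, the family must be read as running over the orthonormal basis (equivalently over $a\ge b$), so that no frame vector is counted twice.
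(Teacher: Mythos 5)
Your proof is correct, but it takes a genuinely different route from the paper's. After the same initial simplification $P_{G_0}(X)=XG_0-G_0X$ for anti-Hermitian $X$, the paper argues in two steps: it first asserts that, since $\{S_{a,b},T_{a,b}\}$ is orthonormal, the ``orthogonal projection'' $P_{G_0}$ maps it to a Parseval frame for its span, and then spends the rest of the proof showing that this span is all of $T_{G_0}\M$ --- by conjugating the basis with a unitary $U$ diagonalizing $G_0$, computing that the projected vectors vanish unless $a>K\ge b$, in which case they equal $i\,UT_{a,b}U^*$ or $-i\,US_{a,b}U^*$, and invoking a dimension count to see that these $2K(N-K)$ orthonormal Hermitian matrices span the tangent space. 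You replace both steps by the single co-isometry identity $RR^*=\mathrm{id}$ on $T_{G_0}\M$ for $R=P_{G_0}$ restricted to the anti-Hermitian matrices, verified directly from the block relations; this yields membership in the tangent space, Parsevality, and spanning all at once, with no diagonalization and no dimension count. What your route buys is rigor exactly where the paper is thinnest: the projection principle the paper quotes does not apply verbatim, because $\{S_{a,b},T_{a,b}\}$ is an orthonormal basis only of the anti-Hermitian matrices, and $P_{G_0}$ is an orthogonal projection only in restriction to the Hermitian matrices --- on all of ${\mathbb F}^{N\times N}$ it is idempotent but not self-adjoint (for $N=2$, $G_0=\Delta_{1,1}$ one has $P_{G_0}(\Delta_{1,2})=0$ while $P_{G_0}(\Delta_{2,1})=\Delta_{1,2}+\Delta_{2,1}$, so kernel and range are not orthogonal). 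Your explicit computation of $R^*$ and of $RR^*$ supplies precisely the content that principle was meant to deliver, so your argument is, if anything, more self-contained than the paper's. What the paper's route buys is concreteness: it exhibits the projected family, after conjugation, as an orthonormal basis of $T_{G_0}\M$ together with zero vectors --- a sharper structural statement than Parsevality --- and that computation is reused verbatim for the real case in Theorem~\ref{thm:ProjT}. Your closing remark on indexing (reading the family over the orthonormal basis, i.e.\ over $a\ge b$, so no vector is counted twice) correctly resolves the paper's slightly loose notation $\{\,\cdot\,\}_{a,b=1}^N$.
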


\begin{proof}
We first note that because $S_{a,b}$ and $T_{a,b}$ are anti-Hermitian, $G_0 S_{a,b} G_0 + G_0 S_{a,b}^* G_0 = 0$
and $G_0 T_{a,b} G_0 + G_0 T_{a,b}^* G_0 = 0$, which shows the simplified expressions for the projections
onto the tangent space. Next, we show the Parseval property.
Since $\{S_{a,b}, T_{a,b}\}_{a,b=1}^N$ is an orthonormal basis,
the orthogonal projection $P_{G_0}$ maps it to a Parseval frame
for its span. This means we only need to show that
the span of the projected vectors is the space of all tangent vectors
at $G_0$.

Conjugating the orthonormal basis vectors 
$\{S_{a,a}: a \in {\mathbb Z}_N\} \cup \{S_{a,b}, T_{a,b}: a,b \in {\mathbb Z}_N, a > b\}$
with a unitary $U$ does not change the span. We choose $U$ so that it diagonalizes $G_0$, with the first $K$
columns of $U$ belonging to eigenvectors of $G$ of eigenvalue one. Thus
\begin{align*}(I-G_0) US_{a,b} U^* G_0 + G_0 U^* S_{a,b}^* U (I-G_0) = & U \left(\begin{array}{cc} 0 & 0\\ 0 & I_{N-K} \end{array}\right)
S_{a,b} \left(\begin{array}{cc} I_K & 0\\ 0 & 0 \end{array}\right) U^* \\
& - U \left(\begin{array}{cc} I_K & 0\\ 0 & 0 \end{array}\right)S_{a,b} \left(\begin{array}{cc} 0 & 0\\ 0 & I_{N-K} \end{array}\right) U^*
\end{align*}
where $I_K$ and $I_{N-K}$ are identity matrices
of size $K\times K$ and $(N-K)\times(N-K)$. Inserting the definition of $S_{a,b}$
shows that this is zero unless $a>K$ and $b \le K$. In that case,
$$
   (I-G_0) US_{a,b} U^* G_0 - G_0 U^* S_{a,b} U (I-G_0)
   = U (i \Delta_{a,b} - i \Delta_{b,a}) U^*/\sqrt 2 = i U T_{a,b} U^* \, .
$$
Similarly, if  $a>K$ and $b \le K$, then
$$
   (I-G_0) UT_{a,b} U^* G_0 - G_0 U^* T_{a,b} U (I-G_0)
   = -i U S_{a,b} U^* \, .
$$  
The set $\{i U T_{a,b} U^*, -i U S_{a,b} U^*\}_{a >K, b \le K}$   
is by inspection the orthonormal basis of a $2K(N-K)$-dimensional real
vector space of Hermitian matrices. Since this is 
in the range of $P_{G_0}$, it is a subspace of the tangent space.  Its dimension
then 
shows that the set $\{i U T_{a,b} U^*$, $-i U S_{a,b} U^*\}_{a >K, b \le K}$ 
spans the entire tangent space. Consequently, 
$
  \{P_{G_0}(S_{a,a}): a \in {\mathbb Z}_N\} \cup 
  \{P_{G_0}(S_{a,b}), P_{G_0}(T_{a,b}): a,b \in {\mathbb Z}_N, a > b\}
$
is a Parseval frame for the tangent space.
\end{proof}

An analogous theorem  holds for the real case.
\begin{thm} \label{thm:ProjT}
Suppose $\mathbb F = \mathbb R$ and let $ \{ T_{a,b}: a,b \in {\mathbb Z}_N, a > b\}$ be the orthonormal basis for the real vector space 
of the anti-symmetric $N \times N$ matrices given by $T_{a,b} = (\Delta_{a,b}-\Delta_{b,a})/\sqrt 2$ for $a > b$,
then $P_{G_0}(T_{a,b})=T_{a,b} G_0 - G_0 T_{a,b}$ provides
a Parseval frame $\{  P_{G_0}(T_{a,b})\}_{a,b=1}^N$
for the tangent space $T_{G_0}\M$.
\end{thm}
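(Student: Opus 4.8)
The plan is to mimic the structure of the proof of Theorem~\ref{thm:ProjSandT}, but with the simplifications afforded by the real setting. The key point is that $\{T_{a,b}: a > b\}$ is an orthonormal basis for the real vector space of anti-symmetric $N \times N$ matrices, and $P_{G_0}$, being an orthogonal projection (by Lemma~\ref{lemdimTG0}), maps any orthonormal set to a Parseval frame for the span of the images. Thus the entire argument reduces to verifying that the collection $\{P_{G_0}(T_{a,b})\}_{a > b}$ spans all of $T_{G_0}\M$; the Parseval property is then automatic.

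First I would note that because each $T_{a,b}$ is anti-symmetric, we have $G_0 T_{a,b} G_0 + G_0 T_{a,b}^* G_0 = G_0 T_{a,b} G_0 - G_0 T_{a,b} G_0 = 0$, which justifies the simplified formula $P_{G_0}(T_{a,b}) = (I-G_0)T_{a,b}G_0 + G_0 T_{a,b}^*(I-G_0) = T_{a,b}G_0 - G_0 T_{a,b}$, exactly as in the complex case. Next I would diagonalize $G_0$ by an orthogonal matrix $U$ whose first $K$ columns are eigenvectors with eigenvalue one, and conjugate the basis, which does not change spans. Writing $G_0 = U \left(\begin{smallmatrix} I_K & 0 \\ 0 & 0 \end{smallmatrix}\right) U^*$, the computation shows that $(I-G_0)U T_{a,b} U^* G_0 - G_0 U^* T_{a,b} U (I-G_0)$ vanishes unless $a > K$ and $b \le K$ (i.e., unless $T_{a,b}$ has a nonzero block in the off-diagonal $(N-K)\times K$ corner), and in the surviving case it reproduces $U T_{a,b} U^*$ up to a constant.

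From this I would conclude that the images $\{U T_{a,b} U^*\}_{a > K, \, b \le K}$ form an orthonormal set spanning a $K(N-K)$-dimensional real space of symmetric matrices lying inside the tangent space. Since Lemma~\ref{lemdimTG0} (together with Appendix~\ref{thmMisanalytic}) gives $\dim T_{G_0}\M = K(N-K)$ in the real case, a dimension count forces this spanning set to be all of $T_{G_0}\M$. Hence $\{P_{G_0}(T_{a,b})\}_{a > b}$ spans the tangent space, and being the image of an orthonormal basis under an orthogonal projection, it is a Parseval frame for $T_{G_0}\M$.

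The main obstacle, which is really only a bookkeeping matter rather than a conceptual one, is the explicit block computation in the conjugated coordinates: one must carefully track which matrix units $\Delta_{a,b}$ survive the two-sided multiplication by the complementary projections and confirm that the surviving images are genuinely orthonormal (so the factor of $\sqrt{2}$ in the definition of $T_{a,b}$ is consistent). The absence here of the $S_{a,b}$ and $S_{a,a}$ families—present only in the complex case because symmetric matrices need no imaginary generators—makes the real argument strictly shorter, so I expect no genuine difficulty beyond transcribing the relevant portion of the previous proof.
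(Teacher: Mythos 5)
Your proposal follows exactly the paper's route: the paper's own proof simply says the argument is verbatim that of Theorem~\ref{thm:ProjSandT} with the $S_{a,b}$ omitted, i.e.\ use that the orthogonal projection $P_{G_0}$ of Lemma~\ref{lemdimTG0} sends an orthonormal set to a Parseval frame for the span of its image, conjugate by an orthogonal $U$ diagonalizing $G_0$ to see that only the $T_{a,b}$ with $a>K$, $b\le K$ survive, and close with a dimension count against $\dim T_{G_0}\M = K(N-K)$.

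One point in your block computation needs correcting, however. The surviving image is \emph{not} ``$U T_{a,b} U^*$ up to a constant.'' Since $T_{a,b}$ is anti-symmetric and $U$ is orthogonal, $U T_{a,b} U^*$ is again anti-symmetric, whereas every tangent vector at $G_0$ in the real case is a symmetric matrix; over $\mathbb R$ no scalar multiple of a nonzero anti-symmetric matrix is symmetric. What the commutator with $G_0$ actually does is swap the anti-symmetric combination of matrix units for the symmetric one: for $a>K$, $b\le K$ the image is
$$
  (I-G_0)\, U T_{a,b} U^*\, G_0 - G_0\, U T_{a,b} U^*\, (I-G_0)
  = U\,\frac{\Delta_{a,b}+\Delta_{b,a}}{\sqrt 2}\,U^* ,
$$
which is what the paper records as $-iUS_{a,b}U^*$. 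Your next sentence already (and inconsistently) describes the images as spanning a space of \emph{symmetric} matrices, so your orthonormality claim and the dimension count are sound once this identification is fixed; with that repair your argument coincides with the paper's.
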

\begin{proof} The proof follows verbatim the proof of the complex case,
with $\{S_{a,b}\}_{a\ge b}$ omitted from the basis of the anti-Hermitian matrices.
We note that after conjugating with a suitable orthogonal matrix $U$,
the resulting projection of $T_{a,b}$, with $a>K$ and $b \le K$,
onto the tangent space is 
$$
  (I-G_0) UT_{a,b} U^* G_0 - G_0 U^* T_{a,b} U (I-G_0)
   = -i U S_{a,b} U^* \, 
$$
which is indeed a real symmetric matrix. Dimension counting then gives
that the image of $\{T_{a,b}\}_{a>b}$ is a basis for the $K(N-K)$-dimensional
space of tangent vectors at $G_0$.
\end{proof}

The
appearance of anti-Hermitian (respectively anti-symmetric) matrices is natural if one considers 
that selecting $G_0 \in \M$ and a differentiable function $u\in C^1({\mathbb R},U(N))$
with values in $U(N)$ (respectively $O(N)$), the manifold of $N\times N$ unitary (respectively orthogonal) matrices, induces
curves
in $\M$ of the form
$$
   \gamma(t) = u(t) G_0 u^*(t) \, .
$$
If $u(0)=I$ then  
from $\frac{d}{dt} u(t)u^*(t) = 0$, we see that
$\dot u(0)+\dot u^*(0)=0$, so $A=\dot u(0)$ is anti-Hermitian (respectively anti-symmetric)  
and   
$$
  \dot\gamma(0)= AG_0 + G_0  A^*=AG_0-G_0 A \, . 
$$
We denote the underlying map by $\Pi_{G_0}: U(N) \to \M$, $\Pi_{G_0}(U)=UG_0U^*$ (respectively  $\Pi_{G_0}: O(N) \to \M$, $\Pi_{G_0}(U)=UG_0U^*$)
and consider the associated tangent map $D\Pi_{G_0}$.

We recall that the embedding of the tangent spaces $T_I U(N)$ (respectively $T_I O(N)$) and of $T_{G_0}\M$ 
in ${\mathbb F}^{N\times N}$ induces via the Hilbert-Schmidt
inner product a Riemannian structure on the tangent spaces.

\begin{cor}
The tangent map $D\Pi_{G_0}$ from $T_IU(N)=\{A \in {\mathbb F}^{N\times N}, A = -A^*\}$ (respectively $T_I O(N)=\{A \in {\mathbb F}^{N\times N}, A = -A^\intercal\}$) 
to $T_{G_0}\M$ given by
$$ 
   D\Pi_{G_0}(A) = A G_0 -G_0 A
$$
is a surjective partial isometry.   
\end{cor}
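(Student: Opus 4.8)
The plan is to deduce the statement directly from the Parseval frame property already established in Theorems~\ref{thm:ProjSandT} and~\ref{thm:ProjT}. First I would observe that on the anti-Hermitian (respectively anti-symmetric) matrices the map $D\Pi_{G_0}$ agrees with the orthogonal projection $P_{G_0}$ of Lemma~\ref{lemdimTG0}: when $A^*=-A$ one has $G_0(A+A^*)G_0=0$, so
$$P_{G_0}(A)=(I-G_0)AG_0+G_0A^*(I-G_0)=AG_0-G_0A=D\Pi_{G_0}(A),$$
which is exactly the simplification used in the proof of Theorem~\ref{thm:ProjSandT}. Consequently $D\Pi_{G_0}$ carries the orthonormal basis $\{S_{a,a}\}\cup\{S_{a,b},T_{a,b}\}_{a>b}$ of $T_IU(N)$ (respectively the basis $\{T_{a,b}\}_{a>b}$ of $T_IO(N)$) onto the Parseval frame for $T_{G_0}\M$ exhibited in those theorems.

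The key step is the general fact that a linear map $L$ sending an orthonormal basis $\{e_i\}$ of its domain to a Parseval frame $\{Le_i\}$ for its (finite-dimensional) target is a coisometry. Indeed, for every $y\in T_{G_0}\M$,
$$\|L^*y\|^2=\sum_i|\langle L^*y,e_i\rangle|^2=\sum_i|\langle y,Le_i\rangle|^2=\|y\|^2,$$
where the first equality is Parseval's identity for the orthonormal basis and the last is the Parseval frame property; this holds verbatim over $\mathbb R$ or $\mathbb C$. Hence $L^*=(D\Pi_{G_0})^*$ is an isometry on $T_{G_0}\M$, equivalently $LL^*=I$ on $T_{G_0}\M$.

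Both conclusions then follow from $LL^*=I$. Surjectivity is immediate, since each $y\in T_{G_0}\M$ equals $L(L^*y)$. For the partial isometry property I would verify that $L^*L$ is an orthogonal projection: it is self-adjoint and $(L^*L)^2=L^*(LL^*)L=L^*L$ is idempotent, so $L$ restricts to an isometry on $(\ker L)^\perp$. I expect no real obstacle, since all the substance has been front-loaded into the identification of $T_{G_0}\M$ and the Parseval frame property. Should one want a self-contained argument, the only computation is to conjugate by a unitary (respectively orthogonal) $U$ diagonalizing $G_0$, which turns $A$ into the block form $\left(\begin{smallmatrix}A_{11}&A_{12}\\-A_{12}^*&A_{22}\end{smallmatrix}\right)$ and yields $D\Pi_{G_0}(A)=U\left(\begin{smallmatrix}0&-A_{12}\\-A_{12}^*&0\end{smallmatrix}\right)U^*$; this exhibits the kernel as the $G_0$-commuting (block-diagonal) anti-Hermitian matrices and gives $\|D\Pi_{G_0}(A)\|^2=2\|A_{12}\|^2=\|A\|^2$ for $A$ orthogonal to the kernel, with surjectivity following from the description of $T_{G_0}\M$ in Lemma~\ref{lemdimTG0}.
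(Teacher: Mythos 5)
Your proposal is correct and follows essentially the same route as the paper: the paper's one-line proof likewise identifies $D\Pi_{G_0}$ with the synthesis operator of the Parseval frames constructed in Theorems~\ref{thm:ProjSandT} and~\ref{thm:ProjT}, and invokes the standard fact that the synthesis operator of a Parseval frame is a surjective partial isometry. Your only addition is to spell out that standard fact (the coisometry identity $LL^*=I$ and the idempotence of $L^*L$), together with an optional block-diagonalization computation, both of which the paper leaves implicit.
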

\begin{proof}
The preceding theorems show
that the map $D\Pi_{G_0}$ is the synthesis  operator of a Parseval frame,
so it is
a surjective partial isometry.
\end{proof}

In order to characterize fixed points of the gradient flows associated
with each potential, we lift frame potentials and gradients to
the manifold of unitary (respectively orthogonal) matrices.

Given a function $\Phi: \M \to \mathbb R $ and $G_0\in \M$, we consider
the lifted function
$$
   \widehat \Phi_{G_0}: U(N) \to \mathbb R, \quad \widehat \Phi(U) = \Phi \circ\Pi_{G_0}(U)
   =\Phi(UG_0U^*) \, 
$$
when $\mathbb F = \mathbb C$
or
$$
   \widehat \Phi_{G_0}: O(N) \to \mathbb R, \quad \widehat \Phi(U) = \Phi \circ\Pi_{G_0}(U)
   =\Phi(UG_0U^\intercal) \, 
$$
when $\mathbb F = \mathbb R$.

\begin{cor}\label{cor0gradchar}
Let $\Phi: \M \to \mathbb R$ be differentiable, then
the gradient $\nabla \Phi(G_0)=0$ if and only if
$\nabla \widehat \Phi_{G_0}(I)=0$.
\end{cor}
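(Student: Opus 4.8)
The plan is to combine the chain rule with the preceding corollary, which asserts that the tangent map $D\Pi_{G_0}$ is a surjective partial isometry. First I would invoke the chain rule: since $\widehat\Phi_{G_0} = \Phi \circ \Pi_{G_0}$ and $\Pi_{G_0}(I) = G_0$, the differential at the identity satisfies $D\widehat\Phi_{G_0}(I)(A) = D\Phi(G_0)(D\Pi_{G_0}(A))$ for every $A \in T_I U(N)$ (and $A \in T_I O(N)$ in the real case). Rewriting both differentials in terms of gradients with respect to the real Hilbert-Schmidt inner product, using the defining identity $\nabla F(p)\cdot X = DF(p)(X)$, this becomes
$$\nabla\widehat\Phi_{G_0}(I)\cdot A = \nabla\Phi(G_0)\cdot D\Pi_{G_0}(A)$$
valid for all such $A$.

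Next I would transfer $D\Pi_{G_0}$ across the inner product via its adjoint $(D\Pi_{G_0})^*$, so that the right-hand side becomes $(D\Pi_{G_0})^*(\nabla\Phi(G_0))\cdot A$. Since this identity holds for every $A$ in the tangent space of the unitary (respectively orthogonal) group at $I$, and since both $\nabla\widehat\Phi_{G_0}(I)$ and $(D\Pi_{G_0})^*(\nabla\Phi(G_0))$ already lie in that tangent space, I can equate the two vectors to obtain
$$\nabla\widehat\Phi_{G_0}(I) = (D\Pi_{G_0})^*(\nabla\Phi(G_0)).$$

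The key step is then to exploit that $D\Pi_{G_0}$ is a \emph{surjective} partial isometry, as established in the preceding corollary. Surjectivity forces the adjoint $(D\Pi_{G_0})^*$ to be injective: restricted to the orthogonal complement of its kernel, $D\Pi_{G_0}$ is an isometric isomorphism onto $T_{G_0}\M$, so $(D\Pi_{G_0})^*$ is an isometry of $T_{G_0}\M$ into $T_I U(N)$ and in particular has trivial kernel. Consequently $(D\Pi_{G_0})^*(\nabla\Phi(G_0)) = 0$ if and only if $\nabla\Phi(G_0) = 0$, which together with the displayed identity yields $\nabla\widehat\Phi_{G_0}(I) = 0 \iff \nabla\Phi(G_0) = 0$. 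The forward implication (from $\nabla\Phi(G_0)=0$ to $\nabla\widehat\Phi_{G_0}(I)=0$) is immediate and uses no injectivity; the only substantive point, and the one I expect to be the crux, is the injectivity of the adjoint, which rests entirely on the surjectivity of $D\Pi_{G_0}$ furnished by the earlier Parseval-frame computation.
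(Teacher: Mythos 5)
Your proof is correct and takes essentially the same route as the paper: the same chain-rule identity $\nabla \widehat\Phi_{G_0}(I) \cdot A = \nabla \Phi(G_0) \cdot D\Pi_{G_0}(A)$ for anti-Hermitian (respectively anti-symmetric) $A$, with the converse direction resting on the surjectivity of $D\Pi_{G_0}$ furnished by the Parseval-frame computation. Your final step via injectivity of the adjoint is just the dual formulation of the paper's direct argument, since $\ker\bigl((D\Pi_{G_0})^*\bigr) = \bigl(\operatorname{ran} D\Pi_{G_0}\bigr)^\perp$.
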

\begin{proof}
We first cover the complex case. 
Letting $\gamma(t) = u(t)G_0 u^*(t)$,
$u(0)=I$ and $\dot u(0)=A=-A^*$, then the chain rule gives
$\frac{d}{dt}\vert_{t=0} \Phi(\gamma(t)) = \nabla \Phi(G_0)\cdot (AG_0-G_0A)$.
On the other hand,
$
\frac{d}{dt}\vert_{t=0} \Phi(\gamma(t)) 
= 
\frac{d}{dt}\vert_{t=0} \widehat \Phi_{G_0} (u(t))=
\nabla \widehat\Phi(I) \cdot A.
$
We conclude
$$
   \nabla \widehat \Phi_{G_0}(I) \cdot A = \nabla \Phi(G_0) \cdot D\Pi_{G_0}(A) \, 
$$
for any anti-Hermitian $A$. 
Thus, if $\nabla \Phi(G_0) =0$ then  $\nabla \widehat\Phi_{G_0}(I) =0$. Conversely,
since $D\Pi_{G_0}$ is surjective, $\nabla \widehat \Phi_{G_0}(I) = 0$
implies that $\nabla \Phi(G_0) = 0$.  

In the real case, $A^*$ is simply the transpose of $A$, so $A=-A^*$ means
that $A$ is skew-symmetric rather than anti-Hermitian.  
The same argument as in the complex case applies.
\end{proof}

\subsection{Frame potentials and properties of their critical points}

From the last part of this  section we have learnt that the gradient descent for any real-analytic frame potential 
always approaches a critical point of the frame potential. Next, we
direct our attention to the geometric character of the critical points corresponding to several choices
of frame potentials. An essential tool for the characterization of critical
points is that by the last corollary,  $\nabla \Phi$ vanishes at $G_0$ if
and only if $\nabla \hat \Phi_{G_0}$ vanishes at $I$.

We start with 
$\nabla (\widehat E^\alpha_{x,y})_G(I)$. From here on, when computing the gradient $\nabla \widehat \Phi_G(I)$
corresponding to any frame potential $\Phi$, we suppress the subscript $G$ and the argument $I$ and simply write $\nabla \widehat \Phi$.

\begin{lemma}\label{lemgradExy}
Let $\mathbb F= \mathbb C$ or $\mathbb F= \mathbb R$.  Let $G \in \M$, $\a \in (0, \infty)$ and  $x,y \in \{1,2,...,N \}$, 
and let
$ 
  (\widehat E^\alpha_{x,y})(U) = E^\alpha_{x,y}(UGU^*) \, ,
$
then the $(a,b)$ entry of the gradient of $\widehat{E}^\alpha_{x,y}$ at $I$ 
is given as follows:
\begin{align*}
[\nabla \widehat E^\alpha_{x,y} ]_{a,b} = 
\alpha   e^{\alpha |G_{x,y}|^2} (G_{y,x}G_{x,b}\delta_{y,a}  - G_{a,y}G_{y,x}\delta_{b,x} +G_{x,y}G_{y,b}\delta_{a,x}-G_{a,x} G_{x,y} \delta_{b,y}) \, .
\end{align*}
In particular, if $x=y$, then
\begin{align*}
[\nabla \widehat E^\alpha_{x,x} ]_{a,b} 
&=  2 \alpha  G_{x,x} e^{\alpha |G_{x,x}|^2} (\delta_{a,x} G_{x,b}-\delta_{b,x} G_{a,x}) \, .
\end{align*}
\end{lemma}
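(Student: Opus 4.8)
The plan is to compute the directional derivative of $\widehat E^\alpha_{x,y}$ at the identity along an arbitrary tangent direction and then read off the gradient by writing that derivative as a real Hilbert--Schmidt inner product. Concretely, I would fix an anti-Hermitian matrix $A$ (anti-symmetric when $\mathbb F = \mathbb R$), choose a smooth curve $u(t)$ in $U(N)$ (respectively $O(N)$) with $u(0)=I$ and $\dot u(0)=A$, and set $\gamma(t)=u(t)Gu(t)^*$. Differentiating $u(t)u(t)^*=I$ gives $\dot\gamma(0)=AG-GA$, which is Hermitian. Applying the chain rule to $\widehat E^\alpha_{x,y}(u(t))=e^{\alpha|\gamma(t)_{x,y}|^2}$, together with $\frac{d}{dt}\big|_0|\gamma_{x,y}|^2 = 2\,\mathrm{Re}\bigl(\overline{G_{x,y}}\,\dot\gamma_{x,y}(0)\bigr)$, yields
\[
\ddtzero \widehat E^\alpha_{x,y}(u(t)) = 2\alpha e^{\alpha|G_{x,y}|^2}\,\mathrm{Re}\bigl(\overline{G_{x,y}}\,[AG-GA]_{x,y}\bigr).
\]

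Next I would expand $[AG-GA]_{x,y}=\sum_k\bigl(A_{x,k}G_{k,y}-G_{x,k}A_{k,y}\bigr)$ and collect the entries of $A$. Using $\overline{G_{x,y}}=G_{y,x}$, this puts the derivative in the form $2\alpha e^{\alpha|G_{x,y}|^2}\,\mathrm{Re}\sum_{p,q}B_{p,q}A_{p,q}$ with
\[
B_{p,q}=\delta_{p,x}G_{y,x}G_{q,y}-\delta_{q,y}G_{y,x}G_{x,p}.
\]
By the defining property $\grad\widehat E^\alpha_{x,y}(I)\cdot A=\ddtzero\widehat E^\alpha_{x,y}(u(t))$, it remains to identify the unique matrix $M$ in the tangent space $T_IU(N)$ (that is, the unique anti-Hermitian $M$) whose real Hilbert--Schmidt pairing $\mathrm{Re}\,\mathrm{tr}(MA^*)$ reproduces this functional for every admissible $A$.

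This identification is the one delicate point, and I expect it to be the main obstacle. The subtlety is that the entries of $A$ are not independent real parameters: anti-Hermiticity forces $\overline{A_{p,q}}=-A_{q,p}$, so the real-linear functional $A\mapsto\mathrm{Re}\sum_{p,q}B_{p,q}A_{p,q}$ only detects the anti-Hermitian part of $B$. Pairing the $(p,q)$ and $(q,p)$ contributions and using $\mathrm{Re}(z)=\mathrm{Re}(\bar z)$ shows the functional depends on $B$ only through $B-B^*$; matching this against $\mathrm{Re}\,\mathrm{tr}(MA^*)$ for anti-Hermitian $M$ (so that $M-M^*=2M$) causes the factor of $2$ to cancel and forces
\[
M_{p,q}=\alpha e^{\alpha|G_{x,y}|^2}\bigl(\overline{B_{p,q}}-B_{q,p}\bigr).
\]

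Substituting the expression for $B$ and simplifying with $\overline{G_{i,j}}=G_{j,i}$ then produces exactly the four-term formula in the statement. A direct verification that $M_{b,a}=-\overline{M_{a,b}}$ confirms $M\in T_IU(N)$, the consistency condition guaranteeing that $M$ is genuinely the gradient. Finally, the special case $x=y$ follows by setting $y=x$ in the general formula and using that $G_{x,x}$ is real: the first and third terms coincide, as do the second and fourth, collapsing the expression to $2\alpha G_{x,x}e^{\alpha|G_{x,x}|^2}(\delta_{a,x}G_{x,b}-\delta_{b,x}G_{a,x})$. For $\mathbb F=\mathbb R$ the identical computation applies verbatim with $A$ anti-symmetric and all conjugations trivial, so the real-symmetric analogue of $M$ is obtained and the stated formula holds with real entries.
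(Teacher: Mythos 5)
Your proof is correct, and its core computation coincides with the paper's: both lift the potential to the unitary (orthogonal) group via $U \mapsto E^{\alpha}_{x,y}(UGU^*)$, use $\dot\gamma(0)=AG-GA$ for an anti-Hermitian (anti-symmetric) direction $A$, and apply the chain rule to $e^{\alpha|\gamma_{x,y}|^2}$. Where you genuinely diverge is in how the gradient is extracted from the directional derivatives. The paper expands the gradient in the explicit orthonormal basis $\{S_{a,b},T_{a,b}\}$ of anti-Hermitian matrices, $S_{a,b}=i(\Delta_{a,b}+\Delta_{b,a})/\sqrt{2}$ and $T_{a,b}=(\Delta_{a,b}-\Delta_{b,a})/\sqrt{2}$, computing the two coefficients $\nabla\widehat{E}^{\alpha}_{x,y}\cdot S_{a,b}$ and $\nabla\widehat{E}^{\alpha}_{x,y}\cdot T_{a,b}$ separately and then summing the basis expansion to read off the $(a,b)$ entry; orthonormality silently handles the fact that the entries of a tangent vector are not independent real parameters. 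You instead differentiate along an arbitrary $A$ and identify the gradient as the Riesz representer of the resulting real-linear functional, which forces you to confront that dependence explicitly --- and you resolve it correctly: the functional $A\mapsto \mathrm{Re}\sum_{p,q}B_{p,q}A_{p,q}$ sees only $B-B^*$, your candidate $M_{p,q}=\alpha e^{\alpha|G_{x,y}|^2}\bigl(\overline{B_{p,q}}-B_{q,p}\bigr)$ is anti-Hermitian, and matching against $\mathrm{Re}\,\mathrm{tr}(MA^*)$ (with the factor of $2$ cancelling against the $1/2$ from symmetrizing) reproduces the four-term formula; substituting $B_{p,q}=\delta_{p,x}G_{y,x}G_{q,y}-\delta_{q,y}G_{y,x}G_{x,p}$ agrees term by term with the lemma, and the $x=y$ collapse is the same in both arguments. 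Your route buys a single uniform computation --- no case split between $S$- and $T$-directions, and the real case follows by erasing conjugations --- at the price of carrying out the projection/duality step by hand; the paper's basis-by-basis computation is more pedestrian but makes well-definedness of the gradient immediate from orthonormality.
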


\begin{proof} 
Let $S_{a,b}=i(\Delta_{a,b}+\Delta_{b,a})/\sqrt 2$ for $a \leq b$ and let $T_{a,b}=(\Delta_{a,b}-\Delta_{b,a})/\sqrt 2$ for $a<b$ as before.

We first compute the entries of the matrices $S_{a,b} G - G S_{a,b}$ and $T_{a,b} G - G T_{a,b}$,
\begin{align*}
  [S_{a,b} G - G S_{a,b}]_{x,y} & = \frac{i}{\sqrt 2} [ \Delta_{a,b} G + \Delta_{b,a} G  - G \Delta_{a,b} - G \Delta_{b,a} ]_{x,y}\\
  & = \frac{i}{\sqrt{2}} [ \delta_{a,x} G_{b,y} + \delta_{b,x} G_{a,y} - G_{x,a} \delta_{b,y} - G_{x,b} \delta_{y,a}] \, .
\end{align*}
and
\begin{align*}
  [T_{a,b} G - G T_{a,b}]_{x,y} & = \frac{1}{\sqrt 2} [ \Delta_{a,b} G - \Delta_{b,a} G  - G \Delta_{a,b} + G \Delta_{b,a} ]_{x,y}\\
  & = \frac{1}{\sqrt{2}} [ \delta_{a,x} G_{b,y} - \delta_{b,x} G_{a,y} - G_{x,a} \delta_{b,y} + G_{x,b} \delta_{a,y}] \, .
\end{align*}

  Let $x, y \in {\mathbb Z}_N$, 
then
\begin{align*}
   \nabla \widehat E^\alpha_{x,y} \cdot S_{a,b} 
   &= \frac{d}{dt} \vert_{t=0} E^\alpha_{x,y} (G + t(S_{a,b}G-G S_{a,b}))\\
   &=\frac{i}{ \sqrt{2} } \alpha  e^{a|G_{x,y}|^2} (G_{y,x} (\delta_{a,x} G_{b,y} + \delta_{b,x} G_{a,y}
                              - G_{x,a} \delta_{b,y} - G_{x,b} \delta_{y,a})\\
                              & \phantom{\mbox{space}} -G_{x,y}(\delta_{a,x} G_{y,b} + \delta_{b,x} G_{y,a} - G_{a,x} \delta_{b,y} - G_{b,x} \delta_{y,a}))
\end{align*}
and 
\begin{align*}
   \nabla \widehat E^\alpha_{x,y} \cdot T_{a,b} 
   &= \frac{d}{dt} \vert_{t=0} E^\alpha_{x,y} (G_0 + t(T_{a,b}G_0-G_0 T_{a,b}))\\
   &= \frac{1}{\sqrt{2}} \alpha  e^{a|G_{x,y}|^2} (G_{y,x} (\delta_{a,x} G_{b,y}-\delta_{b,x} G_{a,y} - G_{x,a} \delta_{b,y} + G_{x,b} \delta_{a,y})\\
                                       & \phantom{\mbox{space}}         +G_{x,y} (\delta_{a,x} G_{y,b} -\delta_{b,x} G_{y,a} - G_{a,x} \delta_{b,y} + G_{b,x} \delta_{a,y})) \, .
\end{align*}
Thus, when $\mathbb F= \mathbb C$, summing the components of the gradient gives
\begin{align*}
[\nabla \widehat E^\alpha_{x,y} ]_{a,b} & =
 [(\nabla \widehat E^\alpha_{x,y} \cdot S_{a,b}) S_{a,b} + (\nabla \widehat E^\alpha_{x,y} \cdot T_{a,b}) T_{a,b} ]_{a,b}\\
&=  \alpha   e^{\alpha |G_{x,y}|^2} (G_{y,x}G_{x,b}\delta_{y,a}  - G_{a,y}G_{y,x}\delta_{b,x} +G_{x,y}G_{y,b}\delta_{a,x}-G_{a,x} G_{x,y} \delta_{b,y}) \, .
\end{align*}
Using the fact that $G_{j,l} = G_{l,j}$ for all $j,l \in \Zn$ when $\mathbb F= \mathbb R$, we obtain again
\begin{align*}
[\nabla \widehat E^\alpha_{x,y} ]_{a,b} & =
 [ (\nabla \widehat E^\alpha_{x,y} \cdot T_{a,b}) T_{a,b} ]_{a,b}\\
&=  \alpha   e^{\alpha |G_{x,y}|^2} (G_{y,x}G_{x,b}\delta_{y,a}  - G_{a,y}G_{y,x}\delta_{b,x} +G_{x,y}G_{y,b}\delta_{a,x}-G_{a,x} G_{x,y} \delta_{b,y}) \, .
\end{align*}

\end{proof}

Because the expression for $\nabla \widehat E^\alpha_{x,y}$ does not depend on whether $\mathbb F= \mathbb C$ or $\mathbb F= \mathbb R$, we do not distinguish between the two cases for the remaining gradient computations.

\subsubsection{The sum potential and the absence of orthogonal frame vectors}\label{sectionusum}  

Next, we investigate the sum potential.

\begin{prop}\label{propgradUsum}
Let $G \in \M$, let $a, b \in \Zn$, and let
$ 
  \wUsum(U) = \Usum(UGU^*) \, ,
$ then the $(a,b)$ entry of the gradient of $\wUsum$  is given as follows:
\begin{align*}
\left[ \nabla \wUsum  \right]_{a,b}   & = 2 \sum_{j \in \Zn,
j \not\in \{a,b\}} ( e^{\eta |G_{a,j}|^2}- e^{\eta |G_{b,j}|^2 } ) G_{a,j} G_{j,b} + 2  e^{\eta|G_{a,b}|^2} (G_{a,b} G_{b,b} -
G_{a,a} G_{a,b})
\\
& \phantom{=}\, \, \, \jh{+2 
e^{\eta(C_{N,K}^2 - \frac{K^2}{N^2})} \left(
e^{\eta G_{a,a}^2} G_{a,a} G_{a,b}    -    e^{\eta G_{b,b}^2}    G_{a,b} G_{b,b} \right)\, .}
\end{align*}
\end{prop}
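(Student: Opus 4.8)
The plan is to use the linearity of the gradient together with the explicit formula supplied by Lemma~\ref{lemgradExy}. By Definition~\ref{defUsum} and Definition~\ref{defExy}, the sum potential is the fixed linear combination
$$
  \Usum = \sum_{x\neq y} E_{x,y}^{\eta} + e^{-\eta(K^2/N^2 - C_{N,K}^2)}\sum_{m=1}^N E_{m,m}^{\eta}
$$
of exponential potentials. Since pre-composition with the map $U \mapsto UGU^*$ respects linear combinations, one has $\wUsum = \sum_{x\neq y}\widehat E^\eta_{x,y} + e^{-\eta(K^2/N^2 - C_{N,K}^2)}\sum_{m=1}^N \widehat E^\eta_{m,m}$, and because the gradient at $I$ is linear, $[\nabla \wUsum]_{a,b}$ is the corresponding sum of the entries $[\nabla \widehat E^\eta_{x,y}]_{a,b}$, each of which is given by Lemma~\ref{lemgradExy} taken with $\alpha = \eta$.

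First I would treat the off-diagonal part: inserting the general gradient formula into $\sum_{x \neq y}$ and evaluating the four Kronecker symbols $\delta_{y,a}, \delta_{b,x}, \delta_{a,x}, \delta_{b,y}$. Each delta collapses a double sum to a single sum, pinning one index to $a$ or $b$. The decisive simplification is that $G$ is Hermitian, so $|G_{x,y}| = |G_{y,x}|$; this makes the first and third delta-terms identical and likewise the second and fourth, which produces the overall factor $2$ and leaves
$$
  2\eta\Bigl(\sum_{j\neq a} e^{\eta|G_{a,j}|^2} G_{a,j}G_{j,b} - \sum_{j\neq b} e^{\eta|G_{b,j}|^2} G_{a,j}G_{j,b}\Bigr).
$$

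To recover the displayed form I would peel the boundary indices off these two sums. The generic ranges $j \notin \{a,b\}$ combine into $\sum_{j\notin\{a,b\}}(e^{\eta|G_{a,j}|^2} - e^{\eta|G_{b,j}|^2})G_{a,j}G_{j,b}$, while the $j = b$ term of the first sum together with the $j = a$ term of the second (again using $|G_{b,a}| = |G_{a,b}|$) assemble into $e^{\eta|G_{a,b}|^2}(G_{a,b}G_{b,b} - G_{a,a}G_{a,b})$. For the diagonal part I would use the $x = y$ special case of Lemma~\ref{lemgradExy}, sum the resulting $2\eta G_{m,m}e^{\eta|G_{m,m}|^2}(\delta_{a,m}G_{m,b} - \delta_{b,m}G_{a,m})$ over $m$ against the prefactor $e^{\eta(C_{N,K}^2 - K^2/N^2)}$, and let $\delta_{a,m}$ and $\delta_{b,m}$ select $m = a$ and $m = b$ respectively (the diagonal entries are real, so $|G_{m,m}|^2 = G_{m,m}^2$). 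Together these three contributions reproduce the three groups of terms in the claim.

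The argument is entirely mechanical once Lemma~\ref{lemgradExy} is available; the only step demanding care is the index bookkeeping in the off-diagonal sum — reading off correctly which index each Kronecker delta fixes, observing via the Hermitian symmetry that the paired terms merge into the factor of $2$, and cleanly separating the boundary contributions $j \in \{a,b\}$ from the generic range $j \notin \{a,b\}$. No genuine analytic difficulty appears.
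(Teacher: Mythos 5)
Your proposal follows exactly the paper's route: the paper's own proof is just ``by linearity, $\nabla \wUsum$ is the sum of the $\nabla \widehat E^\eta_{x,y}$, now apply Lemma~\ref{lemgradExy},'' and you have filled in precisely that computation. Your index bookkeeping is correct: the four Kronecker terms collapse as you say, Hermitian symmetry $|G_{x,y}|=|G_{y,x}|$ merges them pairwise into the factor $2$, and the split of $j\in\{a,b\}$ from $j\notin\{a,b\}$ together with the diagonal block reproduces the three groups of terms.

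There is, however, one loose end that you assert away rather than resolve: every contribution you compute carries the prefactor $2\eta$, not $2$ (you write this yourself in the intermediate display), so what your calculation actually yields is $\eta$ times the formula stated in the Proposition. Your closing claim that the three contributions ``reproduce the three groups of terms in the claim'' is therefore not literally true. The origin of the mismatch is an inconsistency in the paper itself: the paper's proof begins with
$$
\nabla \wUsum = \frac{1}{\eta}\left[ \sum_{l \neq j} \nabla \widehat E^{\eta}_{l,j}
+ e^{\eta\left(C_{N,K}^2 - \frac{K^2}{N^2}\right)} \sum_{s=1}^N \nabla \widehat E^{\eta}_{s,s} \right],
$$
i.e.\ it treats $\Usum$ as if it carried a $1/\eta$ normalization parallel to the $1/\delta$ in $\Udiag$ and the $1/\alpha$ in $R_x^{\alpha,\beta}$, even though Definition~\ref{defUsum} as printed has no such factor. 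Since $\eta>0$ is a fixed constant, the discrepancy is harmless for every later use of the Proposition (all that matters is where the gradient vanishes), but a careful write-up should either carry the overall factor $\eta$ and note that the displayed formula is $\frac{1}{\eta}\nabla\wUsum$ for $\Usum$ as defined, or point out that the definition is missing the $1/\eta$ normalization that the stated gradient formula presupposes.
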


\begin{proof}
By linearity of the gradient operator, we have
$$
\nabla \wUsum  =
\frac{1}{\eta} \left[ \tiny{\sum_{
\begin{array}{cc}
l,j=1\\
l \neq j 
\end{array}
}^N } \nabla \widehat{E}_{l,j}^\eta 
+
e^{\eta \left(  C_{N,K}^2 - \frac{K^2}{N^2} \right)}
\sum_{
s=1
}^N \nabla \widehat{E}_{s,s}^\eta 
\right]  .
$$
By applying 
Lemma~\ref{lemgradExy}, the claim follows.
\end{proof}

In the investigation of gradient descent for equal-norm frames, nontrivially
orthodecomposable frames presented undesirable critical points \cite{Strawn:mfdstruc}. We show that this class
of frames does not pose problems for our optimization strategy
when an initial condition is met. 

\begin{defn}
A frame $\mathcal F$ for a Hilbert space $\mathcal H$ is called \textem{orthodecomposable}
if there are mutually disjoint subsets $J_1$, $J_2$, \dots, $J_m$ 
partitioning ${\mathbb Z}_N$
and subspaces ${\mathcal H}_1$, ${\mathcal H}_2, \dots$, ${\mathcal H}_m$
of $\mathcal H$ such that
$\{f_j\}_{j \in J_k}$ is a frame for ${\mathcal H}_k$
and ${\mathcal H}_k \perp {\mathcal H}_l$ for all $k \ne l$,
so $\mathcal H = \bigoplus_{k=1}^m {\mathcal H}_k$.
\end{defn}

In terms of its Gram matrix $G$, a frame $\mathcal F$ is nontrivially orthodecomposable
if there is some permutation matrix $P$ which makes $G$ block diagonal,
$$
   G'=PGP^* = \left(\begin{array}{cc} G'_{1,1} & 0\\ 0 & G'_{2,2} \end{array}\right)
$$
where $G'_{1,1} \not \equiv 0$ and $G'_{2,2} \not\equiv 0$.

A sufficiently small initial value of the sum potential rules out that the gradient
descent on $\M$ encounters such orthodecomoposable frames.

\begin{prop}\label{propnozeros}
Let $G \in \M$ and $\eta>0$.  Suppose that $$\Usum (G) < 
%
 2 + (N^2-2) e^{\eta( K/(N^2-2) - K^2/N(N^2-2) + K(N-K)/(N(N-1)(N^2-2))} \, ,
$$
then $G$ contains no zero entries.
\end{prop}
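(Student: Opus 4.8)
The plan is to prove the contrapositive: if $G \in \M$ has at least one zero entry, then $\Usum(G)$ is bounded below by the quantity on the right-hand side, so the strict inequality in the hypothesis forces every entry of $G$ to be nonzero. The natural starting point is the single-sum representation already exploited in the earlier lower bound for $\Usum$, namely
$$
   \Usum(G) = \sum_{j,l=1}^N e^{x_{j,l}}, \qquad
   x_{j,l} = \eta |G_{j,l}|^2 - \eta\, \delta_{j,l}\Bigl(\tfrac{K^2}{N^2} - C_{N,K}^2\Bigr),
$$
whose total exponent sum is controlled by the projection identity $\sum_{j,l}|G_{j,l}|^2 = \mathrm{tr}(G^2)=\mathrm{tr}(G) = K$.

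First I would reduce a single vanishing entry to a \emph{pair} of vanishing off-diagonal entries. If $G_{a,b} = 0$ with $a \ne b$, then symmetry/Hermiticity gives $G_{b,a} = \overline{G_{a,b}} = 0$, so two such off-diagonal zeros are present at once. If instead a diagonal entry vanishes, $G_{a,a} = 0$, then positive semidefiniteness of $G$ applied to the $2\times 2$ principal submatrix with rows and columns $a,b$ forces its determinant $-|G_{a,b}|^2$ to be nonnegative, whence $G_{a,b} = 0$ for every $b$; choosing any $b \ne a$ again produces a pair $G_{a,b} = G_{b,a} = 0$. In either case $G$ has two off-diagonal entries equal to zero.

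Next I would isolate these two terms. Being off-diagonal with $|G_{a,b}| = 0$, each contributes $e^{x_{a,b}} = e^{0} = 1$, so together they add exactly $2$. Applying Jensen's inequality (convexity of the exponential) with uniform weights to the remaining $N^2 - 2$ terms yields
$$
   \Usum(G) \ge 2 + (N^2-2)\, \exp\!\Bigl(\tfrac{1}{N^2-2}\!\!\sum_{(j,l)\notin\{(a,b),(b,a)\}}\!\! x_{j,l}\Bigr).
$$
Since the two omitted terms have exponent $x_{a,b} = x_{b,a} = 0$, the remaining exponent sum equals the full sum $\sum_{j,l} x_{j,l} = \eta K - \eta N\bigl(\tfrac{K^2}{N^2} - C_{N,K}^2\bigr) = \eta\bigl(K - \tfrac{K^2}{N} + \tfrac{K(N-K)}{N(N-1)}\bigr)$, after substituting $C_{N,K}^2 = \tfrac{K(N-K)}{N^2(N-1)}$.

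The last step is the algebraic check that this bound is exactly the stated one. Dividing the displayed exponent sum by $N^2-2$ reproduces verbatim the three-fraction exponent $\eta\bigl(\tfrac{K}{N^2-2} - \tfrac{K^2}{N(N^2-2)} + \tfrac{K(N-K)}{N(N-1)(N^2-2)}\bigr)$ in the proposition; combining fractions (using $K - \tfrac{K^2}{N} + \tfrac{K(N-K)}{N(N-1)} = \tfrac{K(N-K)}{N-1}$) confirms the match. The strict hypothesis $\Usum(G) < (\,\cdot\,)$ then contradicts this lower bound, so $G$ can have no zero entry. I expect the main obstacle to be bookkeeping rather than conceptual: the delicate points are the diagonal-zero reduction via positive semidefiniteness and ensuring that precisely two (rather than one) zero terms are separated out, so that the Jensen estimate lands exactly on the prescribed threshold.
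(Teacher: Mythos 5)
Your proof is correct and follows essentially the same route as the paper's: argue the contrapositive, reduce any zero entry to a pair of vanishing off-diagonal entries (a vanishing diagonal entry forcing its whole row to vanish), separate out the two terms contributing $e^0=1$ each, and apply Jensen's inequality to the remaining $N^2-2$ terms, whose exponent sum is still controlled by $\operatorname{tr}(G)=K$ and the value of $C_{N,K}$. The only cosmetic difference is that you justify the diagonal-zero reduction via positive semidefiniteness of a $2\times 2$ principal minor, whereas the paper simply cites that a vanishing diagonal entry annihilates its row.
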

\begin{proof} We prove the contrapositive.
Let $G_{j,l}= 0$.
Without loss of generality, we can assume that $j \ne l$ because if a diagonal entry in $G$ vanishes, then so do all entries
in the corresponding row.
Now we can perform Jensen's inequality for the entries other than $G_{j,l}$ and $G_{l,j}$ and obtain
$$
\Usum(G) \ge  
2+ (N^2-2)  e^{\eta/(N^2-2) \sum_{j,l=1}^N (|G_{j,l}|^2 - \eta \delta _{j,l} (K^2/N^2 - C_{N,K}^2))} \, .
$$
Inserting the value for $C_{N,K}$ and using the Parseval property gives the claimed bound.
\end{proof}

\subsubsection{The diagonal potential and equal-norm Parseval frames}\label{sectionudiag}

\begin{defn}\label{defUdiag} 
Let $G = (G_{a,b})_{a,b=1}^N  \in \M$.  Given $\delta \in (0,\infty)$, we define  the \textem{diagonal potential} $\Udiag: \M \rightarrow \R$ by
\begin{align*}
\Udiag(G) 
&= \frac{1}{\delta} \sum_{j=1}^N E_{j,j}^\delta (G)-  \frac{N}{\delta} e^{\delta \frac{K^2}{N^2}} \\
&= \frac{1}{\delta} \sum_{j=1}^N e^{\delta |G_{j,j}|^2} - \frac{N}{\delta} e^{\delta \frac{K^2}{N^2}} \, .
\end{align*}
\end{defn}

\begin{prop}\label{propgradUdiag}
Let $G \in \M$, let $a, b \in \Zn$, and let $ 
  \wUdiag(U) = \Udiag(UGU^*)
$.  Then the $(a,b)$ entry of the gradient of $\wUdiag$ is given as follows: 
\\
\\
\begin{align*}
\left[ \nabla \wUdiag   \right]_{a,b}   & = 2 G_{a,b} (  G_{a,a} e^{\delta |G_{a,a}|^2 } - G_{b,b} e^{ \delta |G_{b,b}|^2}) \, .
\end{align*}
\end{prop}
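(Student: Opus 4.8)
The plan is to reduce the computation directly to the diagonal ($x=y$) case of Lemma~\ref{lemgradExy}, exploiting the linearity of the gradient. First I would observe that the additive constant $\frac{N}{\delta} e^{\delta K^2/N^2}$ appearing in Definition~\ref{defUdiag} is independent of $U$, so it contributes nothing to $\nabla \wUdiag$. Consequently $\Udiag$ agrees with $\frac{1}{\delta} \sum_{j=1}^N E_{j,j}^\delta$ up to a constant, and therefore
$$
\nabla \wUdiag = \frac{1}{\delta} \sum_{j=1}^N \nabla \widehat{E}_{j,j}^\delta \, .
$$

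Next I would insert the diagonal gradient supplied by Lemma~\ref{lemgradExy}, specialized to $\alpha = \delta$ and $x = j$, namely $[\nabla \widehat{E}_{j,j}^\delta]_{a,b} = 2\delta\, G_{j,j}\, e^{\delta |G_{j,j}|^2}(\delta_{a,j} G_{j,b} - \delta_{b,j} G_{a,j})$. The prefactor $\delta$ cancels against the $\frac{1}{\delta}$ from the definition, leaving
$$
[\nabla \wUdiag]_{a,b} = 2 \sum_{j=1}^N G_{j,j}\, e^{\delta |G_{j,j}|^2}(\delta_{a,j} G_{j,b} - \delta_{b,j} G_{a,j}) \, .
$$
Finally, the two Kronecker symbols collapse the sum: the term carrying $\delta_{a,j}$ survives only at $j = a$ and contributes $G_{a,a}\, e^{\delta |G_{a,a}|^2} G_{a,b}$, while the term carrying $\delta_{b,j}$ survives only at $j = b$ and contributes $-G_{b,b}\, e^{\delta |G_{b,b}|^2} G_{a,b}$. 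Factoring out the common $G_{a,b}$ yields the stated identity $[\nabla \wUdiag]_{a,b} = 2 G_{a,b}(G_{a,a}\, e^{\delta |G_{a,a}|^2} - G_{b,b}\, e^{\delta |G_{b,b}|^2})$.

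Since every step is an immediate substitution, I do not expect any genuine obstacle; the only points that require minor care are the cancellation of the $\delta$ prefactors and the observation that each Kronecker delta selects a single index from the sum over $j$. As noted after Lemma~\ref{lemgradExy}, the expression for $\nabla \widehat{E}_{x,y}^\alpha$ is identical in the real and complex cases, so this computation is valid over both $\mathbb F = \mathbb R$ and $\mathbb F = \mathbb C$ without further comment.
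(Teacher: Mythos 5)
Your proposal is correct and follows essentially the same route as the paper: linearity of the gradient reduces $\nabla \wUdiag$ to $\frac{1}{\delta}\sum_{j} \nabla \widehat{E}_{j,j}^{\delta}$, and the diagonal case of Lemma~\ref{lemgradExy} with the Kronecker deltas collapsing the sum gives the stated formula. Your explicit remarks that the constant $\frac{N}{\delta}e^{\delta K^2/N^2}$ drops out and that the $\delta$ prefactors cancel are details the paper leaves implicit, but the argument is the same.
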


\begin{proof}
Observe that by linearity of the gradient operator, we have
$$
\nabla \wUdiag =
\nabla \left[
\frac{1}{\delta} \tiny{\sum\limits_{\begin{array}{cc} x \in \Zn \end{array}} }
\widehat E_{x,x}^{\delta}
\right]  =
\tiny{ \frac{1}{\delta}\sum\limits_{\begin{array}{cc} x \in \Zn \end{array}} }
\nabla \widehat E_{x,x}^\delta .
$$
By summing over the different cases for $x$ and applying Lemma~\ref{lemgradExy} , the claim follows.
\end{proof}

\begin{prop}\label{propequalnorm}
  Let $\Phi^\delta := \Psi + \Udiag$ be a function on $\M$, where $\Psi : \M \rightarrow [0, \infty)$ is any real analytic function that does not depend on the parameter $\delta$.  Let $G \in \M$ with no zero entries and suppose $\grad \Phi^\delta (G) = 0$ for all $\delta \in I$, where $I \subseteq (0, \infty)$ is an open interval, then $G$ is 
  the Gram matrix of an equal-norm Parseval frame.  

\end{prop}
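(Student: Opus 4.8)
The plan is to translate the hypothesis $\grad \Phi^{\delta}(G)=0$ into an entrywise algebraic identity in which all the $\delta$-dependence is carried by two exponentials, and then to kill those exponentials by linear independence. First I would apply Corollary~\ref{cor0gradchar} to replace $\grad \Phi^{\delta}(G)=0$ by $\nabla \widehat{\Phi}^{\delta}=0$ (suppressing the subscript $G$ and the argument $I$ as in the paper's convention). Since both the lift and the gradient are linear, and since $\Phi^{\delta}=\Psi+\Udiag$, we have $\nabla\widehat{\Phi}^{\delta}=\nabla\widehat{\Psi}+\nabla\wUdiag$. The crucial observation is that $\Psi$ does not depend on $\delta$ and $G$ is fixed, so $M:=\nabla\widehat{\Psi}$ is a single matrix independent of $\delta$. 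Substituting the explicit formula of Proposition~\ref{propgradUdiag} for $\nabla\wUdiag$, the hypothesis becomes, for every $a,b\in\Zn$ and every $\delta\in I$,
$$ M_{a,b}+2\,G_{a,b}\bigl(G_{a,a}\,e^{\delta |G_{a,a}|^2}-G_{b,b}\,e^{\delta |G_{b,b}|^2}\bigr)=0 , $$
where the diagonal entries $G_{a,a}=\|f_a\|^2$ are real, so $|G_{a,a}|^2=G_{a,a}^2$.

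The heart of the argument is then to read the left-hand side, for fixed $a,b$, as a function of $\delta$: it is a linear combination of $1$, $e^{\delta G_{a,a}^2}$ and $e^{\delta G_{b,b}^2}$ with coefficients $M_{a,b}$, $2G_{a,b}G_{a,a}$ and $-2G_{a,b}G_{b,b}$. The no-zero-entries hypothesis forces every diagonal entry to be strictly positive, so the three exponents $0$, $G_{a,a}^2$, $G_{b,b}^2$ include two strictly positive numbers. Suppose toward a contradiction that $G_{a,a}\neq G_{b,b}$ for some pair $a,b$; then $G_{a,a}^2\neq G_{b,b}^2$, the three exponents are pairwise distinct, and the three functions are linearly independent on the open interval $I$ (seen, for instance, by evaluating successive $\delta$-derivatives at one point of $I$ and inverting the resulting Vandermonde system). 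Since their combination vanishes identically on $I$, all three coefficients must vanish; in particular $2G_{a,b}G_{a,a}=0$. This contradicts $G_{a,b}\neq 0$ (no zero entries) and $G_{a,a}>0$. Hence $G_{a,a}=G_{b,b}$ for all $a,b$.

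To finish, recall that $G\in\M$ is already a rank-$K$ orthogonal projection, so it is the Gramian of a Parseval frame; the equality of all diagonal entries together with $\sum_{j}G_{j,j}=tr(G)=K$ forces $G_{a,a}=K/N$ for every $a$, i.e.\ $\|f_a\|^2=K/N$, so $G$ is equal-norm. I expect the only delicate point to be ensuring that $\nabla\widehat{\Psi}$ carries no hidden $\delta$-dependence, so that every $\delta$-variation genuinely sits in the two exponentials contributed by $\Udiag$; this is exactly what makes the linear-independence step applicable and is guaranteed by the assumption that $\Psi$ is independent of $\delta$. The secondary point to state with care is the strict positivity of the diagonal entries: this is precisely where the no-zero-entries hypothesis enters, ruling out the degenerate case $G_{a,a}=0$ that would collapse an exponent onto the constant term and break the independence argument.
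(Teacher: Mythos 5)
Your proposal is correct, and it shares the paper's entire setup: Corollary~\ref{cor0gradchar} to pass to the lifted gradient, the entrywise formula of Proposition~\ref{propgradUdiag}, and the exploitation of the $\delta$-independence of $\nabla\widehat\Psi$ together with the no-zero-entries hypothesis. Where you diverge is in the final analytic step. The paper differentiates the identity once in $\delta$, which annihilates the constant term $[\nabla\widehat\Psi]_{a,b}$ outright, then cancels the factor $2G_{a,b}$ and concludes $G_{a,a}=G_{b,b}$ from the strict monotonicity (hence injectivity) of $x\mapsto x^{3}e^{\delta x^{2}}$ on $\mathbb{R}_{+}$. You instead keep the constant term and read the identity as a vanishing linear combination of the three functions $1$, $e^{\delta G_{a,a}^{2}}$, $e^{\delta G_{b,b}^{2}}$; assuming $G_{a,a}\neq G_{b,b}$ makes the exponents pairwise distinct, so linear independence of exponentials forces all coefficients to vanish, contradicting $G_{a,b}\neq 0$ and $G_{a,a}>0$. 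Both routes are sound and of comparable length; your linear-independence argument has the mild advantage of also yielding $[\nabla\widehat\Psi]_{a,b}=0$ as a by-product and of packaging the "all $\delta$-dependence lives in the exponentials" intuition into a single standard fact, while the paper's differentiation trick avoids the contradiction framework and gives the equality $G_{a,a}=G_{b,b}$ directly for every pair. Two small points you should make explicit if you write this up: the coefficients $M_{a,b}$ and $G_{a,b}$ may be complex, so the linear-independence step should be applied to real and imaginary parts separately (or noted to hold with complex coefficients); and the positivity $G_{a,a}>0$ uses that $G$ is an orthogonal projection, whose diagonal entries $\|f_a\|^{2}$ are nonnegative, combined with the no-zero-entries hypothesis.
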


\begin{proof}
Recall from Proposition~\ref{propgradUdiag}  that each $(a,b)$ entry of $\nabla \wUdiag$ is given by 
$$
[ \nabla \wUdiag ]_{a,b} 
=
 2 G_{a,b} (G_{a,a} e^{\delta |G_{a,a}|^2 } - G_{b,b} e^{ \delta |G_{b,b}|^2}).
$$
Thus, by hypothesis and Corollary~\ref{cor0gradchar}, we have
$$
[ \nabla \widehat \Phi^\delta ]_{a,b} 
=
 [\nabla \widehat \Psi]_{a,b} + 2 G_{a,b} (G_{a,a} e^{\delta |G_{a,a}|^2 } - G_{b,b} e^{ \delta |G_{b,b}|^2})
= 0 , 
\, \, 
\forall a,b \in \Zn, \forall \delta \in I
.
$$
Since $[ \nabla \widehat \Phi^\delta]_{a,b} $ is constant for all $\delta \in I$ and since $\Psi$ does not depend on $\delta$, taking the derivative of this expression with respect to $\delta$ yields
\begin{align*}
\frac{d}{d \delta}[ \nabla \widehat \Phi^\delta ]_{a,b} 
&=
\frac{d}{d \delta} [\nabla \widehat \Psi  ]_{a,b} + \frac{d}{d \delta} 2 G_{a,b} (G_{a,a} e^{\delta |G_{a,a}|^2 } - G_{b,b} e^{ \delta |G_{b,b}|^2})
\\
&= \, \, \, \, \, \, \, \, \, \, 0 
\, \, \, \, \, \, \, \,  \, \, \, \, \, \, \, \, \, \, \, \, \, \, 
+
 2 G_{a,b} ( G_{a,a}^3 e^{\delta |G_{a,a}|^2} - G_{b,b}^3 e^{\delta |G_{b,b}|^2} )
\\
&= 0
\end{align*}
for all $a,b \in \Zn$ and for all $\delta \in I$.
Since $G$ contains no zero entries, we can cancel the factor $2 G_{a,b}$ in these equations to obtain
$$
G_{b,b}^3 e^{\delta G_{b,b}^2} =  G_{a,a}^3 e^{\delta G_{a,a}^2},
$$
for all $ a,b \in \Zn$ and all $ \delta \in I$.
By the strict monotonicity of the function $x \mapsto x^3 e^{\delta x^2}$ on $\mathbb R_{+}$,
this implies $G_{a,a} = G_{b,b}$ for all $a, b$ in ${\mathbb Z}_N$.
This is only possible if $G$ is equal-norm, so we are done.
\end{proof}

\subsubsection{The chain potential and equipartitioning}\label{sectionuch}

\begin{defn}\label{defRx} 
\label{defLx} 
\label{defUch} 
Let $G = (G_{a,b})_{a,b=1}^N  \in \M$.  
Given $x  \in \Zn$ and $\a, \b \in (0,\infty)$, we define  the \textem{exponential row sum potential} $R_{x}^{\a, \b}: \M \rightarrow \R$ by
\begin{align*}
R_{x}^{\a,\b} (G)  
&= \frac{1}{\a} \sum\limits_{j=1, \j \neq x}^N E_{x,j}^\a (G) + \b E_{x,x}^1 (G)\\
&= \frac{1}{\a} \sum\limits_{j=1, \j \neq x}^N  e^{\a |G_{x,j}|^2} + \b  e^{ |G_{x,x}|^2}
\, .
\end{align*}
We define  the \textem{link potential} $L_{x}^{\a, \b}: \M \rightarrow \R$ by
\begin{align*}
L_{x}^{\a,\b} (G)  
&=(  R_{x}^{\a, \b}(G) - R_{x+1}^{\a, \b}(G) )^2
\end{align*}
and the \textem{chain potential} $\Uch: \M \rightarrow \R$ by
\begin{align*}
\Uch(G) 
&= \sum_{j \in \Zn} L_j^{\a,\b} (G) \, .
\end{align*}
\end{defn}

Next, we compute the gradient of $\widehat R^{\alpha,\beta}_x$ at $I$.

\begin{lemma}\label{lemgradRx}
Let $G \in \M$, $\a \in (0, \infty)$ and  $x \in \{1,2,...,N \}$, and
let
$
   \widehat R^{\alpha,\beta}_x(U) = R^{\alpha,\beta}_x(UGU^*) ,
$  then the $(a,b)$ entry of the gradient of $\widehat{R}^{\alpha}_x$ at $I$ 
is given as follows: 
\\
\\
\begin{align*}
\left[ \nabla \widehat{R}^{\alpha,\beta}_x  \right]_{a,b} &= 
\tiny{\sum\limits_{\begin{array}{cc}j \in \Zn \\ j \neq a  \end{array}}} 
e^{\alpha |G_{x,j}|^2}
\left[
- G_{a,j} G_{j,x} \delta_{b,x}
+ G_{x,j} G_{j,b} \delta_{a,x}
- G_{a,x} G_{x,j} \delta_{b,j}
\right] \\
&+ 
2 \beta e^{ |G_{x,x}|^2}
\left[
G_{x,x} G_{x,b} \delta_{x,a}
- G_{a,x} G_{x,x} \delta_{b,x}
\right] \, .
\end{align*}
\end{lemma}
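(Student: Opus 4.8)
The plan is to reduce everything to the single-exponential gradients already computed in Lemma~\ref{lemgradExy} and to reassemble them using linearity of the gradient. Starting from the definition of $R_x^{\a,\b}$ in Definition~\ref{defUch} and using that $U \mapsto \Phi(UGU^*)$ depends linearly on $\Phi$, I would first record
\[
  \nabla \widehat{R}^{\a,\b}_x = \frac{1}{\a}\sum_{j \in \Zn,\, j \neq x} \nabla \widehat{E}^{\a}_{x,j} + \b\, \nabla \widehat{E}^{1}_{x,x} \, ,
\]
so that the computation splits into an off-diagonal part (the sum over $j \neq x$) and a single diagonal contribution coming from the $\b\, E_{x,x}^1$ term.

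For the off-diagonal part I would substitute the general entrywise formula of Lemma~\ref{lemgradExy} with $y=j$, observing that the prefactor $\tfrac{1}{\a}$ cancels precisely the $\a$ produced by each $\nabla \widehat{E}^{\a}_{x,j}$, leaving the weight $e^{\a|G_{x,j}|^2}$ in front of the four-term bracket furnished by the lemma. For the diagonal contribution I would invoke the $x=y$ specialization of the same lemma at parameter $1$, which yields $\b\,[\nabla \widehat{E}^{1}_{x,x}]_{a,b} = 2\b\, e^{|G_{x,x}|^2}\bigl(G_{x,x}G_{x,b}\delta_{x,a} - G_{a,x}G_{x,x}\delta_{b,x}\bigr)$; this already matches the last bracket in the claimed formula verbatim, so no further manipulation is needed there. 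Since Lemma~\ref{lemgradExy} was shown to hold with the same expression over both $\mathbb F=\mathbb C$ and $\mathbb F=\mathbb R$, the two field cases require no separate treatment.

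The remaining and most delicate step is the reorganization of the off-diagonal sum into the stated form. Here I would track each Kronecker delta separately: the term carrying $\delta_{j,a}$ is supported only at the index $j=a$, the deltas $\delta_{a,x}$ and $\delta_{b,x}$ decouple from the summation variable, and the delta $\delta_{b,j}$ is supported at $j=b$. Reconciling these supports with the summation constraint is what forces a shift of the index set and a careful accounting of the endpoint contributions. I expect this bookkeeping, namely keeping straight which delta is active for each value of $j$ and verifying that the boundary terms are absorbed correctly when the summation range is rewritten over $j \neq a$, to be the main obstacle; all of the analytic content is already contained in Lemma~\ref{lemgradExy}, and what remains is purely the combinatorial reindexing of the sum. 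Once the off-diagonal sum has been rewritten with its three surviving bracket terms and combined with the diagonal contribution computed above, the asserted expression for $[\nabla \widehat{R}^{\a,\b}_x]_{a,b}$ follows.
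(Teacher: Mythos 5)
Your first two paragraphs are exactly the paper's proof: the paper likewise decomposes $\nabla \widehat{R}^{\alpha,\beta}_x = \frac{1}{\alpha}\sum_{j\neq x}\nabla\widehat{E}^{\alpha}_{x,j} + \beta\,\nabla\widehat{E}^{1}_{x,x}$ by linearity, cites Lemma~\ref{lemgradExy}, and stops. The genuine problem is the step you defer to the end as ``purely combinatorial reindexing'': it cannot be completed, because the expression produced by the decomposition is not equal to the formula in the statement. What the decomposition gives entrywise is the full four-term bracket of Lemma~\ref{lemgradExy} summed over $j\neq x$,
\begin{equation*}
\sum_{j \in \Zn,\, j\neq x} e^{\alpha|G_{x,j}|^2}\left(G_{j,x}G_{x,b}\delta_{j,a} - G_{a,j}G_{j,x}\delta_{b,x} + G_{x,j}G_{j,b}\delta_{a,x} - G_{a,x}G_{x,j}\delta_{b,j}\right),
\end{equation*}
while the statement has a three-term bracket summed over $j\neq a$. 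When $a\neq x$, the $\delta_{j,a}$ term survives at $j=a$ and contributes $e^{\alpha|G_{x,a}|^2}G_{a,x}G_{x,b}$, which has no counterpart in the stated formula; in addition, the swap of index set from $j\neq x$ to $j\neq a$ shifts the contributions of the $\delta_{b,x}$ and $\delta_{b,j}$ terms at the endpoints $j=a$ and $j=x$.

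That this discrepancy is real and not absorbed by a hidden cancellation can be checked in a small example: take $N=3$, $K=1$, $G_{j,l}=c_jc_l$ for a real unit vector with distinct positive entries $c_1,c_2,c_3$, and take $x=3$, $(a,b)=(1,2)$. Differentiating $\widehat{R}^{\alpha,\beta}_3$ along $\gamma(t)=e^{tT_{1,2}}Ge^{-tT_{1,2}}$ gives $[\nabla\widehat{R}^{\alpha,\beta}_3]_{1,2}=c_1c_2c_3^2\,(e^{\alpha c_1^2c_3^2}-e^{\alpha c_2^2c_3^2})$, which matches the four-term sum over $j\neq x$, whereas the formula in the statement evaluates to $-c_1c_2c_3^2\,e^{\alpha c_2^2c_3^2}$. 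So the statement as printed is inconsistent with Definition~\ref{defUch} and Lemma~\ref{lemgradExy}, and the paper's own proof masks this by silently writing the summation index as $j\neq a$ rather than the $j\neq x$ demanded by the definition of $R^{\alpha,\beta}_x$. Your instinct that the reindexing is the crux was correct, but it is not bookkeeping you left out: carried out honestly, your argument proves the four-term formula displayed above plus the $\beta$-term, not the formula in the statement, and no reorganization of the sum will convert one into the other.
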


\begin{proof}
This computation follows immediately from Lemma~\ref{lemgradExy} by observing that, because of linearity of the gradient operator, we have
\begin{align*}
\nabla \widehat{R}^{\alpha,\beta}_x  
= \nabla \left[  \frac{1}{\a} \tiny{\sum\limits_{\begin{array}{cc}j \in \Zn \\ j \neq a  \end{array}}} \widehat E_{x,j}^{\a}  + \b \widehat E_{x,x}^{1} \right] 
= \frac{1}{\a} \tiny{\sum\limits_{\begin{array}{cc}j \in \Zn \\ j \neq a  \end{array}}} \nabla \widehat E_{x,j}^{\a}   + \b  \nabla \widehat E_{x,x}^{1} .
\\
\end{align*} 

\end{proof}

\begin{lemma}\label{lemgradLx}
Let $G \in \M$ and $\a, \b \in (0, \infty)$.  Furthermore, let  $x, a  \in\Zn$ and set $b=a+1$. Let
$
 \widehat L^{\alpha,\beta}_x(U) = L^{\alpha,\beta}_x(UGU^*) \, ,
$
then the $(a, b)$-entry (ie, along the superdiagonal) of the gradient of $\widehat L^{\alpha,\beta}_x$ at $I$ is given as:
\begin{align*}
\left[ \nabla \widehat L^{\alpha,\beta}_x  \right]_{a,b} 
&= 2( \widehat R_x^{\a,\b}  - \widehat R_{x+1}^{\a,\b} ) 
\\
& \times \left\{
\tiny{\sum\limits_{\begin{array}{cc}j \in \Zn \\ j \neq a  \end{array}}} 
\left[e^{\alpha |G_{x,j}|^2}
\left(
- G_{a,j} G_{j,x} \delta_{b,x}
+ G_{x,j} G_{j,b} \delta_{a,x}
- G_{a,x} G_{x,j} \delta_{b,j}
\right)
\right. 
\right. \\
&    \indent \indent - \left.
e^{\alpha |G_{x+1,j}|^2}
\left(
- G_{a,j} G_{j,x+1} \delta_{b,x+1}
+ G_{x+1,j} G_{j,b} \delta_{a,x+1}
- G_{a,x+1} G_{x+1,j} \delta_{b,j}
\right)
\right] 
\\
&+ 
2 \beta 
\left[
e^{ |G_{x,x}|^2}
\left(
G_{x,x} G_{x,b} \delta_{x,a}
- G_{a,x} G_{x,x} \delta_{b,x}
\right)
\right.
\\
&\left . \left.
\indent 
 -
e^{ |G_{x+1,x+1}|^2}
\left(
G_{x+1,x+1} G_{x+1,b} \delta_{x+1,a}
- G_{a,x+1} G_{x+1,x+1} \delta_{b,x+1}
\right)
\right]
\vphantom{\tiny{\sum\limits_{\begin{array}{cc}j \in \Zn \\ j \neq a  \end{array}}} } \right\} \, .
\end{align*}
\end{lemma}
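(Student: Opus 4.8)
The plan is to reduce the entire computation to the row-sum gradient already obtained in Lemma~\ref{lemgradRx}, exploiting the fact that $\widehat L^{\alpha,\beta}_x$ is nothing but the square of the scalar $\widehat R^{\alpha,\beta}_x-\widehat R^{\alpha,\beta}_{x+1}$. Since $L_x^{\alpha,\beta}(G)=\bigl(R_x^{\alpha,\beta}(G)-R_{x+1}^{\alpha,\beta}(G)\bigr)^2$ and the lift $\Phi\mapsto\widehat\Phi_{G}=\Phi\circ\Pi_{G}$ commutes with pointwise algebraic operations, we have, as functions on $U(N)$ (respectively $O(N)$),
$$
\widehat L_x^{\alpha,\beta}(U)=\bigl(\widehat R_x^{\alpha,\beta}(U)-\widehat R_{x+1}^{\alpha,\beta}(U)\bigr)^2 .
$$
First I would invoke the elementary chain rule for gradients of a scalar composed with $t\mapsto t^2$: if $g$ is differentiable and $\psi=g^2$, then $d\psi(v)=2g\,dg(v)$ for every tangent direction $v$, so that $\nabla\psi=2g\,\nabla g$. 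Applying this with $g=\widehat R_x^{\alpha,\beta}-\widehat R_{x+1}^{\alpha,\beta}$ and evaluating at $I$ (suppressing the argument as per our convention) gives
$$
\nabla\widehat L_x^{\alpha,\beta}=2\bigl(\widehat R_x^{\alpha,\beta}-\widehat R_{x+1}^{\alpha,\beta}\bigr)\bigl(\nabla\widehat R_x^{\alpha,\beta}-\nabla\widehat R_{x+1}^{\alpha,\beta}\bigr),
$$
where the prefactor is the scalar $R_x^{\alpha,\beta}(G)-R_{x+1}^{\alpha,\beta}(G)$ and the linearity of $\nabla$ lets me split the second factor as written.

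Next I would substitute the explicit entrywise formula from Lemma~\ref{lemgradRx} for both $\nabla\widehat R_x^{\alpha,\beta}$ and $\nabla\widehat R_{x+1}^{\alpha,\beta}$, the latter obtained merely by replacing the index $x$ by $x+1$ throughout. Reading off the $(a,b)$ entry of the resulting linear combination reproduces exactly the bracketed expression in the statement: the sum over $j\neq a$ carrying the factors $e^{\alpha|G_{x,j}|^2}$ and $-e^{\alpha|G_{x+1,j}|^2}$ descends from the off-diagonal $E_{x,j}^\alpha$ part of $R_x^{\alpha,\beta}$, while the two $\beta$-weighted terms descend from the diagonal $E_{x,x}^1$ part. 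Setting $b=a+1$ specializes this to the superdiagonal; I would remark that the derivation is identical for arbitrary $(a,b)$, and the restriction is imposed only because these are the entries used in the subsequent analysis.

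The argument is essentially bookkeeping, so there is no deep obstacle. The one step requiring genuine care is correctly propagating the index shift $x\mapsto x+1$ through the Kronecker deltas supplied by Lemma~\ref{lemgradRx}, and keeping the two blocks---the $\alpha$-sum over $j\neq a$ and the $\beta$-diagonal contribution---cleanly separated when forming $\nabla\widehat R_x^{\alpha,\beta}-\nabla\widehat R_{x+1}^{\alpha,\beta}$. I would also note in passing that the scalar chain-rule step tacitly uses smoothness of the lift on the compact group, which is immediate because $R_x^{\alpha,\beta}$ is real analytic on $\M$ and $\Pi_{G}$ is smooth; this same smoothness justifies the linear combination of the gradient formulas.
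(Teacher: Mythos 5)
Your proposal is correct and follows essentially the same route as the paper: write $\widehat L^{\alpha,\beta}_x$ as the square of $\widehat R^{\alpha,\beta}_x-\widehat R^{\alpha,\beta}_{x+1}$, apply the scalar chain rule (the paper uses $h(t)=t^2$) together with linearity of the gradient, and then substitute the entrywise formula of Lemma~\ref{lemgradRx} with the index shift $x\mapsto x+1$. The only difference is that you spell out the bookkeeping of that substitution, which the paper compresses into ``The rest follows by Lemma~\ref{lemgradRx}.''
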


\begin{proof}

If we let $h(t) = t^2$, then we see that $$\Lx (G) =( \Rx (G) - R_{x+1}^{\a,\b} (G) )^2 = h( \Rx (G) - R_{x+1}^{\a,\b} (G) ) .$$  Therefore, by applying the chain rule and linearity of the gradient operator, we see that
\begin{align*}
\nabla \widehat L^{\alpha,\beta}_x  
&= h'( R^{\alpha,\beta}_x (G)  - R_{x+1}^{\a,\b}  (G)) \nabla ( \widehat
R^{\alpha,\beta}_x - \widehat R_{x+1}^{\a,\b} ) 
\\
&= 2(  R^{\alpha, \beta}_x (G)  -  R_{x+1}^{\a,\b} (G) ) ( \nabla 
\widehat R^{\alpha,\beta}_x - \nabla \widehat R_{x+1}^{\a,\b} ) 
\end{align*}
The rest follows by Lemma~\ref{lemgradRx}.
\end{proof}

For notational convenience, we define $\varphi_\alpha : \Zn^3 \times \M \rightarrow \C$  for $\alpha>0$ by
\begin{align*}
\varphi_\alpha (a,b,x,G) 
&=
 \tiny{\sum\limits_{\begin{array}{cc}j \in \Zn \\ j \neq a  \end{array}}} 
\left[e^{\alpha |G_{x,j}|^2}
\left(
- G_{a,j} G_{j,x} \delta_{b,x}
+ G_{x,j} G_{j,b} \delta_{a,x}
- G_{a,x} G_{x,j} \delta_{b,j}
\right)
\right. \\
&    \indent \indent - \left.
e^{\alpha |G_{x+1,j}|^2}
\left(
- G_{a,j} G_{j,x+1} \delta_{b,x+1}
+ G_{x+1,j} G_{j,b} \delta_{a,x+1}
- G_{a,x+1} G_{x+1,j} \delta_{b,j}
\right)
\right] \, .
 \end{align*}

\begin{prop}\label{propgradUch}
Let $G \in \M$, $\a, \b,  \in (0, \infty)$.  Let $a \in \Zn$ and set $b=a+1$, so that $(a,b)$ entry of $G$ falls on the superdiagonal, and let $ 
  \wUch^{\alpha,\beta}(U) = \Uch(UGU^*),
$ 
then the $(a,b)$ entry of the gradient of $\wUch^{\a,\b}$ is given as follows: 
\\
\\
\begin{align*}
\left[ \nabla \wUch^{\alpha,\beta}   \right]_{a,b} 
&= 
2  \sum\limits_
{
\tiny{\begin{array}{cc}
j \in \Zn 
\end{array}
}
}
\left( R_j^{\a,\b} (G) -   R_{j+1}^{\a,\b} (G) \right) \varphi_\alpha(a,b,x, G)  \\
&+
4 \beta \left\{ -  G_{a,a} G_{a,b} e^{|G_{a,a}|^2} [  R_{a-1}^{\a,\b} (G) -   R_{a}^{\a,\b} (G)] \right. \\
&\indent \indent + (G_{a,a} G_{a,b} e^{|G_{a,a}|^2}  +  G_{a,b} G_{b,b} e^{|G_{b,b}|^2}   ) [  R_{a}^{\a,\b} (G) -   R_{b}^{\a,\b} (G)]  \\
&\left. \indent \indent - G_{a,b} G_{b,b} e^{|G_{b,b}|^2}  [  R_{b}^{\a,\b} (G) -   R_{b+1}^{\a,\b} (G)]  \right\} \, .
\end{align*}

\end{prop}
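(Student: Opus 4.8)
The plan is to combine the linearity of the gradient with the per-link formula of Lemma~\ref{lemgradLx}. Since $\Uch(G) = \sum_{j \in \Zn} L_j^{\a,\b}(G)$ and the lift $U \mapsto \Phi(UGU^*)$ respects finite sums, we have $\nabla \wUch^{\a,\b} = \sum_{x \in \Zn} \nabla \widehat L_x^{\a,\b}$. Hence the $(a,b)$ entry along the superdiagonal, where $b = a+1$, is obtained by summing the expression of Lemma~\ref{lemgradLx} over all $x \in \Zn$, and the task reduces to organizing that sum into the $\alpha$-part and the $\beta$-part appearing in the claim.

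First I would record the $\alpha$-dependent contributions. By the very definition of $\varphi_\alpha$, the leading summand of $[\nabla \widehat L_x^{\a,\b}]_{a,b}$ is exactly $2(\widehat R_x^{\a,\b} - \widehat R_{x+1}^{\a,\b})\,\varphi_\alpha(a,b,x,G)$, so summing over $x$ reproduces the first line of the stated formula with no further manipulation, using that $\widehat R_j^{\a,\b}$ evaluated at $I$ equals $R_j^{\a,\b}(G)$.

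The substance of the computation lies in collecting the $\beta$-dependent (diagonal) contributions. For fixed $x$, these carry the four Kronecker symbols $\delta_{x,a}$, $\delta_{b,x}$, $\delta_{x+1,a}$, and $\delta_{b,x+1}$. Exploiting $b = a+1$, one checks that $\delta_{x,a}$ forces $x = a$, that $\delta_{b,x}$ forces $x = a+1$, that $\delta_{x+1,a}$ forces $x = a-1$, and that $\delta_{b,x+1}$ forces $x = a$; consequently the sum over $x$ collapses onto the three indices $x \in \{a-1, a, a+1\}$. Evaluating each term separately: at $x = a-1$ only the $\delta_{x+1,a}$ contribution survives and yields $-4\beta\,G_{a,a} G_{a,b}\, e^{|G_{a,a}|^2}(\widehat R_{a-1}^{\a,\b} - \widehat R_{a}^{\a,\b})$; at $x = a$ the $\delta_{x,a}$ and $\delta_{b,x+1}$ contributions survive and together yield $4\beta(G_{a,a} G_{a,b}\, e^{|G_{a,a}|^2} + G_{a,b} G_{b,b}\, e^{|G_{b,b}|^2})(\widehat R_{a}^{\a,\b} - \widehat R_{b}^{\a,\b})$; and at $x = a+1 = b$ only the $\delta_{b,x}$ contribution survives and yields $-4\beta\,G_{a,b} G_{b,b}\, e^{|G_{b,b}|^2}(\widehat R_{b}^{\a,\b} - \widehat R_{b+1}^{\a,\b})$. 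Summing these three contributions and substituting $R_j^{\a,\b}(G)$ for $\widehat R_j^{\a,\b}$ produces precisely the $\beta$-block of the claim.

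I expect the main obstacle to be the bookkeeping of these Kronecker deltas together with the cyclic index arithmetic modulo $N$: because a single link $L_x$ involves both $R_x$ and $R_{x+1}$, each surviving value of $x$ contributes a distinct diagonal prefactor paired with a distinct difference $\widehat R_x^{\a,\b} - \widehat R_{x+1}^{\a,\b}$, and the signs inherited from Lemma~\ref{lemgradLx} must be propagated without error. A minor supporting observation is that the diagonal entries of a projection are real, so that $e^{|G_{a,a}|^2} = e^{G_{a,a}^2}$ and the diagonal prefactors behave exactly as in the simplified form of $\nabla \widehat E^\alpha_{x,x}$ from Lemma~\ref{lemgradExy}.
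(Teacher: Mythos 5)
Your proposal is correct and follows essentially the same route as the paper: linearity of the gradient reduces the claim to summing Lemma~\ref{lemgradLx} over all links, and the $\beta$-dependent terms collapse via the Kronecker deltas onto the three indices $x \in \{a-1, a, a+1\}$, exactly as the paper's proof indicates. Your explicit bookkeeping of which delta survives at each of the three values of $x$ matches the stated formula term by term.
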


\begin{proof}
Observe that
$$
\nabla \wUch^{\alpha,\beta}  =
\nabla \left[
\tiny{\sum\limits_{\begin{array}{cc} j \in \Zn \end{array}} }
\widehat L_j^{\a,\b}
\right] =
\tiny{\sum\limits_{\begin{array}{cc} j \in \Zn \end{array}} }
\nabla \widehat L_j^{\a,\b}.
$$
By summing over the different cases for $j$ and using Lemma~\ref{lemgradLx}, the claim follows, where we have isolated the nonzero terms which are multiplied by the parameter $\beta$ (i.e., corresponding to $j=a-1$, $j=a$, and $j=a+1$).
\end{proof}


\begin{prop}\label{propalphaequipart}
Let $\Phi^{\a, \b} := \Psi + \Uch$ be a function on $\M$, where $\Psi : \M \rightarrow [0, \infty)$ is any real analytic function that does not depend on the  parameters $\a$ or $\b$.  Let $G \in \M$ be equal-norm with no zero entries, fix $\a\in (0, \infty)$, and suppose that $\grad \Phi_{ch}^{\a, \b} (G) = 0$ for all $\beta \in J $, where $J \subseteq (0, \infty)$ is an open interval, then $G$ is $\a$-equipartitioned.

\end{prop}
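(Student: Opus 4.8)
The plan is to transfer the hypothesis to the unitary (or orthogonal) group via Corollary~\ref{cor0gradchar}, so that $\grad \Phi^{\a,\b}(G)=0$ becomes $\grad \widehat\Phi^{\a,\b}(I)=0$, and then to read off information from a single family of matrix entries, namely the superdiagonal ones $(a,a+1)$, for which Proposition~\ref{propgradUch} supplies a closed formula. Writing $\Phi^{\a,\b}=\Psi+\Uch$ and using linearity of the gradient, the vanishing of the $(a,a+1)$ entry becomes
$$[\grad\widehat\Psi]_{a,a+1}+[\grad\wUch^{\a,\b}]_{a,a+1}=0\qquad\text{for all }\b\in J,$$
where $[\grad\widehat\Psi]_{a,a+1}$ is constant in $\b$ because $\Psi$ does not depend on $\b$.

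The decisive observation is that the equal-norm property makes the link differences independent of $\b$. Setting $d_j:=R_j^{\a,\b}(G)-R_{j+1}^{\a,\b}(G)$, the diagonal contributions $\b\,e^{|G_{j,j}|^2}$ and $\b\,e^{|G_{j+1,j+1}|^2}$ to $R_j^{\a,\b}$ and $R_{j+1}^{\a,\b}$ coincide (since $|G_{j,j}|^2=K^2/N^2$ for every $j$) and cancel, leaving $d_j=\tfrac1\a\sum_{k\neq j}\bigl(e^{\a|G_{j,k}|^2}-e^{\a|G_{j+1,k}|^2}\bigr)$, which carries no $\b$. The auxiliary quantity $\varphi_\a(a,a+1,j,G)$ is likewise $\b$-free. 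Hence, by Proposition~\ref{propgradUch}, $[\grad\wUch^{\a,\b}]_{a,a+1}$ is \emph{affine} in $\b$: its $\b$-independent part equals $2\sum_j d_j\,\varphi_\a(a,a+1,j,G)$, while substituting $b=a+1$ and $G_{a,a}=G_{b,b}=K/N$ into the $\b$-weighted bracket collapses the three surviving link terms (those indexed by $j=a-1,a,a+1$) into a discrete second-difference, so that the coefficient of $\b$ is $4\,(K/N)\,e^{K^2/N^2}\,G_{a,a+1}\,(-d_{a-1}+2d_a-d_{a+1})$.

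Now I would exploit that an affine function of $\b$ which vanishes on the open interval $J$ must have a vanishing $\b$-coefficient. This gives
$$4\,(K/N)\,e^{K^2/N^2}\,G_{a,a+1}\,(-d_{a-1}+2d_a-d_{a+1})=0\qquad\text{for every }a\in\Zn.$$
Since $K/N>0$, $e^{K^2/N^2}>0$, and $G$ has no zero entries so that $G_{a,a+1}\neq 0$, we may cancel these factors to obtain the cyclic relation $2d_a=d_{a-1}+d_{a+1}$ for all $a$. On the cycle $\Zn$ the kernel of the discrete Laplacian consists precisely of the constant sequences, and because $\sum_{a\in\Zn}d_a=\sum_a\bigl(R_a^{\a,\b}(G)-R_{a+1}^{\a,\b}(G)\bigr)$ telescopes to zero, that constant must be zero; hence $d_a=0$, i.e. $R_a^{\a,\b}(G)=R_{a+1}^{\a,\b}(G)$ for every $a$. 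Equality of consecutive row potentials propagates around the cycle, and since the diagonal terms are all equal, it yields $A_x^\a=A_y^\a$ for all $x,y\in\Zn$, which is exactly the assertion that $G$ is $\a$-equipartitioned (Definition~\ref{defequipart}).

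The step I expect to be the main obstacle is the algebraic bookkeeping behind the affine-in-$\b$ structure: one must verify that, after the equal-norm substitution, the three $\b$-weighted terms in Proposition~\ref{propgradUch} recombine \emph{exactly} into $-d_{a-1}+2d_a-d_{a+1}$, and that the remaining (non-$\b$) part of the gradient indeed carries no hidden $\b$-dependence. Once this combination is correctly identified, deducing $d_a\equiv 0$ from the vanishing cyclic Laplacian together with the telescoping constraint is routine, as is the final translation back to the equipartition condition.
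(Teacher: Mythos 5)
Your proposal is correct and follows essentially the same route as the paper: lift via Corollary~\ref{cor0gradchar}, use the equal-norm property to make the link differences $d_j = R_j^{\a,\b}(G)-R_{j+1}^{\a,\b}(G)$ independent of $\b$, isolate the coefficient of $\b$ in the superdiagonal gradient entries (the paper does this by differentiating in $\b$, which is the same as your affine-in-$\b$ argument), and cancel the nonzero factors $4\tfrac{K}{N}e^{K^2/N^2}G_{a,a+1}$ to obtain a cyclic linear system. The only variation is the final linear algebra: the paper diagonalizes the circulant matrix $-3I+3S-S^2+S^{N-1}$ acting on the vector of $R_a^\a(G)$'s and factors its eigenvalues as $-(1-\omega_j)(2-\omega_j-\omega_j^{N-1})$, whereas you pass to the differences $d_a$ and invoke the kernel of the cyclic discrete Laplacian together with the telescoping identity $\sum_a d_a = 0$ --- which is the same computation in disguise, since the factor $(1-\omega_j)$ encodes the differencing $R\mapsto d$ and $(2-\omega_j-\omega_j^{N-1})$ is precisely the Laplacian symbol.
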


\begin{proof}
Since $\Psi$ does not depend on $\b$ and since $\nabla \Phi_{ch}^{\a, \b} =0$ for all $\beta \in J$, then using corollary~\ref{cor0gradchar} and taking the partial derivative with respect to $\b$ of an $(a,b)$ entry of $\nabla \widehat \Phi_{ch}^{\a, \b}$ gives
\begin{align*}
\frac{d}{d \b} [\nabla \widehat \Phi_{ch}^{\a, \b}  ]_{a,b}
&=
\frac{d}{d \b}  [\nabla \widehat \Psi (G) ]_{a,b}       +      \frac{d}{d \b}   [\nabla \wUch  ]_{a,b}
\\
&=
\, \, \, \, \, \, \, \, \, \, 0 \, \, \, \, \, \, \, \, \, \, \, \,   \, \, \, \, \, \, \, \, \, \, \, \,        +      \frac{d}{d \b}   [\nabla \wUch^{\a,\b} ]_{a,b}
\\
&= 0
\end{align*}
for all $\b \in J$.
In particular, we have $\frac{d}{d \b}   [\nabla \wUch^{\a,\b} ]_{a,b} = 0 $ for all $a,b \in \Zn$ and for all $\b \in J$.  

 Next, we  compute $\frac{d}{d \b}   [\wUch^{\a,\b} ]_{a,b}$ for the case where $b=a+1$ (ie, along the superdiagonal), thereby inducing a set of equations which will lead to the desired result. So, from here on, we suppose that $b=a+1$. 

First, we observe a simplification that results from the assumption that $G$ is equal-norm.  Referring back to Proposition~\ref{propgradUch}, we note that every additive term of  $[\nabla \wUch ]_{a,b}$ has a factor of the form $(R_j^{\a, \b}(G) - R_{j+1}^{\a,\b} (G) ).$  However, since $G$ is equal-norm, we can replace each of these factors with $(R_j^{\a}(G) - R_{j+1}^{\a} (G) )$ to denote the fact that the $\beta$ terms corresponding to the diagonal entries have canceled because of the equal-norm property.  After doing this, we see that there are only three terms of $[\nabla \wUch^{\a,\b}  ]_{a,b}$ which still depend on $\beta$.  Now the desired partial derivative is easy to compute, which yields
\begin{align*}
\frac{d}{d \b}   [\nabla \wUch  ]_{a,b}
&=
- 4 G_{a,a} G_{a,b} e^{|G_{a,a}|^2} [  R_{a-1}^{\a,\b} (G) -   R_{a}^{\a,\b} (G)] \\
& \indent +4 (G_{a,a} G_{a,b} e^{|G_{a,a}|^2}  +  G_{a,b} G_{b,b} e^{|G_{b,b}|^2}   ) [  R_{a}^{\a,\b} (G) -   R_{b}^{\a,\b} (G)]  \\
& \indent - 4G_{a,b} G_{b,b} e^{|G_{b,b}|^2}  [  R_{b}^{\a,\b} (G) -   R_{b+1}^{\a,\b} (G)  \\
&=0
\end{align*}
Once again, because $G$ is equal-norm, it follows that $G_{a,a} = G_{b,b} = \frac{K}{N}$, so this equation can be rewritten as 
\begin{align*}
\frac{d}{d \b}   [\nabla \wUch  ]_{a,b}
&=
-4 \frac{K}{N} G_{a,b} e^{ |\frac{K}{N}|^2} \left(  R_{a-1}^\a (G) - 3R_{a}^\a (G) + 3R_{b}^\a(G) - R_{b+1}^\a (G)   \right)
\\
&=0 \, .
\end{align*}
Since $G$ contains no zero entries, we can cancel the factor(s) $-4 \frac{K}{N} G_{a,b} e^{ |\frac{K}{N}|^2}$ to obtain
$$R_{a-1}^\a (G) - 3R_{a}^\a (G) + 3R_{b}^\a(G) - R_{b+1}^\a (G) =0$$
for all $a,b \in \Zn$, where $b=a+1$.  This induces the linear system $Ax = 0$, where $A$ is the  $N \times N$ circulant matrix
$$
A = \left(\begin{array}{ccccccc}
-3 & 3 & - 1 & 0     & \cdots & 0 & 1\\
 1  &-3& 3    & - 1  & \cdots & 0 & 0\\
0 & 1 & -3 & 3     & \cdots & 0 & 0\\
 \vdots &  \vdots &  \vdots     &  \vdots   &  \vdots  &  \vdots  &  \vdots \\
3 & -1 & 0 & 0     & \cdots & 1 & -3\\
\end{array} \right)
$$
and 
$$
x
=
\left(
\begin{array}{cc}
R^\a_1 (G) \\
R^\a_2 (G)\\
\vdots \\
R^\a_N(G)  
\end{array}
\right) .
$$
The circulant matrix $A$ is the polynomial 
$A=-3I+3S-S^2+S^{N-1}$ of the cyclic shift matrix $S$.
Therefore, its eigenvectors coincide with those of $S$,
$$
v_j = \left(
\begin{array}{cc}
1 \\
\omega_j \\
\vdots \\
\omega_j^{N-1}
\end{array}
\right)
$$  
and by the spectral theorem the eigenvalues of $A$ are then given by
$$
\lambda_j = -3 + 3 \omega_j -  \omega_j^2 +\omega_j^{N-1},\, \, j \in \Zn,
$$
where
$\omega_j= e^{\frac{2 \pi i j }{N}}$, the $N$th roots of unity, are the corresponding eigenvalues of $S$.

The system $Ax=0$ is homogeneous, so we would like to obtain the zero eigenspace of $A$.  By letting $j \in \Zn$, setting $\lambda_j=0$, and then factoring, we obtain
\begin{align*}
0 
&= -3 + 3 \omega_j -  \omega_j^2 +\omega_j^{N-1} \\
&=  -(1 - \omega_j ) ( 2 - \omega_j - \omega_j^{N-1}
 ) \, .
\end{align*}
Inspecting both factors, we see that $\lambda_j = 0$ iff $\omega_j=1$ iff $j \equiv 0 \mod N$.  Thus, the zero eigenspace is $1$-dimensional and spanned by the vector of all ones.  In particular, this shows that
$$
R^\a_1 (G) =
R^\a_2 (G) =
\cdots \\
= R^\a_N(G).  
$$
In other words, $G$ is $\a$-equipartitioned.
\end{proof}

\begin{cor}\label{corequidistribution}
Let $\Phi^{\a, \b} := \Psi + \Uch$ be a function on $\M$, where $\Psi : \M \rightarrow [0, \infty)$ is any real analytic function that does not depend on the parameters $\a$ or $\b$.  Let $G \in \M$ be equal-norm with no zero entries, and, furthermore, suppose that $\grad \Phi_{ch}^{\a, \b} (G) = 0$ for all $\a \in I$ and all $\beta \in J$, where $I, J \subseteq (0, \infty)$ are open intervals, then $G$ is equidistributed.

\end{cor}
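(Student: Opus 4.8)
The plan is to obtain this corollary by combining the two substantial results already established: Proposition~\ref{propalphaequipart}, which converts a vanishing-gradient condition into $\a$-equipartitioning for a single fixed $\a$, and Proposition~\ref{propequipartfonopen}, which upgrades $\a$-equipartitioning over an open interval of parameters to full equidistribution. No fresh computation with the gradient formulas should be needed; the remaining work is a quantifier-management argument that feeds the output of one proposition into the hypothesis of the other.

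First I would fix an arbitrary $\a \in I$ and verify that the hypotheses of Proposition~\ref{propalphaequipart} hold at this value of $\a$. Indeed, $\Psi$ is the same real analytic, nonnegative function independent of $\a$ and $\b$; the Gram matrix $G$ is assumed equal-norm with no zero entries; and by assumption $\grad \Phi^{\a,\b}(G)=0$ for every $\b$ in the open interval $J$. Proposition~\ref{propalphaequipart} then yields that $G$ is $\a$-equipartitioned for this particular $\a$.

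Since $\a \in I$ was arbitrary, the previous step shows that $G$ is $\a$-equipartitioned for every $\a$ in the open interval $I$. I would then invoke Proposition~\ref{propequipartfonopen} directly: being $\a$-equipartitioned for all $\a$ in an open interval is equivalent to being equidistributed, so $G$ is equidistributed, as claimed.

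The argument is short precisely because the two genuinely technical steps—the circulant-matrix eigenvalue computation that isolates the all-ones zero eigenvector in Proposition~\ref{propalphaequipart}, and the analytic-continuation argument that successively peels off the dominant growth rates in Proposition~\ref{propequipartfonopen}—have already been carried out. The only point demanding a moment of care is the order of the quantifiers: Proposition~\ref{propalphaequipart} is applied with $\a$ held fixed while $\b$ ranges over $J$, and only afterward is $\a$ allowed to vary over $I$. One should confirm that the single fixed $G$ satisfies all of these hypotheses simultaneously, which it does since $G$ does not change as $\a$ and $\b$ vary. Thus there is no real obstacle, and the result is an immediate synthesis of the two propositions.
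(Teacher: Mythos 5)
Your proof is correct and follows exactly the paper's own argument: apply Proposition~\ref{propalphaequipart} for each fixed $\a \in I$ (using that $J$ is an open interval) to get $\a$-equipartitioning, then invoke Proposition~\ref{propequipartfonopen} to upgrade equipartitioning over the open interval $I$ to equidistribution. The extra care you take with the quantifier order is a sound elaboration of what the paper leaves implicit.
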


\begin{proof}
Since $J$ is an open interval, it follows by proposition \ref{propalphaequipart} that $G$ is $\a$-equipartitioned for every $\a \in I$.  Therefore, by 
Proposition~\ref{propequipartfonopen}, $G$ is equidistributed.
\end{proof}

\subsubsection{A characterization of equidistributed frames}\label{sectionumain}

Finally, we combine these definitions to obtain the family of potential functions which will yield our main theorem, as defined below.

\begin{defn}\label{defUmain} 
Let $G = (G_{a,b})_{a,b=1}^N  \in \M$.  Given $\a, \b, \delta, \eta \in (0,\infty)$, we define  the \textem{combined potential}, $\Umain: \M \rightarrow \R$, by
\begin{align*}
\Umain(G) 
&= \Uch(G) + \Udiag(G) + \Usum(G). 
\end{align*}
\end{defn}

\jh{
\begin{defn}\label{deffamwise}
Let  $G \in \M$, let $I, J, T \subseteq (0,\infty)$ 
be open intervals and
$\eta >0$, then we say that $G$ is a \textem{family-wise critical point} 
with respect to $\{\Phi^{\alpha,\beta,\delta,\eta}\}_{\alpha \in I, \beta \in J, \delta \in T}$
if $\nabla \Umain (G) = 0$ for all $\a \in I$, $\b \in J$, and $\delta \in T$.
\end{defn}
}

\jh{

\begin{thm}\label{thmmain} 
Let $G \in \M$, let $I, J, T \subseteq (0,\infty)$ 
be open intervals and $\eta>0$.  If $G$ is a family-wise critical point with respect to $\{\Phi^{\alpha,\beta,\delta,\eta}\}_{\alpha \in I, \beta \in J, \delta \in T}$
and $$\Phi^{\a', \b', \delta', \eta}(G) <2 + (N^2-2) e^{\eta( K/(N^2-2) - K^2/N(N^2-2) + K(N-K)/(N(N-1)(N^2-2))}$$ for some $\a', \b', \delta' \in (0, \infty)$, then $G$ is equidistributed.
\end{thm}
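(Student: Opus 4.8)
The plan is to chain together the three structural results of Sections~\ref{sectionusum}, \ref{sectionudiag}, and \ref{sectionuch}, exploiting the fact that each of the three constituent potentials $\Uch$, $\Udiag$, and $\Usum$ is nonnegative on $\M$ in order to decouple the hypotheses on $G$.

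First I would record the nonnegativity of the three summands. Since $\Uch(G) = \sum_{j \in \Zn} L_j^{\a,\b}(G)$ is a sum of squares it is nonnegative, and $\Usum(G)$ is a sum of exponentials and hence strictly positive. For $\Udiag$, Jensen's inequality applied to the convex map $t \mapsto e^{\delta t}$ gives $\frac{1}{N}\sum_{j} e^{\delta G_{j,j}^2} \ge e^{\delta \cdot \frac{1}{N}\sum_j G_{j,j}^2}$, and the Cauchy-Schwarz estimate $\frac{1}{N}\sum_j G_{j,j}^2 \ge (\frac{1}{N}\sum_j G_{j,j})^2 = K^2/N^2$ then yields $\Udiag(G) \ge 0$. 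Consequently $\Umain(G) \ge \Usum(G)$ for every admissible choice of parameters, so the hypothesis forces $\Usum(G) \le \Phi^{\a',\b',\delta',\eta}(G) < 2 + (N^2-2) e^{\eta( K/(N^2-2) - K^2/N(N^2-2) + K(N-K)/(N(N-1)(N^2-2))}$. As this is precisely the threshold appearing in Proposition~\ref{propnozeros}, that proposition shows that $G$ contains no zero entries.

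Next I would establish that $G$ is equal-norm. Fixing any $\a_0 \in I$ and $\b_0 \in J$, I would regard $\Umain = \Psi + \Udiag$ with $\Psi := \Phi_{ch}^{\a_0,\b_0} + \Usum$, which is nonnegative, real analytic, and independent of $\delta$. The family-wise critical point hypothesis gives $\grad \Umain(G) = 0$ for all $\delta \in T$, that is $\grad(\Psi + \Udiag)(G) = 0$ on the open interval $T$; since $G$ has no zero entries, Proposition~\ref{propequalnorm} shows that $G$ is equal-norm. Finally, fixing any $\delta_0 \in T$, I would regard $\Umain = \Psi' + \Uch$ with $\Psi' := \Phi_{diag}^{\delta_0} + \Usum$, again nonnegative, real analytic, and independent of $\a$ and $\b$. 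The hypothesis gives $\grad \Umain(G) = 0$ for all $\a \in I$ and $\b \in J$, so Corollary~\ref{corequidistribution} --- whose remaining hypotheses, equal-norm and no zero entries, were verified in the previous two steps --- shows that $G$ is equidistributed, completing the proof.

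The argument is essentially bookkeeping once the three earlier results are in hand, so the only real subtlety is the nonnegativity of each summand, and in particular of $\Udiag$, since Proposition~\ref{propequalnorm} and Corollary~\ref{corequidistribution} require the auxiliary function $\Psi$ to be nonnegative. That nonnegativity is where the short Jensen-plus-Cauchy-Schwarz computation is needed; everything else follows from the additivity of $\Umain$ and the freedom to hold any two of the three parameter blocks fixed while the third ranges over an open interval.
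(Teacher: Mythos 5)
Your proof is correct and follows essentially the same route as the paper's: bound $\Usum(G)$ by the hypothesized potential value to rule out zero entries via Proposition~\ref{propnozeros}, then apply Proposition~\ref{propequalnorm} (writing $\Umain = \Psi + \Udiag$) to get the equal-norm property, and finally Corollary~\ref{corequidistribution} (writing $\Umain = \Psi' + \Uch$) to conclude equidistribution. The only difference is that you explicitly verify the nonnegativity of $\Udiag$ via Jensen and Cauchy--Schwarz, a point the paper leaves implicit when it passes from the bound on $\Phi^{\a',\b',\delta',\eta}(G)$ to the same bound on $\Usum(G)$.
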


\begin{proof}
Since $G$ is a family-wise critical point, $\nabla \Umain (G) = 0$ for all $\a \in I, \b \in J$ and $\delta \in T$.

Since $\Phi^{\a', \b', \delta', \eta}(G) = \Phi_{ch}^{\a', \b'}(G) + \Phi_{diag}^{\delta'} (G) + \Phi_{sum}^{\eta}(G) < D$, 
with 
$$ D=2 + (N^2-2) e^{\eta( K/(N^2-2) - K^2/N(N^2-2) + K(N-K)/(N(N-1)(N^2-2))}$$
then
it must also be the case that $\Usum(G) < D$.  Hence, $G$ contains no zero entries, by Proposition~\ref{propnozeros}.  

Now, with respect to Proposition~\ref{propequalnorm}, we can momentarily rewrite $\Umain$ as $\Phi^\delta = \Psi + \Udiag$, where $\Psi =\Uch+ \Usum$.  Since we have confirmed that there are no zero entries, since $\Psi$ does not depend on $\delta$ and since $\grad \Phi^\delta (G) = 0$ for all $\delta \in T$, it follows from Proposition~\ref{propequalnorm} that $G$ corresponds to an equal-norm Parseval frame. 

Finally, with respect to Corollary~\ref{corequidistribution}, we can once again rewrite $\Umain$ as $\Phi^{\a,\b} = \Psi + \Uch$, where $\Psi = \Usum+ \Udiag $ this time.  Since we have confirmed that $G$ contains no zeros, since $G$ is equal-norm, $\Psi$ does not depend on $\a$ or $\b$, and  $\grad \Phi^{\a, \b}_{ch} (G) = 0$ for all $\a \in I$ and  $\b \in J$, it follows from Corollary \ref{corequidistribution} that $G$ is equidistributed.

 \end{proof}
 }
 
 Alternatively, we can relax the 
 requirement 
on the value of the potential 
 and simply assume that our family-wise critical point contain no zero entries.  It is clear from the preceding proof that this would also yield equidistributivity, as stated below.

\jh{
\begin{cor}\label{cormain} 
Let $\eta>0$.  If $G \in \M$ is a family-wise critical point 
with respect to the family of frame potentials
$\{\Phi^{\alpha,\beta,\delta,\eta}\}_{\alpha \in I, \beta \in J, \delta \in T}$
and $G$ contains no zero entries, then $G$ is equidistributed.
\end{cor}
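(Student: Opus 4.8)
The plan is to mimic the proof of Theorem~\ref{thmmain} while bypassing its very first step, which invoked the potential-value bound solely to rule out zero entries via Proposition~\ref{propnozeros}; here that conclusion is supplied directly by hypothesis. Concretely, I would first unpack Definition~\ref{deffamwise}: being a family-wise critical point means $\grad \Umain(G)=0$ simultaneously for every $\a\in I$, $\b\in J$, and $\delta\in T$. Since $G$ is assumed to contain no zero entries, no analogue of Proposition~\ref{propnozeros} is required, and the remaining two steps go through verbatim.

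Second, I would isolate the diagonal potential by writing $\Umain=\Psi+\Udiag$ with $\Psi=\Uch+\Usum$. This $\Psi$ is independent of $\delta$, so together with the absence of zero entries and the fact that $\grad(\Psi+\Udiag)(G)=0$ for all $\delta\in T$, Proposition~\ref{propequalnorm} applies and forces $G$ to be the Gram matrix of an equal-norm Parseval frame.

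Third, I would instead isolate the chain potential by writing $\Umain=\Psi+\Uch$ with $\Psi=\Usum+\Udiag$, a function now independent of $\a$ and $\b$. Having established that $G$ has no zero entries and is equal-norm, and using $\grad(\Psi+\Uch)(G)=0$ for all $\a\in I$ and $\b\in J$, Corollary~\ref{corequidistribution} yields that $G$ is equidistributed, which completes the argument.

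There is essentially no hard step, since the substance was already carried out in Propositions~\ref{propequalnorm} and \ref{propalphaequipart}. The one point genuinely worth checking is that each decomposition meets the hypothesis of the result it feeds into, namely that the auxiliary function $\Psi$ maps into $[0,\infty)$. This holds because $\Uch$ is a sum of squared link potentials, $\Usum$ is a sum of exponentials, and $\Udiag$ is non-negative by Jensen's inequality applied to the diagonal entries (whose average is $K/N$); hence both choices $\Uch+\Usum$ and $\Usum+\Udiag$ are non-negative and real analytic, as required.
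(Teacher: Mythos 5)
Your proposal is correct and follows exactly the route the paper intends: the paper proves this corollary simply by observing that in the proof of Theorem~\ref{thmmain} the potential-value bound was used only to invoke Proposition~\ref{propnozeros}, so assuming no zero entries directly lets the two decomposition steps (Proposition~\ref{propequalnorm}, then Corollary~\ref{corequidistribution}) go through verbatim. Your added verification that each auxiliary $\Psi$ is non-negative is a sensible check of the hypotheses that the paper leaves implicit, and it is correct.
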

}

\jh{
\begin{prop}\label{propUsumenough}
Let $\eta>0$ and $G \in \M$ be equidistributed.  If $\nabla \Usum (G) = 0$, then $G$ is a family-wise critical point
with respect to $\{\Phi^{\alpha,\beta,\delta,\eta}\}_{\alpha, \beta, \delta \in (0,\infty)}$.
\end{prop}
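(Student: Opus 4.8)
The plan is to exploit the linearity of the gradient together with the decomposition $\Umain = \Uch + \Udiag + \Usum$ from Definition~\ref{defUmain}, which gives $\nabla \Umain(G) = \nabla \Uch(G) + \nabla \Udiag(G) + \nabla \Usum(G)$ for every choice of $\a,\b,\delta$. Since $\nabla \Usum(G) = 0$ is assumed, it suffices to prove that the other two gradients vanish at $G$ for all parameters in $(0,\infty)$. By Corollary~\ref{cor0gradchar} I may work instead with the lifted gradients $\nabla \wUdiag$ and $\nabla \wUch^{\a,\b}$ at the identity, since these vanish exactly when $\nabla \Udiag(G)$ and $\nabla \Uch(G)$ do. The key structural inputs are that an equidistributed $G$ is equal-norm (the proposition following Definition~\ref{defequidist}) and is $\a$-equipartitioned for every $\a$ (Proposition~\ref{propequipartfonopen}).

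First I would dispose of the diagonal term. By Proposition~\ref{propgradUdiag}, $[\nabla \wUdiag]_{a,b} = 2 G_{a,b}(G_{a,a} e^{\delta|G_{a,a}|^2} - G_{b,b} e^{\delta|G_{b,b}|^2})$. Because $G$ is equidistributed it is equal-norm, so $G_{a,a} = G_{b,b} = K/N$ for all $a,b$, whence the parenthetical factor vanishes identically and $\nabla \wUdiag = 0$ for every $\delta \in (0,\infty)$. This settles $\nabla \Udiag(G) = 0$ with no dependence on $\delta$.

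The chain potential is the crux, and the cleanest route avoids Proposition~\ref{propgradUch} entirely, instead using the chain-rule identity established inside the proof of Lemma~\ref{lemgradLx}: for each $x$,
$$\nabla \widehat L_x^{\a,\b} = 2\bigl(R_x^{\a,\b}(G) - R_{x+1}^{\a,\b}(G)\bigr)\bigl(\nabla \widehat R_x^{\a,\b} - \nabla \widehat R_{x+1}^{\a,\b}\bigr),$$
so that summing over $x \in \Zn$ expresses $\nabla \wUch^{\a,\b}$ as a linear combination of matrices weighted by the scalar factors $R_x^{\a,\b}(G) - R_{x+1}^{\a,\b}(G)$. The main step is therefore to show that $x \mapsto R_x^{\a,\b}(G)$ is constant. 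Recall $R_x^{\a,\b}(G) = \frac{1}{\a}\sum_{j \ne x} e^{\a|G_{x,j}|^2} + \b e^{|G_{x,x}|^2}$. Since $G$ is equidistributed it is $\a$-equipartitioned for every $\a$, which by Definition~\ref{defequipart} means the full row sums $\sum_{j \in \Zn} e^{\a|G_{x,j}|^2}$ are independent of $x$; subtracting the diagonal term $e^{\a(K/N)^2}$, which is the same constant for each $x$ by equal-normality, shows the off-diagonal sums $\sum_{j\ne x} e^{\a|G_{x,j}|^2}$ are independent of $x$ as well. Together with $\b e^{(K/N)^2}$ being likewise $x$-independent, this yields $R_x^{\a,\b}(G) = R_{x+1}^{\a,\b}(G)$ for all $x$, so every scalar weight vanishes and hence $\nabla \wUch^{\a,\b} = 0$ for all $\a,\b \in (0,\infty)$.

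Finally I would combine the three facts: $\nabla \Usum(G) = 0$ by hypothesis, $\nabla \Udiag(G) = 0$ for all $\delta$, and $\nabla \Uch(G) = 0$ for all $\a,\b$. By linearity $\nabla \Umain(G) = 0$ for all $\a, \b, \delta \in (0,\infty)$, which is exactly the assertion that $G$ is a family-wise critical point with respect to $\{\Phi^{\a,\b,\delta,\eta}\}_{\a,\b,\delta \in (0,\infty)}$ in the sense of Definition~\ref{deffamwise}. The only genuinely non-routine point is recognizing that equidistribution forces the row sums $R_x^{\a,\b}(G)$ to be constant in $x$; once that is in hand, the chain-rule factorization collapses the chain-potential gradient, and the diagonal and sum potentials require no further work. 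I expect no serious obstruction, since each reduction is an immediate consequence of a result proved earlier in this section.
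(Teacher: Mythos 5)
Your proof is correct, but it takes a genuinely different route from the paper's. The paper argues structurally, with no gradient formulas at all: since $\Uch$ is a sum of squares and $\Udiag$ is nonnegative on $\M$ (the potentials are built to be nonnegative, the latter by Jensen's inequality since the trace is fixed at $K$), and since equidistribution forces $\Uch(G)=0$ (constant row sums) and $\Udiag(G)=0$ (equal norms), the point $G$ is a global minimizer of each of these potentials, so $\nabla\Uch(G)=0$ and $\nabla\Udiag(G)=0$ follow from the first-derivative test; linearity then finishes the argument exactly as in your last step. You instead verify the vanishing directly from the explicit formulas: Proposition~\ref{propgradUdiag} for $\wUdiag$, where equal norms kill the factor $G_{a,a}e^{\delta|G_{a,a}|^2}-G_{b,b}e^{\delta|G_{b,b}|^2}$, and the chain-rule factorization inside Lemma~\ref{lemgradLx} for $\wUch$, where constancy of $x\mapsto R_x^{\a,\b}(G)$ (which you correctly deduce from equipartitioning for all $\a$ plus equal-normality) kills every scalar weight $R_x^{\a,\b}(G)-R_{x+1}^{\a,\b}(G)$. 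Both proofs rest on the same structural facts about equidistributed Gramians, but they differ in the passage from structure to criticality: the paper's minimality argument is shorter and needs no computation, while yours is more self-contained in that it never invokes the nonnegativity of $\Udiag$ on $\M$, relying only on identities already established earlier in the section. Either argument is acceptable.
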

}

\begin{proof}
Since $G$ is equidistributed, we see immediately that $\Uch(G) = 0$ for all $\a, \b \in (0, \infty)$, so it follows that $\nabla \Uch(G) = 0$ for all $\a, \b \in (0, \infty)$.  Similarly, since $G$ must also be equal-norm, we have $\Udiag(G) = 0$ for all $\delta \in (0, \infty)$, which implies$\nabla \Udiag(G) = 0$ for all $\delta \in (0, \infty)$.  Thus, if  $\nabla \Usum (G) = 0$, then 
$$\nabla \Umain (G) = \nabla \Uch(G) + \Udiag(G) + \nabla \Usum (G) = 0 $$
for all $\a, \b, \delta \in (0, \infty)$, so the claim follows.
\end{proof}

The assumption in the preceding proposition is met when the frame is generated with a group representation as specified below.


\begin{prop}\label{lemcircFP}
Suppose $\Gamma$ is a finite group of size $N=|\Gamma|$ with a unitary representation $\pi: \Gamma \to B(\mathcal H)$ 
on the complex $K$-dimensional Hilbert space $\mathcal H$ and $\{f_g = \pi(g) f_e\}$
is an $(N,K)$-frame. If the Gram matrix $G$ satisfies $G_{g,h} = G_{h^{-1},g^{-1}}$ for all $g,h \in \Gamma$, then $\nabla \Usum (G) = 0$ for all $\eta \in (0, \infty)$.
\end{prop}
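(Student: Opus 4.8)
The plan is to invoke Corollary~\ref{cor0gradchar}, which reduces the claim $\nabla \Usum(G)=0$ to showing that the lifted gradient $\nabla \wUsum$ vanishes at the identity, entry by entry. I would start from the explicit formula in Proposition~\ref{propgradUsum}. Since $\{f_g\}$ is a group frame it is equal-norm, with $G_{g,g}=\|f_e\|^2=K/N$ for every $g\in\Gamma$; inserting $G_{g,g}=G_{h,h}=K/N$ collapses the second and third groups of terms in that formula to zero, leaving only
\[
 [\nabla \wUsum]_{g,h} = 2\sum_{\substack{j\in\Gamma\\ j\neq g,h}} \bigl(e^{\eta|G_{g,j}|^2}-e^{\eta|G_{h,j}|^2}\bigr)\,G_{g,j}G_{j,h}.
\]

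Next I would exploit the symmetry of a group frame. Because $\pi$ is unitary, $G_{kg,kh}=\langle \pi(k)f_h,\pi(k)f_g\rangle=G_{g,h}$, so $G$ is invariant under the simultaneous relabelling $g\mapsto kg$. Re-indexing the sum above by $j\mapsto kj$ then yields $[\nabla\wUsum]_{kg,kh}=[\nabla\wUsum]_{g,h}$, and choosing $k=g^{-1}$ reduces the problem to showing that every entry in the row of the identity vanishes, i.e.\ $[\nabla\wUsum]_{e,h}=0$ for all $h\in\Gamma$. To bring in the hypothesis I would write $G_{g,h}=\phi(g^{-1}h)$ with $\phi(k)=\langle\pi(k)f_e,f_e\rangle$, noting $\phi(k^{-1})=\overline{\phi(k)}$ and hence $|\phi(k^{-1})|=|\phi(k)|$. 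The assumed relation $G_{g,h}=G_{h^{-1},g^{-1}}$ translates into $\phi(g^{-1}h)=\phi(hg^{-1})$, which (setting $k=g^{-1}h$) says exactly that $\phi$ is constant on conjugacy classes; in particular $\phi(xy)=\phi(yx)$ for all $x,y\in\Gamma$.

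The core of the argument is then the identity $[\nabla\wUsum]_{e,h}=2(A-B)$, where
\[
 A=\sum_{\substack{j\neq e,h}} e^{\eta|\phi(j)|^2}\phi(j)\phi(j^{-1}h),\qquad
 B=\sum_{\substack{j\neq e,h}} e^{\eta|\phi(h^{-1}j)|^2}\phi(j)\phi(j^{-1}h).
\]
I would first check that the two terms excluded from each sum, namely $j=e$ and $j=h$, contribute the same total to $A$ as to $B$ (this uses $\phi(e)=K/N$ and $|\phi(h^{-1})|=|\phi(h)|$), so that $A-B$ equals the difference of the corresponding \emph{full} group sums $S_1-S_2$. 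In $S_1$ I would apply the substitution $j\mapsto j^{-1}$, using $|\phi(j^{-1})|=|\phi(j)|$, and in $S_2$ the substitution $j\mapsto hj$; both sums then take the form $\sum_{j} e^{\eta|\phi(j)|^2}\phi(j^{-1})\,\phi(\,\cdot\,)$ and differ only in a factor $\phi(jh)$ versus $\phi(hj)$. Since $\phi(jh)=\phi(hj)$ by the class-function property, $S_1=S_2$, hence $A=B$ and the entry vanishes. Combined with the reduction to the identity row, this gives $\nabla\wUsum=0$ and therefore $\nabla\Usum(G)=0$ for every $\eta>0$.

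I expect the main obstacle to be the careful bookkeeping of which terms are excluded from the off-diagonal sums and the verification that these excluded terms cancel between $A$ and $B$; the conceptual key, which makes everything fall into place, is the observation that the hypothesis $G_{g,h}=G_{h^{-1},g^{-1}}$ is precisely the statement that the defining function $\phi$ is a class function, so that $\phi(jh)=\phi(hj)$ after the index substitutions.
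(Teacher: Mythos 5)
Your proposal is correct and follows essentially the same route as the paper's proof: after cancelling the diagonal terms via the equal-norm property, both arguments reduce the vanishing of each gradient entry to an identity between exponentially weighted sums over the group, proved by re-indexing ($j\mapsto j^{-1}$, $j\mapsto hj$), the symmetry $|\phi(k^{-1})|=|\phi(k)|$, and the hypothesis read as the class-function property $\phi(xy)=\phi(yx)$. Your reduction to the identity row via left-invariance and your explicit check that the excluded terms $j=e,h$ cancel are just cleaner bookkeeping of steps the paper performs (or glosses over) in the general $(a,b)$ entry.
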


\begin{proof}
Fix $\eta \in (0, \infty)$.  Since $G$ is equidistributed (see Example \ref{exharmonic}), it is equal norm, so the last two additive two terms from the $(a,b)$ gradient entry in Proposition~\ref{propgradUsum} cancel. Therefore, to show that the gradient vanishes, it is sufficient to show that
$$
       \sum_{j \in \Gamma} e^{\eta |G_{a,j}|^2} G_{a,j} G_{j,b} =  \sum_{j \in \Gamma}  e^{\eta |G_{b,j}|^2 }  G_{a,j} G_{j,b}
$$
for all $a, b \in \Gamma$.
As a first step, we note that the group representation gives $G_{x,y} = \langle f_y, f_x\rangle = \langle \pi(x^{-1} y) f_e, f_e \rangle
\equiv H(x^{-1} y) $. Thus, we can change the summation index and get
$$
    \sum_{j \in \Gamma} e^{\eta |G_{a,j}|^2} G_{a,j} G_{j,b} =  \sum_{j \in \Gamma} e^{\eta |H(j^{-1} a)|^2} H(j^{-1}a) H(b^{-1} j) =
    \sum_{j \in \Gamma} e^{\eta |H(j^{-1})|^2} H(j^{-1}) H(b^{-1} a j) \, .
$$
We also note that $|H(g)| = |H(g^{-1})|$, so in combination with changing the summation index we obtain
$$
\sum_{j \in \Gamma} e^{\eta |G_{a,j}|^2} G_{a,j} G_{j,b} = \sum_{j \in \Gamma} e^{\eta |H(j)|^2} H(j^{-1}) H(b^{-1} a j)
 = \sum_{j \in \Gamma} e^{\eta |H(j^{-1}b)|^2} H(b^{-1}j) H(b^{-1} a j^{-1} b) \, .
$$
Finally, using the fact that the 
Gram matrix has the assumed structure gives $H(h^{-1}g)=H(gh^{-1})$ for $h=a^{-1} b$ and $g=j^{-1} b$, which yields
$$
 \sum_{j \in \Gamma} e^{\eta |G_{a,j}|^2} G_{a,j} G_{j,b} = \sum_{j \in \Gamma} e^{\eta |H(j^{-1}b)|^2} H(b^{-1}j) H( j^{-1} a) =
   \sum_{j \in \Gamma} e^{\eta |G_{b,j}|^2} G_{a,j} G_{j,b} \, .
$$
This completes the proof, since $\eta$ was arbitrary.

\end{proof}

The claimed property of the Gramian is true if $\Gamma$ is abelian.

\begin{cor}Suppose $\Gamma$ is a finite abelian group of size $N=|\Gamma|$ with a unitary representation $\pi: \Gamma \to B(\mathcal H)$ 
on a real or complex $K$-dimensional Hilbert space $\mathcal H$ and $\{f_g = \pi(g) f_e\}$
is an $(N,K)$-frame, then $\nabla \Usum (G) = 0$ for every $\eta \in (0, \infty)$.
\end{cor}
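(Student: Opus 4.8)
The plan is to reduce the claim to Proposition~\ref{lemcircFP}, which already shows that $\nabla \Usum(G) = 0$ for every $\eta$ whenever the Gram matrix obeys the symmetry $G_{g,h} = G_{h^{-1},g^{-1}}$ for all $g,h \in \Gamma$. Thus the only thing I need to establish is that commutativity of $\Gamma$ forces this identity; everything else is already done.

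First I would recall, exactly as in the proof of Proposition~\ref{lemcircFP}, that the representation structure gives $G_{g,h} = \langle f_h, f_g \rangle = \langle \pi(g^{-1}h) f_e, f_e \rangle \equiv H(g^{-1}h)$, where $H(x) := \langle \pi(x) f_e, f_e\rangle$. Evaluating the right-hand side of the desired identity, $G_{h^{-1},g^{-1}} = H((h^{-1})^{-1} g^{-1}) = H(h g^{-1})$. Since $\Gamma$ is abelian, $g^{-1}h = h g^{-1}$, and therefore $G_{g,h} = H(g^{-1}h) = H(h g^{-1}) = G_{h^{-1},g^{-1}}$ for all $g,h \in \Gamma$. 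This is precisely the hypothesis of Proposition~\ref{lemcircFP}, so in the complex case the conclusion $\nabla \Usum(G) = 0$ for every $\eta \in (0,\infty)$ follows immediately.

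For the real case, which is not literally covered by the statement of Proposition~\ref{lemcircFP} (phrased there for a complex Hilbert space), I would observe that its proof uses complexity only through the identity $|H(x)| = |H(x^{-1})|$. This holds in both settings, because $H(x^{-1}) = \langle \pi(x^{-1}) f_e, f_e\rangle = \langle f_e, \pi(x) f_e\rangle = \overline{H(x)}$, so the entire reindexing argument there carries over verbatim and the gradient vanishes for all $\eta$ in the real case as well.

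The only subtlety I anticipate is the real-versus-complex bookkeeping just mentioned: one must confirm that every step in the proof of Proposition~\ref{lemcircFP} depends on the field only through $|H(x)| = |H(x^{-1})|$ and the group reindexing, and not on any genuinely complex feature. Since that proof manipulates only magnitudes of entries and changes of summation index over $\Gamma$, I expect this verification to be routine, and the entire substance of the corollary to rest on the one-line commutativity computation $g^{-1}h = h g^{-1}$.
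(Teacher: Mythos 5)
Your proof is correct and takes essentially the same approach as the paper: the paper leaves the corollary's proof implicit, remarking only that ``the claimed property of the Gramian is true if $\Gamma$ is abelian,'' which is exactly the reduction you carry out — writing $G_{g,h}=H(g^{-1}h)$ and $G_{h^{-1},g^{-1}}=H(hg^{-1})$ and invoking commutativity to verify the hypothesis of Proposition~\ref{lemcircFP}. Your extra verification that the proposition's argument (stated there for a complex Hilbert space) survives in the real case, since it only uses $|H(x)|=|H(x^{-1})|$ and reindexing over $\Gamma$, is a point the paper glosses over but does not alter the substance of the argument.
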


There is an abundance of \jh{equidistributed Parseval} frames obtained with representations of abelian groups, in particular the harmonic
frames that exist for any combination of the number of frame vectors $N$ and dimension $K\le N$.

\begin{cor}\label{famwiseexist}
For every pair of integers $1\leq K \le N$ \jh{and for every $\eta>0$}, there exists a family-wise critical point 
with respect to $\{\Phi^{\alpha,\beta,\delta,\eta}\}_{\alpha, \beta, \delta \in (0,\infty)}$
on $\M$.
\end{cor}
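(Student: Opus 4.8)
The plan is to reduce the existence statement to the two structural results already established for equidistributed frames. By Proposition~\ref{propUsumenough}, any $G \in \M$ that is equidistributed and satisfies $\nabla \Usum(G) = 0$ is automatically a family-wise critical point with respect to $\{\Phi^{\alpha,\beta,\delta,\eta}\}_{\alpha,\beta,\delta \in (0,\infty)}$. Hence it suffices to exhibit, for each pair $1 \le K \le N$ and each $\eta > 0$, a single Gram matrix enjoying both properties; the quantifier over $\eta$ causes no difficulty, because the candidate frames will satisfy $\nabla \Usum(G) = 0$ for all $\eta$ simultaneously.

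The candidates I would use are the harmonic frames of Example~\ref{exharmonic}, generated by the cyclic group $\mathbb{Z}_N$. Concretely, fix $\omega = e^{2\pi i /N}$, choose $K$ distinct frequencies $k_1, \dots, k_K \in \mathbb{Z}_N$ (possible since $K \le N$), take the diagonal unitary representation $\pi(j) = \mathrm{diag}(\omega^{j k_1}, \dots, \omega^{j k_K})$, and set $f_j = \pi(j) f_0$ with $f_0 = \tfrac{1}{\sqrt N}(1,\dots,1) \in \mathbb{C}^K$, so that $\|f_0\|^2 = K/N$. Character orthogonality of $\mathbb{Z}_N$ makes the rows of the synthesis matrix orthonormal, so $\mathcal F = \{f_j\}_{j \in \Zn}$ is an $(N,K)$-frame, and since it is a group frame over the abelian group $\mathbb{Z}_N$, Example~\ref{exharmonic} guarantees that its Gram matrix $G$ is equidistributed.

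It then remains to verify $\nabla \Usum(G) = 0$ for every $\eta$. Because $G$ arises from a representation of an abelian group, this is exactly the content of the corollary following Proposition~\ref{lemcircFP}; alternatively, one observes directly that $G_{g,h}$ depends only on $h-g \bmod N$, whence $G_{g,h} = G_{-h,-g} = G_{h^{-1},g^{-1}}$, and applies Proposition~\ref{lemcircFP}. Combining equidistribution with $\nabla \Usum(G) = 0$ and invoking Proposition~\ref{propUsumenough} then yields the claim for the chosen $\eta$, and since $\eta$ was arbitrary, for all $\eta>0$.

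The one point genuinely needing care is the field $\mathbb{F} = \mathbb{R}$, where the complex exponentials above leave the ambient space. There I would instead use the real harmonic frames arising from the real representations of $\mathbb{Z}_N$ (the $K=2$ instance is Example~\ref{Ex:GrassmannianSemicirc}); these remain equal-norm Parseval group frames over an abelian group, and the corollary after Proposition~\ref{lemcircFP} is stated for both fields, so $\nabla \Usum(G) = 0$ persists and the same conclusion follows. Everything else is a direct assembly of the preceding propositions, so I expect this real-field adaptation to be the only step requiring attention.
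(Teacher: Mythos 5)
Your proposal is correct and takes essentially the same route as the paper: the paper obtains this corollary by combining Proposition~\ref{propUsumenough} with the corollary to Proposition~\ref{lemcircFP}, applied to the harmonic frames of Example~\ref{exharmonic}, which are equidistributed group frames over the abelian group $(\mathbb{Z}_N,+)$ and exist for every $1\le K\le N$. Your explicit construction of the harmonic frames, the verification of the Gramian symmetry $G_{g,h}=G_{h^{-1},g^{-1}}$, and the treatment of the real case via real harmonic frames simply fill in details the paper leaves implicit.
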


\jh{
The gradient of the sum energy also vanishes for any Gramian corresponding to a mutually unbiased basic sequence which has been rescaled to admit Parsevality.

\begin{prop}\label{propMUBsareFPs}
If $G \in \M$ is the Gramian of a mutually unbiased basic sequence, then $\nabla \Usum(G) =0$ for all $\eta \in (0, \infty)$.  
\end{prop}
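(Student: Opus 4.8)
The plan is to reduce the claim, via Corollary~\ref{cor0gradchar}, to showing that every entry $[\nabla\wUsum]_{a,b}$ of the lifted gradient computed in Proposition~\ref{propgradUsum} vanishes at $G$, for an arbitrary fixed $\eta>0$. The diagonal case $a=b$ is trivial, since then each of the three groups of terms in that formula vanishes identically (the summand because $e^{\eta|G_{a,j}|^2}=e^{\eta|G_{b,j}|^2}$, and the remaining two because each carries a factor that is zero when the relevant diagonal entries coincide). So it suffices to treat $a\neq b$.

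First I would use that $G$, being the Gramian of a mutually unbiased basic sequence, is equidistributed by Example~\ref{MUBS}, and hence equal-norm, so that $G_{a,a}=G_{b,b}=K/N$ for all indices. Feeding this into the last line of the gradient formula, its two summands become $2e^{\eta|G_{a,b}|^2}G_{a,b}(G_{b,b}-G_{a,a})$ and $2e^{\eta(C_{N,K}^2-K^2/N^2)}G_{a,b}(e^{\eta G_{a,a}^2}G_{a,a}-e^{\eta G_{b,b}^2}G_{b,b})$, both of which vanish under the equal-norm condition. What remains to be shown is that the single remaining sum $2\sum_{j\notin\{a,b\}}(e^{\eta|G_{a,j}|^2}-e^{\eta|G_{b,j}|^2})G_{a,j}G_{j,b}$ is zero.

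The heart of the argument is then to show this sum vanishes termwise, using the block description of $Q_{j,l}=|G_{j,l}|$ from Example~\ref{MUBS}, namely $Q=bI_M\otimes I_L+c(J_M-I_M)\otimes J_L$ with $b=K/N$. A summand is nonzero only if both $G_{a,j}\neq0$ and $G_{j,b}\neq0$. For an index $j\neq a$, the magnitude $|G_{a,j}|$ equals $0$ when $j$ lies in the same orthonormal block as $a$ and equals $c$ otherwise; thus $G_{a,j}\neq0$ forces $|G_{a,j}|=c$, and symmetrically $G_{j,b}\neq0$ forces $|G_{b,j}|=c$. Consequently, whenever the product $G_{a,j}G_{j,b}$ fails to vanish we have $|G_{a,j}|=c=|G_{b,j}|$, so the coefficient $e^{\eta|G_{a,j}|^2}-e^{\eta|G_{b,j}|^2}$ is zero. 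Hence every summand vanishes irrespective of the phases of the entries, so does the sum, and therefore so does $[\nabla\wUsum]_{a,b}$.

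There is no deep obstacle here; the only point demanding care is the bookkeeping that reads off the magnitude $|G_{a,j}|$ from the Kronecker form of $Q$ for each summation index $j$, together with verifying that the two diagonal correction terms genuinely cancel once equal norm is imposed. Since $\eta$ enters only through exponentials that are shown to coincide, the argument holds simultaneously for every $\eta\in(0,\infty)$. In spirit this repeats the equal-norm reduction of Proposition~\ref{lemcircFP}, with the group-theoretic reindexing there replaced by the direct block argument above.
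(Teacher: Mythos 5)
Your proof is correct and follows essentially the same route as the paper's: reduce via Corollary~\ref{cor0gradchar} to the entrywise formula of Proposition~\ref{propgradUsum}, cancel the two diagonal correction terms using the equal-norm property of the MUB Gramian, and make the remaining off-diagonal sum vanish using the block structure (entries are either zero within a block or of common magnitude across blocks). The only difference is organizational: where the paper runs a case analysis over block positions of $(a,b)$ and splits the $j$-sum into block-indexed sub-sums, you observe termwise that a nonzero product $G_{a,j}G_{j,b}$ forces $|G_{a,j}|=|G_{b,j}|$ to equal the common cross-block magnitude, so the exponential prefactor vanishes --- a cleaner unification of the same cancellation mechanism (just beware the clash between your use of $b$ as a matrix index and as the diagonal parameter from Example~\ref{MUBS}).
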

\begin{proof}
Fix $\eta \in (0, \infty)$. We recall that  the 
Gram matrix has diagonal entries that are equal to $K/N$,
and the other entries either vanish in diagonal blocks
or have the same magnitude in off-diagonal blocks.
Assuming the frame vectors
are grouped in $M$ subsets of size $L$, then
$N=ML$ and 
there are $M(M-1)L^2$ off-diagonal entries of the same magnitude 
$C_{M,L,K}$. 
Based on the Hilbert-Schmidt norm of $G$, we then have
$$
    C_{M,L,K} = \sqrt{\frac{K(ML-K)}{M^2(M-1)L^3}} \, .
$$

In order to make the block structure apparent in the notation, 
we write the matrix $G$ as $G = (G_{x,y}^{(p,q)})_{p,q \in \mathbb Z_M, x,y \in \mathbb Z_L}$, where the doubly-indexed superscript indicates in which block the entry is and the subscript indicates the position within the block. The absolute value of any entry then satisfies
$$|G_{x,y}^{(p,q)}| = 
\left\{
\begin{array}{cc}
\frac{K}{ML}, & x=y, p=q \\
0, & x \neq y, p = q \\
C_{M,L,K}, & p \neq q\, .
\end{array}
\right. 
$$
To see the claim, we verify that every entry of $\nabla \wUsum$ vanishes.  Since this is automatically true for the diagonal entries, let $(p,x), (q,y) \in \mathbb Z_S \times \mathbb Z_L$ with $(p,x) \neq (q,y)$.  One has that either $p=q$ or $p \neq q$. If $p=q$, then re-expressing the identity in Proposition~\ref{propgradUsum} in terms of block notation and noting that the last two terms on the right-hand side cancel due to the equal-norm property 
yields
\begin{align*}
\left[ \nabla \wUsum  \right]_{x,y}^{(p,p)}   
& = 2
\tiny{ \sum\limits_{
\begin{array}{cc}
t=1 \\
t \neq x,y
\end{array}
}^{L} 
}
( e^{\eta | G_{x,t}^{(p,p)}|^2}- e^{\eta | G_{y,t}^{(p,p)}|^2 } ) G_{x,t}^{(p,p)} G_{t,y}^{(p,p)} \\
&\phantom{=} +
2 \, 
\tiny{ \sum_{\begin{array}{cc}
s=1 \\
s \neq p 
\end{array}}^M  \sum\limits_{
\begin{array}{cc}
t=1 \\
t \neq x,y
\end{array}
}^{L} 
}
( e^{\eta | G_{x,t}^{(p,s)}|^2}- e^{\eta | G_{y,t}^{(p,s)}|^2 } ) G_{x,t}^{(p,s)} G_{t,y}^{(s,p)} \, .
\end{align*}
The first  series on the right-hand side
is zero because $G_{x,t}^{(p,p)}=G_{y,t}^{(p,p)} = 0$
since $t \not \in \{x,y\}$. The second series
also vanishes because when $s \ne p$, then
$|G_{x,t}^{(p,s)}|=|G_{y,t}^{(p,s)}| = C_{M,L,K}$. 
Thus, the block diagonal entries of the gradient
vanish.

On the other hand, if $p \neq q$, then we get 
\begin{align*}
\left[ \nabla \wUsum  \right]_{x,y}^{(p,q)}   
& = 2
\tiny{ \sum\limits_{
\begin{array}{cc}
s,t=1 \\
(s,t) \neq (p,x), (q,y)
\end{array}
}^{M,L} 
}
( e^{\eta | G_{x,t}^{(p,s)}|^2}- e^{\eta | G_{y,t}^{(q,s)}|^2 } ) G_{x,t}^{(p,s)} G_{t,y}^{(s,q)}\\
&=
 2
\tiny{ \sum\limits_{
\begin{array}{cc}
t=1 \\
t \neq x
\end{array}
}^{L} 
}
( e^{\eta | G_{x,t}^{(p,p)}|^2}- e^{\eta | G_{y,t}^{(q,p)}|^2 } ) G_{x,t}^{(p,p)} G_{t,y}^{(p,q)} \\
& \phantom{=}+ 2
\tiny{ \sum\limits_{
\begin{array}{cc}
t=1 \\
t \neq y
\end{array}
}^{L} 
}
( e^{\eta | G_{x,t}^{(p,q)}|^2}- e^{\eta | G_{y,t}^{(q,q)}|^2 } ) G_{x,t}^{(p,q)} G_{t,y}^{(q,q)}\\
&\phantom{=} +
 2 
 \tiny{ \sum\limits_{
\begin{array}{cc}
s =1 \\
s \neq p, q
\end{array}
}^{M} 
}
\tiny{ \sum\limits_{
\begin{array}{cc}
t=1
\end{array}
}^{L} 
}
( e^{\eta | G_{x,t}^{(p,s)}|^2}- e^{\eta | G_{y,t}^{(q,s)}|^2 } ) G_{x,t}^{(p,s)} G_{t,y}^{(s,q)} \, .
\end{align*}
The first series vanishes because
$G_{x,t}^{(p,p)} = 0$, the second one because $G^{(q,q)}_{t,y}=0$
and the last one because
$
  |G^{(p,s)}_{x,t}| = |G^{(q,s)}_{y,t}| = C_{M,L,K} \, .
$
This confirms that $\nabla \wUsum = 0$ and, since $\eta$ was arbitrary, the claim is proven.
\end{proof}
}

As a consequence of this Proposition and of Proposition~\ref{propUsumenough}, we know that Examples~\ref{Ex:GrassmannianSemicirc} and
\ref{Ex:GrassmannianMUB} provide us with family-wise critical 
points.

If the Gramian does not contain vanishing entries, then
we can characterize equidistributed frames, taking advantage of 
the fact that the term $\Usum$ in $\Umain$ is no longer needed in this case.

\begin{thm}\label{thmcharnozeros}
Let $G \in \M$ and suppose that $G$ contains no zero entries.  The
Gramian $G$ is equidistributed if and only if $\nabla[ \Udiag + \Uch] (G) = 0$ for all $\alpha \in I$, $\beta \in J$, and $\delta \in T$, where $I, J, T \subseteq (0, \infty)$ are open intervals.
\end{thm}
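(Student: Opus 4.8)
The plan is to establish the biconditional by proving the two implications separately, invoking Proposition~\ref{propequalnorm} and Corollary~\ref{corequidistribution} for the less trivial direction and a direct minimization argument for the other.

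For the forward implication, suppose $G$ is equidistributed. Equidistribution forces $G$ to be equal-norm, so $G_{j,j} = K/N$ for every $j$, whence $\Udiag(G) = \frac{N}{\delta}e^{\delta K^2/N^2} - \frac{N}{\delta}e^{\delta K^2/N^2} = 0$ for each $\delta$. A Jensen estimate, combining the convexity of $t \mapsto e^{\delta t}$ with the Cauchy--Schwarz bound $\frac{1}{N}\sum_j G_{j,j}^2 \ge (K/N)^2$ coming from $\sum_j G_{j,j} = K$, shows that $\Udiag \ge 0$ on all of $\M$; hence $G$ is a global minimizer of $\Udiag$ and $\nabla \Udiag(G) = 0$. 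For the chain potential, since the Hermitian matrix $G$ has the magnitude multiset of each row equal to that of any other row and each diagonal entry equal to $K/N$, the off-diagonal magnitudes in row $x$ are a permutation of those in row $y$; consequently $R_x^{\a,\b}(G) = R_y^{\a,\b}(G)$ for all $x,y$, every link term $(R_x^{\a,\b}(G) - R_{x+1}^{\a,\b}(G))^2$ vanishes, and $\Uch(G) = 0$. Being a sum of squares, $\Uch \ge 0$, so $G$ also minimizes $\Uch$ and $\nabla \Uch(G) = 0$. Summing gives $\nabla[\Udiag + \Uch](G) = 0$ for all $\a \in I$, $\b \in J$, $\delta \in T$; the no-zero-entries hypothesis is not needed in this direction.

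For the reverse implication, assume $G$ has no zero entries and $\nabla[\Udiag + \Uch](G) = 0$ throughout $I \times J \times T$. First I would fix $\a_0 \in I$ and $\b_0 \in J$ and apply Proposition~\ref{propequalnorm} with $\Psi = \Phi_{ch}^{\a_0,\b_0}$, a real-analytic, nonnegative function independent of $\delta$; since $\nabla[\Psi + \Udiag](G) = 0$ for all $\delta \in T$ and $G$ has no zero entries, the proposition gives that $G$ is equal-norm. Then I would fix $\delta_0 \in T$ and apply Corollary~\ref{corequidistribution} with $\Psi = \Phi_{diag}^{\delta_0}$, now using that $G$ is equal-norm with no zero entries and $\nabla[\Psi + \Uch](G) = 0$ for all $\a \in I$, $\b \in J$; the corollary then yields that $G$ is equidistributed.

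Conceptually this reverse direction reproduces the argument of Theorem~\ref{thmmain} with the $\Usum$ term deleted, the absence of zero entries now assumed outright instead of forced by a smallness bound on $\Usum$. The only genuine bookkeeping is tracking which parameters are frozen when each earlier result is invoked. The main, and still mild, obstacle is the nonnegativity of $\Udiag$ needed in the forward direction, so that the vanishing $\Udiag(G) = 0$ certifies a global minimum rather than merely a critical value; this is exactly the Jensen/Cauchy--Schwarz estimate indicated above.
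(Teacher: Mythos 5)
Your proposal is correct and follows essentially the same route as the paper's proof: the forward direction via the vanishing of the nonnegative potentials $\Udiag$ and $\Uch$ at a global minimum, and the reverse direction by invoking Proposition~\ref{propequalnorm} (with $\Psi = \Phi_{ch}^{\a_0,\b_0}$) and then Corollary~\ref{corequidistribution} (with $\Psi = \Phi_{diag}^{\delta_0}$). The only difference is that you make explicit the nonnegativity arguments (Jensen/Cauchy--Schwarz for $\Udiag$, sum of squares for $\Uch$) that the paper leaves implicit when concluding that zero potential value forces a vanishing gradient.
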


\begin{proof}
If $G$ is equidistributed, then $\Udiag(G)=0$ and $\Uch(G)=0$ for all $\a, \b, \delta \in (0,\infty)$, so it follows that $\nabla \Udiag(G)=0$ and $\nabla \Uch(G)=0$ for all $\a, \b, \delta \in (0,\infty).$  For the converse, since $G$ contains no zero entries, it follows by 
Proposition~\ref{propequalnorm} that $G$ is equal norm.  With this established, it then follows from Corollary~\ref{corequidistribution} that $G$ is equidistributed.

\end{proof}

\subsection{Constructing equidistributed Grassmannian Parseval frames}\label{sectiongrass}
\jh{ We conclude the discussion of the relation between frame potentials and the structure of optimizers by showing how an equidistributed Grassmannian equal-norm Parseval frame can be obtained as the limit of minimizers to the sequence $\{\Phi^{\eta_n}_{sum}\}_{n=1}^\infty$, where $\eta_n \to \infty$.}

\begin{prop}\label{equalnormetaseqgivesgrass}
Let $\{\eta_m\}_{m=1}^\infty$ be a positive, increasing sequence such that $\lim\limits_{m\rightarrow \infty} \eta_m= + \infty$ and suppose $\left\{G(m)=\left(G(m)_{a,b} \right)_{a,b=1}^N \right\}_{m=1}^\infty \subseteq \M$ is a sequence of Gram matrices  such that $\Phi_{sum}^{\eta_m}$ achieves its absolute minimum at $G(m)$ for every $m \in \{1,2,3,...\}$ and each $G(m)$ corresponds to an equal-norm frame, then there exists a subsequence $\left\{G(m_s)  \right\}_{s=1}^\infty$ and $G \in \M$ such that  $\lim\limits_{s\rightarrow \infty} G(m_s) = G$, where $G$ is the Grassmannian of an equal-norm Parseval frame.\
\end{prop}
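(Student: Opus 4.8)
The plan is to follow the strategy of Proposition~\ref{propodgivesgrass} almost verbatim, the only new ingredient being a two-sided estimate for $\Phi_{sum}^{\eta}$ in place of the one recorded there for $\Phi_{\mathrm{od}}^{\eta}$. The guiding idea is that on the set of \emph{equal-norm} Gramians the diagonal correction built into the sum potential contributes a term whose exponential growth rate in $\eta$ is no larger than that of the off-diagonal part, so that $\frac{1}{\eta}\log\Phi_{sum}^{\eta}$ still recovers the squared coherence as $\eta\to\infty$, and the same Laplace/tropical limit argument applies.

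First I would establish the key estimate. For any equal-norm $G\in\M$ one has $G_{j,j}=K/N$ for all $j$, so the diagonal sum in Definition~\ref{defUsum} collapses to
\begin{equation*}
\sum_{j=1}^N e^{-\eta(K^2/N^2-C_{N,K}^2)}e^{\eta|G_{j,j}|^2}=N e^{\eta C_{N,K}^2}.
\end{equation*}
A one-line averaging argument (the Welch bound) shows $\mu(G)^2=\max_{j\ne l}|G_{j,l}|^2\ge C_{N,K}^2$, since $\sum_{j\ne l}|G_{j,l}|^2=K-K^2/N=N(N-1)C_{N,K}^2$ (using $\sum_{j,l}|G_{j,l}|^2=K$), so the maximum of the $N(N-1)$ off-diagonal squared magnitudes is at least their average. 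Combining this with the elementary bounds $e^{\eta\mu(G)^2}\le\Phi_{\mathrm{od}}^{\eta}(G)\le N(N-1)e^{\eta\mu(G)^2}$, obtained by comparing the off-diagonal exponential sum with its largest summand, yields for equal-norm $G$
\begin{equation*}
e^{\eta\mu(G)^2}\le\Phi_{sum}^{\eta}(G)\le N^2 e^{\eta\mu(G)^2},
\end{equation*}
and hence $\lim_{\eta\to\infty}\frac{1}{\eta}\log\Phi_{sum}^{\eta}(G)=\mu(G)^2$.

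With this estimate in hand I would assemble the proof exactly as in Proposition~\ref{propodgivesgrass}. Compactness of $\M$ yields a subsequence $\{G(m_s)\}$ converging to some $G\in\M$; since each $G(m_s)$ is equal-norm and the condition $G_{j,j}=K/N$ is closed, the limit $G$ is equal-norm, i.e.\ $G\in\M\cap\Omega_{N,K}$. Let $\widehat G$ be the Gramian of a Grassmannian equal-norm Parseval frame, so $\mu(\widehat G)=\min_{G'\in\M\cap\Omega_{N,K}}\mu(G')$. Applying the squeeze to $G(m_s)$ together with continuity of $\mu$ gives $\frac{1}{\eta_{m_s}}\log\Phi_{sum}^{\eta_{m_s}}(G(m_s))\to\mu(G)^2$, while the absolute minimality of $G(m_s)$ and the upper bound applied to the equal-norm frame $\widehat G$ give
\begin{equation*}
\frac{1}{\eta_{m_s}}\log\Phi_{sum}^{\eta_{m_s}}(G(m_s))\le\frac{1}{\eta_{m_s}}\log\Phi_{sum}^{\eta_{m_s}}(\widehat G)\le\mu(\widehat G)^2+\frac{\log N^2}{\eta_{m_s}}.
\end{equation*}
Letting $s\to\infty$ yields $\mu(G)^2\le\mu(\widehat G)^2$, hence $\mu(G)\le\mu(\widehat G)$; since $G\in\M\cap\Omega_{N,K}$ and $\widehat G$ minimizes $\mu$ over that set, equality holds and $G$ is the Gramian of a Grassmannian equal-norm Parseval frame.

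The main obstacle is precisely the diagonal-term control in the key estimate: one must guarantee that the correction $Ne^{\eta C_{N,K}^2}$ does not outgrow the off-diagonal part $\Phi_{\mathrm{od}}^{\eta}$, and this is exactly where both the equal-norm hypothesis (which makes the diagonal sum equal to $Ne^{\eta C_{N,K}^2}$) and the Welch bound $\mu(G)^2\ge C_{N,K}^2$ enter. These same two features dictate that the comparison Gramian $\widehat G$ be taken equal-norm, so that its sum potential admits the clean bound $N^2 e^{\eta\mu(\widehat G)^2}$; comparing against a non-equal-norm minimizer would reintroduce the full diagonal sum with varying exponents and destroy the tropical limit.
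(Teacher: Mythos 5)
Your proof is correct and its skeleton is the same as the paper's: collapse the diagonal part of $\Phi_{sum}^{\eta}$ to $Ne^{\eta C_{N,K}^2}$ using the equal-norm hypothesis, observe via the averaging (Welch) argument that $\mu(G)^2\ge C_{N,K}^2$ so that this term never dominates, and then rerun the compactness and tropical-limit scheme of Proposition~\ref{propodgivesgrass}. The one genuine difference lies in the choice of comparison Gramian, and there your version is the more careful one. The paper's proof says to ``replace $\Phi_{od}^{\eta_m}$ with $\Phi_{sum}^{\eta_m}$ in the proof strategy'' of Proposition~\ref{propodgivesgrass}, whose comparison point $\widehat G$ is a Grassmannian Parseval frame (a minimizer of $\mu$ over all of $\M$), and it concludes that the limit is a Grassmannian Parseval frame. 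But for $N>K$ the minimum of $\mu$ over all of $\M$ is $0$, attained by diagonal rank-$K$ projections (an orthonormal basis padded with zero vectors); such a $\widehat G$ is not equal-norm, its diagonal terms contribute exponents $1-K^2/N^2+C_{N,K}^2>\mu(\widehat G)^2$ to $\Phi_{sum}^{\eta}(\widehat G)$, so the upper bound $\Phi_{sum}^{\eta}(\widehat G)\le N^2e^{\eta\mu(\widehat G)^2}$ fails there, and indeed no equal-norm limit can have $\mu=0$ when $N>K$. Your decision to compare instead against a Grassmannian \emph{equal-norm} Parseval frame $\widehat G\in\M\cap\Omega_{N,K}$, for which the two-sided estimate is valid, is exactly what makes the comparison legitimate, and it delivers precisely the stated conclusion, namely minimality of $\mu$ over $\M\cap\Omega_{N,K}$. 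The caveat in your closing paragraph identifies this pitfall explicitly; it is the point that the paper's own wording glosses over.
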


\begin{proof}
For each $m \in \{1,2,...\}$, since $G(m)$ is equal norm, the diagonal part of $\Phi_{sum}^{\eta_m} (G(m))$ simplifies so that we can write
$$
   \Phi_{\mathrm{sum}}^{\eta_m} (G(m)) = \Phi_{\mathrm{od}}^{\eta_m} (G) + 
   \sum_{j=1}^N e^{\eta_m C_{N,K}^2}  \, .
$$
Furthermore, by Parsevality, since each $G(m)$ is equal norm, there must always exist an off diagonal entry $G(m)_{a,b}$ such that $|G(m)_{a,b}|^2  \geq C_{N,K}^2$.  Hence,  $\mu(G(m))=\max\limits_{a,b \in \Zn} |G(m)_{a,b}|$ for every $m$,  which allows us to replace $\Phi_{od}^{\eta_m}$ with $\Phi_{sum}^{\eta_m}$ in the proof strategy from Proposition \ref{propodgivesgrass}, which shows that $G$ corresponds to a Grassmannian Parseval frame.  Finally, since each $G(m)$ is equal-norm, it follows that $G$ must also be equal-norm.  Therefore, $G$ is a Grassmannian equal-norm Parseval frame.
\end{proof}

If the sequence of minimizing Parseval frames has the stronger property of being equidistributed, which implies that it is equal norm,
 then the limit of the corresponding subsequence 
is equidistributed as well.

\begin{prop}\label{corequidistributedgrass}
Let $\{\eta_m\}_{m=1}^\infty$ be a positive, increasing sequence such that $\lim\limits_{m\rightarrow \infty} \eta_m= + \infty$, and suppose $\left\{G(m)=\left(G(m)_{a,b} \right)_{a,b=1}^N \right\}_{m=1}^\infty \subseteq \M$ is a sequence of equidistributed Gramians such that  $\Phi_{sum}^{\eta_m}$ achieves its absolute minimum at $G(m)$ for every $m \in \{1,2,3,...\}$, then there exists a subsequence $\left\{G(m_s)  \right\}_{s=1}^\infty$ and $G \in \M$ such that  $\lim\limits_{s\rightarrow \infty} G(m_s) = G$, where $G$ corresponds to an equidistributed Grassmannian Parseval frame.\end{prop}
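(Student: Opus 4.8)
The plan is to reduce to the previous proposition and then upgrade the limiting Gramian's equal-norm property to full equidistribution by a compactness argument on the witnessing permutations. Since any equidistributed Parseval frame is equal-norm, with all diagonal entries equal to $K/N$, each $G(m)$ is in particular the Gramian of an equal-norm frame, so the hypotheses of Proposition~\ref{equalnormetaseqgivesgrass} are satisfied. Applying that proposition, I extract a subsequence $\{G(m_s)\}_{s=1}^\infty$ converging to some $G \in \M$ which is the Gramian of a Grassmannian equal-norm Parseval frame. It therefore only remains to show that this limit $G$ is itself equidistributed.

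To take the limit of the equidistribution property, the key observation is that the permutations realizing equidistribution live in the finite symmetric group on $\Zn$. For each index $s$ and each ordered pair $(p,q) \in \Zn \times \Zn$, the equidistribution of $G(m_s)$ furnishes a permutation $\pi_s^{(p,q)}$ of $\Zn$ with $|G(m_s)_{j,p}| = |G(m_s)_{\pi_s^{(p,q)}(j),q}|$ for all $j \in \Zn$. Collecting these into a tuple indexed by the $N^2$ pairs $(p,q)$ yields, for each $s$, an element of a fixed finite set (the $N^2$-fold product of the symmetric group on $\Zn$). By the pigeonhole principle, there is a further subsequence, which I again denote $\{G(m_s)\}$, along which this entire tuple is constant; I write $\pi^{(p,q)}$ for the resulting $s$-independent permutation attached to each pair. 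Since a subsequence of the convergent sequence still converges to $G$, no generality is lost.

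Passing to the limit then finishes the argument: for each pair $(p,q)$ and each $j \in \Zn$,
$$
  |G_{j,p}| = \lim_{s\to\infty} |G(m_s)_{j,p}| = \lim_{s\to\infty} |G(m_s)_{\pi^{(p,q)}(j),q}| = |G_{\pi^{(p,q)}(j),q}| \, ,
$$
using the entrywise convergence $G(m_s) \to G$ together with continuity of the modulus. Hence $G$ is equidistributed, and combined with the conclusion of Proposition~\ref{equalnormetaseqgivesgrass} that $G$ is a Grassmannian equal-norm Parseval frame, this shows that $G$ corresponds to an equidistributed Grassmannian Parseval frame.

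The only genuinely delicate point is the interchange of the limit with the equidistribution relation: a priori the witnessing permutations $\pi_s^{(p,q)}$ could vary with $s$, so one cannot directly pass to the limit in the identity $|G(m_s)_{j,p}| = |G(m_s)_{\pi_s^{(p,q)}(j),q}|$. The finiteness of the symmetric group on $\Zn$, together with the finiteness of the index set of pairs, is exactly what makes the pigeonhole extraction work, stabilizing all the permutations simultaneously along a single subsequence so that the limit may be taken coordinatewise. Everything else is inherited from the preceding proposition.
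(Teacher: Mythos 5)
Your proof is correct, and the overall reduction (extract the convergent subsequence and the Grassmannian property from Proposition~\ref{equalnormetaseqgivesgrass}, then upgrade the limit to equidistributed) mirrors the paper; but your argument for the key step---that equidistribution survives the limit---is genuinely different. The paper uses its analytic characterization: since each $G(m)$ is equidistributed, the row sums $s^\alpha(m)=\sum_{j\in\Zn} e^{\alpha |G(m)_{a,j}|^2}$ are independent of the row index $a$, so by entrywise convergence the corresponding row sums of $G$ are independent of $a$ for every $\alpha\in(0,\infty)$; thus $G$ is $\alpha$-equipartitioned for all $\alpha$ and is equidistributed by Proposition~\ref{propequipartfonopen}. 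You instead stabilize the combinatorial witnesses: the $N^2$-tuples of permutations attached to each $G(m_s)$ range over a fixed finite set, so a pigeonhole extraction makes them constant along a further subsequence, after which the defining identities pass to the limit entrywise. Your route is more elementary and self-contained: it bypasses Proposition~\ref{propequipartfonopen} (whose converse direction rests on analytic continuation and isolating growth rates), and it actually establishes the cleaner standalone fact that the set of equidistributed Gramians is closed in $\M$; the paper's route is shorter in context because the equipartition machinery is already developed and reused elsewhere. One caveat you inherit from the paper rather than create: Proposition~\ref{equalnormetaseqgivesgrass} as stated yields that $G$ is a Grassmannian \emph{equal-norm} Parseval frame (a minimizer of $\mu$ over $\M\cap\Omega_{N,K}$), whereas the present statement asserts a Grassmannian Parseval frame (a minimizer over all of $\M$); the paper's own proof elides the same distinction, so this is not a gap attributable to your argument.
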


\begin{proof}
Since each $G(m)$ is equidistributed, we can define for each $\alpha \in (0, \infty)$ the sequence $s^\alpha(m):=\sum\limits_{j \in \Zn} e^{\a | G(m)_{a,j} |^2}$, where our definition is independent of the choice of $a \in \Zn$.  Since the entries of $\{G(m)\}_{m=1}^\infty$ converge to those of $G$, we have by continuity
$$\sum\limits_{j \in \Zn} e^{\a | G_{a,j} |^2}= \lim\limits_{m \rightarrow \infty} s^\a(m) = \sum\limits_{j \in \Zn} e^{\a | G_{b,j} |^2}$$
for all $a, b \in \Zn$.  Since this is true for every $\a \in (0, \infty)$, $G$ is equidistributed by Proposition \ref{propequipartfonopen}.
Additionally, being the limit of a sequence of minimizers for 
$\{\Phi_{sum}^{\eta_m}\}_{m \in \mathbb N}$, $G$ is also a Grassmannian
Parseval frame.
\end{proof}

If we know that if each $G(m)$ is a \jh{family-wise critical point} without vanishing entries, then we can 
characterize this limit in terms of the gradient of frame potentials.

\begin{thm}\label{thmequidistributedgrass}
Let $I,J,T$ be open intervals in $(0,\infty)$ and
let $\{\eta_m\}_{m=1}^\infty$ be a positive, increasing sequence such that $\lim\limits_{m\rightarrow \infty} \eta_m= + \infty$.
If  $\left\{G(m)=\left(G(m)_{a,b} \right)_{a,b=1}^N \right\}_{m=1}^\infty \subseteq \M$ is a sequence such that $\Phi_{sum}^{\eta_m}$  achieves its absolute minimum at $G(m)$,
each $G(m)$ contains no vanishing entries and $G$ is a
family-wise fixed point 
with respect to $\{\Phi^{\alpha,\beta,\delta,\eta_{m}}\}_{\alpha \in I, \beta \in J, \delta \in T}$, then there exists a subsequence $\left\{G(m_s)  \right\}_{s=1}^\infty$ and $G \in \M$ such that  $\lim\limits_{s\rightarrow \infty} G(m_s) = G$, 
 where $G$ is the Gram matrix of an equidistributed Grassmannian Parseval frame.\end{thm}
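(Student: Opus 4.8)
The plan is to combine two results already in hand: Corollary~\ref{cormain}, which converts the family-wise critical point hypothesis into equidistribution, and Proposition~\ref{corequidistributedgrass}, which handles the limit of a sequence of equidistributed minimizers. The whole theorem is essentially the composition of these two statements, so the proof amounts to checking that the hypotheses line up.

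First I would fix $m$ and apply Corollary~\ref{cormain} with $\eta = \eta_m$. By hypothesis, $G(m)$ contains no vanishing entries and is a family-wise critical point with respect to $\{\Phi^{\alpha,\beta,\delta,\eta_m}\}_{\alpha \in I, \beta \in J, \delta \in T}$; that is, $\nabla \Phi^{\alpha,\beta,\delta,\eta_m}(G(m)) = 0$ for all $\alpha \in I$, $\beta \in J$, and $\delta \in T$. These are precisely the two hypotheses of Corollary~\ref{cormain}, so that corollary yields that each $G(m)$ is equidistributed.

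Second, having upgraded every term of the sequence to being equidistributed, I would invoke Proposition~\ref{corequidistributedgrass} directly. Its hypotheses are now all met: the sequence $\{\eta_m\}_{m=1}^\infty$ is positive, increasing, and tends to $+\infty$; each $G(m) \in \M$ is equidistributed; and $\Phi_{sum}^{\eta_m}$ attains its absolute minimum at $G(m)$ for every $m$. Proposition~\ref{corequidistributedgrass} then supplies a subsequence $\{G(m_s)\}_{s=1}^\infty$ and a matrix $G \in \M$ with $\lim_{s \to \infty} G(m_s) = G$, where $G$ is the Gram matrix of an equidistributed Grassmannian Parseval frame, which is exactly the claimed conclusion.

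I do not expect a genuine obstacle here, since the mathematical content lives entirely inside the two cited results and the proof is simply their concatenation. The one point that warrants care is that the family-wise critical point condition on $G(m)$ is stated relative to the matching parameter $\eta_m$, so Corollary~\ref{cormain} must be applied with this same value of $\eta$ for each $m$; because that corollary is valid for arbitrary $\eta > 0$, this presents no difficulty. It is worth noting explicitly that the no-vanishing-entries assumption is what allows us to bypass the sublevel-set condition of Theorem~\ref{thmmain} and feed $G(m)$ straight into Corollary~\ref{cormain}.
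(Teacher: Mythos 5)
Your proof is correct and follows essentially the same route as the paper's: apply Corollary~\ref{cormain} with $\eta=\eta_m$ to conclude that each $G(m)$ is equidistributed, then pass to a convergent subsequence of the minimizers of $\Phi_{sum}^{\eta_m}$. The only difference is cosmetic: the paper first invokes Proposition~\ref{equalnormetaseqgivesgrass} to obtain a Grassmannian equal-norm Parseval limit and then uses Proposition~\ref{corequidistributedgrass} only to upgrade the limit to equidistributed, whereas you observe that the statement of Proposition~\ref{corequidistributedgrass} alone already delivers the full conclusion.
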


\begin{proof}
By Corollary \ref{cormain}, it follows that each $G(m)$ is equidistributed and hence equal norm.  Therefore, by Lemma \ref{equalnormetaseqgivesgrass} there exists a subsequence $\left\{G(m_s)  \right\}_{s=1}^\infty$ and $G \in \M$ such that  $\lim\limits_{s\rightarrow \infty} G(m_s) = G$, where $G$ corresponds to a Grassmannian equal-norm Parseval frame.  Finally, since every $G(m_s)$ is equidistributed, it follows by Proposition~\ref{corequidistributedgrass} that $G$ is also  equidistributed.
\end{proof}

The existence of equiangular Parseval frames for certain pairs of $N$ and $K$ provides an abundance of examples for which this theorem holds;  however, due to our current inability to verify when a non-equiangular critical point of $\Umain$ is at an absolute minimum, we are unable to state outright that non-equiangular, equidistributed frames exist which satisfy the conditions of Theorem \ref{thmequidistributedgrass}.  Based on numerical experiments, it is our conjecture that 
Example~\ref{ex46hankel} is an absolute minimizer of $\Umain$ for all $\eta \in (0,\infty)$ and therefore corresponds to a Grassmannian equal-norm Parseval frame
which is equidistributed.  

In addition, we know that the conclusion of the preceding theorem
can hold even if $G$ contains vanishing entries, as provided
  in examples of family-wise critical points
given by the equidistributed Grassmannian equal-norm Parseval frames in Examples~\ref{Ex:GrassmannianSemicirc} and
\ref{Ex:GrassmannianMUB}.

\section*{Acknowledgments}

Both authors
would like to express thanks for the hospitality of the American Institute of Mathematics
where the outline of this work took shape, and would like to thank Dustin Mixon for drawing
attention to the intriguing interplay between Grassmannian and equidistributed frames.
B. G. B. acknowledges the great hospitality of Gitta Kutyniok and her group at
the Technische Universit{\"a}t Berlin where this paper was completed.

\begin{appendix}

\section{ $\M$ as a real analytic manifold}\label{app:Mrealanalytic}

\begin{thm}\label{thmMisanalytic}
The manifold $\M$ is a real analytic submanifold of the (linear) manifold
of matrices ${\mathbb F}^{N \times N}$. The dimension of $\M$ is $K(N-K)$
if $\mathbb F=\mathbb R$ and $2K(N-K)$ if $\mathbb F = \mathbb C$.
\end{thm}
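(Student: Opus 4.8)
The plan is to exhibit explicit real-analytic charts realizing $\M$, which by the discussion preceding the definition is precisely the set of rank-$K$ orthogonal projections on $\mathbb{F}^N$, as the Grassmannian of $K$-dimensional subspaces. The first reduction uses invariance: for any $U \in U(N)$ (resp.\ $O(N)$), the conjugation $G \mapsto UGU^*$ is a real-analytic diffeomorphism of $\mathbb{F}^{N\times N}$ carrying $\M$ onto itself. Hence it suffices to build one chart in a relative neighborhood of the base point $P_0 = \bigl(\begin{smallmatrix} I_K & 0 \\ 0 & 0\end{smallmatrix}\bigr)$, and then, after diagonalizing an arbitrary $G_0 \in \M$ by a suitable unitary (resp.\ orthogonal) $U$, transport that chart by conjugation to a neighborhood of $G_0$.

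For the chart near $P_0$ I would use the graph parametrization. Given $L \in \mathbb{F}^{K\times(N-K)}$, set $M_L = \bigl(\begin{smallmatrix} I_K \\ L\end{smallmatrix}\bigr)$ and define
$$\Psi(L) = M_L (M_L^* M_L)^{-1} M_L^*, \qquad M_L^* M_L = I_K + L^* L,$$
the orthogonal projection onto the column space of $M_L$. Since $I_K + L^*L$ is positive definite, its inverse, and hence $\Psi(L)$, depends real-analytically on the entries of $L$, and $\Psi(L) \in \M$ with $\Psi(0) = P_0$. In block form one computes $\Psi(L) = \bigl(\begin{smallmatrix} Q & QL^* \\ LQ & LQL^*\end{smallmatrix}\bigr)$, where $Q = (I_K + L^*L)^{-1}$.

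The decisive step is to invert $\Psi$ and identify its image with a relatively open subset of $\M$. Writing $G \in \M$ in blocks $G = \bigl(\begin{smallmatrix} A & B \\ B^* & D\end{smallmatrix}\bigr)$, the top-left block $A$ stays invertible for $G$ near $P_0$ (at $P_0$ it equals $I_K$), so on this relatively open set I would set $\Phi(G) = B^* A^{-1}$, which is real-analytic in the entries of $G$. A direct block computation gives $\Phi \circ \Psi = \mathrm{id}$. For $\Psi \circ \Phi = \mathrm{id}$ one uses the defining equations: with $A$ invertible, the range of $G$ is the column space of $\bigl(\begin{smallmatrix} A \\ B^*\end{smallmatrix}\bigr) A^{-1} = M_L$ for $L = B^*A^{-1}$, and because $G$ is the \emph{orthogonal} projection onto its own range, $G = M_L(M_L^* M_L)^{-1} M_L^* = \Psi(L)$. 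Thus $\Psi$ is a real-analytic homeomorphism onto $\{G \in \M : A \text{ invertible}\}$ with real-analytic inverse $\Phi$; differentiating $\Phi \circ \Psi = \mathrm{id}$ shows $D\Psi$ is injective, so $\Psi$ is a real-analytic embedding.

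Finally, the domain $\mathbb{F}^{K\times(N-K)}$ has real dimension $K(N-K)$ when $\mathbb{F}=\mathbb{R}$ and $2K(N-K)$ when $\mathbb{F}=\mathbb{C}$ (each complex entry contributing two real dimensions), giving the stated dimension; transition maps are automatically real-analytic, being restrictions of real-analytic maps on ambient open sets. The step I expect to demand the most care is the identity $\Psi \circ \Phi = \mathrm{id}$, that is, confirming every nearby rank-$K$ projection is exactly the graph projection $\Psi(B^*A^{-1})$: this is precisely where the full force of $G = G^2 = G^*$ enters, since it is $G^2 = G$ together with self-adjointness that forces $G$ to be the orthogonal projection onto its range rather than some oblique idempotent.
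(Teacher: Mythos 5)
Your proof is correct, and although it is built on the same underlying coordinates as the paper's proof---your base chart $\Phi(G)=B^*A^{-1}$ is exactly the adjoint of the paper's chart $\phi_J(G)=A^{-1}B$ for $J=\{1,\dots,K\}$, since $A=A^*$---the way you assemble the atlas and prove analyticity of the inverse is genuinely different. The paper constructs finitely many charts, one for each $K$-element index set $J$ whose corresponding rows of $G$ are linearly independent, so it needs no diagonalization; you instead fix the single base point $P_0=\bigl(\begin{smallmatrix} I_K & 0\\ 0 & 0\end{smallmatrix}\bigr)$ and transport one chart by unitary (resp.\ orthogonal) conjugation, which costs an appeal to the spectral theorem to write $G_0=UP_0U^*$ but then covers $\M$ by symmetry. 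The more substantive difference is in the inverse chart: the paper re-orthogonalizes the rows of $A=\tilde{\phi}_J(G)$ by forming $B=Q^{-1/2}A$ with $Q=AA^*=(G^{J,J})^{-1}$, and must then justify that the matrix square root $Q^{-1/2}$ is analytic via a convergent power-series expansion; your reconstruction $\Psi(L)=M_L(I_K+L^*L)^{-1}M_L^*$ is manifestly rational in the real coordinates of $L$, so the square-root argument is avoided entirely---a cleaner and more elementary step, and in fact the paper's final identity $G=G^{N,J}(G^{J,J})^{-1}G^{J,N}$ is the same rational formula in disguise. You also explicitly verify the embedding conditions (injectivity of $D\Psi$ and homeomorphism onto a relatively open subset of $\M$), which the paper leaves implicit, and your observation that $G=G^2=G^*$ is what forces a nearby rank-$K$ projection to coincide with the graph projection $\Psi(B^*A^{-1})$ is exactly the right crux. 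One trivial slip: for $M_L=\bigl(\begin{smallmatrix} I_K\\ L\end{smallmatrix}\bigr)$ to parse, $L$ must lie in $\mathbb{F}^{(N-K)\times K}$ rather than $\mathbb{F}^{K\times(N-K)}$ (equivalently, take adjoints throughout); this affects neither the argument nor the dimension count.
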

\begin{proof}
As before, we define $\M$ as the set of $N \times N$ orthogonal projections with rank $K$.  Given any $G_0 \in \M$, we can find a subset of indices, $J = \{ n_1, n_2, ..., n_K \} \subset \Zn$ of size $|J|=K$ such that
the rows of $G$ indexed by $J$ are linearly independent. By the orthogonality of $G$, removing
the rows and columns  corresponding to the indices in $\Zn \setminus J$ from $G_0$ then yields the Gramian $(G_0)^{J,J}$ of the row vectors indexed by $J$,
which is invertible since the rows are linearly independent.  By continuity of the determinant in the entries of a matrix,  
there exists $ \epsilon >0$ such that  for any $G \in \M \cap B(G_0; \epsilon)$  the $K \times K$ submatrix $G^{J,J}$  consisting of the rows from $G$ indexed by $J$
is invertible.  Now, for $G \in B(G_0; \epsilon) \cap \M$, consider the map
$$
    \tilde{\phi}_J: B(G_0; \epsilon) \cap \M \rightarrow {\mathbb F}^{N\times K} (\C) , G \mapsto  (G^{J,J})^{-1} G^{J, N} \, .
$$
Noting that $\tilde{\phi}_J (G)$ contains a $K \times K$ identity submatrix, we define the chart $\phi_J(G)$ to be 
the  $K \times (N-K)$ matrix given by $\tilde{\phi}_J (G) = (I_K \, \phi_J(G))$, thereby defining what will be our local coordinates in $\mathbb F^{K\times (N-K)}$.  Then $\tilde{\phi}_J$ is analytic, since the inverse of $G^{J,J}$ is rational in its entries; hence, $\phi_J$ is also analytic, since there is no loss of analyticity in the removal of entries.  

 To see that $\phi_J$ has an analytic inverse, we show that we can reconstruct $G$ from $\phi(G)$ in an analytic fashion.  First, we reinsert the $K \times K$ identity block in a way that corresponds to $J$ so that we have recovered the $K \times N$ matrix $A:= \tilde{\phi}_J (G) = (G^{J,J})^{-1} G^{J, N}$, as above.  
 Next, we form the $K \times K$ Gram matrix $Q = A A^* = (G^{J,J})^{-1} G^{J, N}  (G^{J, N})^* ((G^{J,J})^{-1})^*$.  Since $G_{J,N}$ was extracted from an orthogonal projection,  $G^{J,N} (G^{J,N})^* = G^{J,J}$, so that $Q= (G^{J,J})^{-1}$ is analytic in the coordinates. Next, we orthogonalize
 the rows of $A$ to obtain  $B : = Q^{-1/2} A=(G^{J,J})^{1/2}A$. The negative square root of $Q$ is seen to be analytic in $Q$ via
 a convergent power series expansion of $(cI-(cI-Q))^{-1/2}$ in terms of the powers of $cI-Q$, where $c>\|Q\|$. The rows of $B$ then provide an orthonormal basis
 with the same span as the rows of $A$ and $B B^*=I$.
 Thus, $B$ is the synthesis operator of a Parseval frame with the Gram matrix 
$$B^* B = ((Q^{-\frac{1}{2}} A) )^* Q^{-\frac{1}{2}} A = G^{N,J} (G^{J,J})^{-1} G^{J,N} = G.$$
 We see that the entries of $G$ are analytic 
in the coordinates if there is $c>0$ such that the power series expansion of $(cI-(cI-Q))^{-1/2}$ converges, so  $\phi_J^{-1}$ is  analytic on the range $\phi_J(B(G_0; \epsilon))$.

Combining the analyticity of the charts and of their inverses, we conclude that 
 $\M$ is a real analytic manifold because $\phi_J\circ \phi_L^{-1}$ is analytic on the image of the intersection of the domains of $\phi_J$ and $\phi_L$
 for any subsets $J$ and $L$ of size $|J|=|L|=K$. The dimension of $\M$ is
 the real dimension of $\mathbb F^{K\times (N-K)}$, which is
 $K(N-K)$
if $\mathbb F=\mathbb R$ and $2K(N-K)$ if $\mathbb F = \mathbb C$.
 \end{proof}

\end{appendix}

\providecommand{\href}[2]{#2}

\end{document}